\DeclareMathOperator*{\holim}{holim} 
\DeclareMathOperator*{\hocolim}{hocolim} 
\DeclareMathOperator{\Spec}{Spec}
\DeclareMathOperator{\Hom}{Hom}
\DeclareMathOperator{\uHom}{R\underline{Hom}} 
\DeclareMathOperator{\Aut}{Aut}
\DeclareMathOperator{\End}{End}
\DeclareMathOperator{\uEnd}{R\underline{End}}
\begin{document}

\markright{A derived period map}
\bibliographystyle{hsiam}

\setcounter{tocdepth}{2} 
\setlength{\parindent} {0pt}
\setlength{\parskip}{1ex plus 0.5ex}

\newcommand{\cat}[1]{\mathscr{#1}} 
\newcommand{\mcat}[1]{$\mathscr{#1}$} 
\newcommand{\hocat}[1]{Ho\mathscr{#1}} 
\newcommand{\ob}{\textrm{ob }}
\newcommand{\mor}{\textrm{mor }}
\newcommand{\id}{\mathbf 1} 

\newcommand{\sSet}{\mathbf{sSet}}
\newcommand{\Alg}{\mathbf{Alg}}
\newcommand{\sPr}{\mathbf{sPr}}
\newcommand{\dgCat}{\mathbf{dgCat}} 
\newcommand{\Vect}{\mathbf{Vect}} 
\newcommand{\Ch}{\mathbf{Ch}} 
\newcommand{\Chp}{\mathbf{Ch}_{pe}} 
\newcommand{\Cat}{\mathbf{Cat}} 
\newcommand{\sCat}{\mathbf{sCat}}
\newcommand{\qCat}{\mathbf{qCat}}
\newcommand{\stn}{\mathbf{Stn}}
\newcommand{\staCat}{\mathbf{Cat}^{Ex}_\oo}

\newcommand{\Sym}{\textrm{Sym}}

\newcommand{\mods}{\textrm{-Mod}}
\newcommand{\Map}{\textrm{Map}}
\newcommand{\Tor}{\textrm{Tor}}
\newcommand{\Ext}{\textrm{Ext}}
\newcommand{\Kos}{\textrm{Kos}} 

\newcommand{\Sing}{\textrm{Sing}} 

\newcommand{\G}{\mathbb{G}} 

\newcommand{\margin}[1]{\marginpar{\footnotesize #1}}

\newcommand{\set}[1]{\mathbb{#1}}
\newcommand{\Q}{\mathbb{Q}}
\newcommand{\C}{\mathbb{C}}
\newcommand{\Z}{\mathbb{Z}}
\newcommand{\R}{\mathbb{R}}

\newcommand{\PR}{\mathbb{P}}
\newcommand{\OO}{\mathscr{O}} 
\newcommand{\MM}{\mathscr{M}} 
\newcommand{\A}{\mathscr{A}} 
\newcommand{\B}{\mathscr{B}} 

\newcommand{\De}{\Delta}
\newcommand{\Ga}{\Gamma}
\newcommand{\Om}{\Omega}
\newcommand{\ep}{\epsilon}
\newcommand{\de}{\delta}
\newcommand{\la}{\lambda}
\newcommand{\al}{\alpha}
\newcommand{\om}{\omega}

\renewcommand{\to}{\rightarrow}
\newcommand{\ra}{\rightarrow}
\newcommand{\we}{\tilde \ra} 
\newcommand{\cof}{\hookrightarrow}
\newcommand{\fib}{\twoheadrightarrow}
\newcommand{\acof}{\tilde \hookrightarrow}
\newcommand{\afib}{\tilde \twoheadrightarrow}

\newcommand{\oo}{\infty}

\newcommand{\op}{^{\textrm{op}}} 

\newcommand{\Bold}{\boldsymbol}

\theoremstyle{plain}
\newtheorem{thm}{Theorem}[section]
\newtheorem{cor}[thm]{Corollary}
\newtheorem{lemma}[thm]{Lemma}
\newtheorem{propn}[thm]{Proposition}
\newtheorem{conj}{Conjecture}
\newtheorem{claim}{Claim}

\makeatletter
\newtheorem*{rep@theorem}{\rep@title}
\newcommand{\newreptheorem}[2]{
\newenvironment{rep#1}[1]{
 \def\rep@title{#2 \ref{##1}}
 \begin{rep@theorem}}
 {\end{rep@theorem}}}
\makeatother

\newtheorem{theorem}{Theorem}
\newreptheorem{theorem}{Theorem}

\theoremstyle{definition}
\newtheorem*{defn}{Definition}
\newtheorem*{altdef}{Alternative Definition}
\newtheorem{eg}{Example}
\newtheorem*{conv}{Convention}
\newtheorem{fact}{Construction}
\newtheorem*{qn}{Question}

\theoremstyle{remark}
\newtheorem{rk}[thm]{Remark}

\newtheorem{ork}{Temporary Remark}

\def \frk{\color{gray}\begin{ork}}
\def \endfrk{\end{ork}\color{black}}

\NewEnviron{killcontents}{}

\title{The global derived period map}
\author{Carmelo Di Natale}
\address{Carmelo Di Natale, King's College London}
\email{carmelo.di\_natale@kcl.ac.uk}
\author{Julian V. S. Holstein}
\thanks{The second author was supported by EPSRC grant EP/N015452/1 for part of this work}
\address{Julian V. S. Holstein, St John's College, University of Cambridge}
\email{julianvsholstein@gmail.com}
\keywords{Derived algebraic geometry, derived analytic geometry, moduli, Hodge structures}
\subjclass{14A20, 32G20}
 
\maketitle

\begin{abstract}
We develop the global period map in the context of derived geometry, generalising Griffiths' classical period map as well as the infinitesimal derived period map. We begin by constructing the derived period domain which classifies Hodge filtrations and enhances the classical period domain. We analyze the monodromy action. Then we associate to any polarized smooth projective map of derived stacks a 
canonical morphism of derived analytic stacks from the base into the quotient of the derived period domain by monodromy. We conclude the paper by discussing a few examples and a derived Torelli problem. 
In the appendix we describe how to present derived analytic Artin stacks as hypergroupoids, which may be of independent interest. 
\end{abstract}

\tableofcontents{}

\section{Introduction}

We construct a global derived period map, generalising Griffiths' period map \cite{Griffiths70} and the infinitesimal derived period map of deformation theories \cite{Fiorenza08, Fiorenza09, diNatale14}.

We hope that this map will be a useful tool in studying derived moduli of varieties.

We work over $\C$ throughout this article. Given a polarized smooth projective map $f: X \to S$ between derived Artin stacks we construct a map $\cat P$ from the analytification $S^{an}$ of the base to a quotient of a derived analytic stack $U$ which we call the \emph{derived period domain}. The underived truncation 
 of $\cat P$ induces a product of the usual period maps on coarse moduli spaces; in particular a closed point $s \in S$ is sent to the Hodge filtration on the fibre $X_{s}$. Moreover, $\cat P$ extends the infinitesimal period map.

Our first result constructs the target of the derived period map:

\begin{reptheorem}{thm-perioddomain}
There is a derived period domain $U$ which is a geometric derived analytic Artin stack that extends the classical period domain. 
\end{reptheorem}

We define $U$ as an open subspace of the analytification of a derived stack $D_{n}(V,Q)$ which classifies filtrations of a complex $V$ that satisfy the Hodge-Riemann bilinear relations with respect to a shifted bilinear form $Q$. (We use analytification rather than a direct construction in analytic stacks because the theory of derived analytic stacks is not yet as developed as that for derived algebraic stacks.) The main ingredients are Lurie's derived analytification functor with its universal property \cite{Lurie11e, Lurie11d}, Porta's further analysis of this functor including its extension to derived Artin stacks \cite{Porta15a}, 
and some explicit constructions of stacks building on work of To\"en and Vezzosi \cite{Toen05a}.

In order to define maps into this stack we use the universal property for derived analytification of affine schemes together with a presentation of derived (analytic) Artin stacks as simplicial derived affine schemes, resp.\ derived Stein spaces. This is closely related to the theory presenting stacks as hypergroupoids that was developed by Pridham \cite{Pridham09a}, and that we extend to analytic stacks in the appendix.

We then take the quotient of $U$ by an arithmetic group $\Ga$ containing the fundamental group of the underlying topological space of $S^{an}$.
We use Deligne's work on formality \cite{Deligne68} to show that we need not look at the action of the full simplicial loop group of the base but only the fundamental group. 

We can now state our main theorem, which is proved using local computations in derived algebraic geometry, a derived trace map, classical Hodge theory and some homotopy theory: 
\begin{reptheorem}{mainA}
Let  $f: X \to S$ be a any polarized smooth projective map between derived Artin stacks where $S$ is connected and of finite presentation. Then there is a derived period map $\cat P: S^{an}\to U/\Ga$ of derived analytic stacks. 
\end{reptheorem}
It then follows from the construction that 
$\cat P$ enhances the classical period map and the infinitesimal derived period map. 

\begin{rk} 
The study of generalised period maps goes back to work in the infinitesimal setting motivated by mathematical physics. 
In order to study mirror symmetry for a Calabi-Yau manifold $X$ Barannikov and Kontsevich considered the extended formal moduli space $\cat M$ associated to the dg Lie algebra $\cat A^{0,*}(X, \wedge^{*} \cat T_{X})$. They construct a \emph{generalized period map} $\cat M \to \oplus_{k} H^{k}(X, \C)[\dim X-k]$ and use it to give $H^*(X, \wedge^{*}\cat T_{X})$ the structure of a formal Frobenius algebra which is related to the Gromov-Witten invariants of the mirror partner of $X$ \cite{Barannikov98,Barannikov00}. See also \cite{Barannikov99}.
\end{rk}

\subsection{Outline}

We begin by recalling in Section \ref{sect-background} some basic notions of derived 
algebraic and derived analytic geometry, in particular the derived analytification functor connecting them, which was defined by Lurie and further described by Porta. 

In Section \ref{sect-perdomain} we construct the derived period domain $U$. After recalling the classical period domain in Section \ref{sect-classicaldomain} we review the derived flag variety in Section \ref{sect-alggrass}. This derived stack classifies filtrations of a complex $V$. For us $V$ will be the cohomology of a fibre of a smooth projective morphism of derived stacks. In Section \ref{sect-algdomain} we build a geometric derived stack $D_n(V,Q)$ classifying filtrations on a complex $V$ equipped with a bilinear form $Q$, satisfying the Hodge-Riemann orthogonality relation. 
This is a closure of the derived period domain.
We compute the tangent space in Section \ref{sect-tangent}.
In Section \ref{sect-anadomain} we construct the derived period domain $U$ itself using the analytification of $D_{n}(V,Q)$ and the characterization of open derived substacks in terms of open substacks of the underlying underived stack. We later use the universal property of analytification to construct maps into this space.
We verify $U$ is a derived enhancement of the usual period domain.

In Section \ref{sect-monodromy} we recall Deligne's result that the derived pushforward of the constant sheaf for a family of smooth projective varieties is formal. We deduce there is no higher monodromy acting on cohomology of the fibre. We then construct the quotient of the derived period domain by the action induced by the monodromy action on $V$.

In Section \ref{sect-map} we construct the derived period map. The heart of the matter is in Sections \ref{sect-localperiod-1} and \ref{sect-localperiod-2}, where we construct the derived period map locally on contractible derived Stein spaces. As in the underived case we push forward the relative cotangent complex; the stupid filtration gives the Hodge filtration and we obtain the polarization by extending the trace pairing to derived geometry.  Using the universal property of analytification we construct a map to the derived period domain. 
In Section \ref{sect-globalperiod} we glue the local period maps on a simplicial resolutions of $S$ and the construction is complete. The universal example is given by the moduli stack of smooth polarized schemes. 
We check that our map is an enhancement of the usual period map in Section \ref{sect-comparison}. In Section \ref{sect-derivative} we compute the differential and show that the derived period map extends the infinitesimal derived period map.
We briefly talk about examples in Section \ref{sect-examples}.

In Appendix \ref{appendix} we develop the theory of hypergroupoids in derived Stein spaces as a model for derived analytic Artin stacks. 

\subsection{Notation and conventions}
We work over the field of complex numbers $\set C$ throughout. Tensor products are understood over $\C$, respectively the constant sheaf $\underline {\set C}$, unless otherwise indicated.

We will use the following notation: $\sSet$ is the category of simplicial sets with the classical (Quillen) model structure; $\sCat$ is the category of categories enriched over simplicial sets; $\dgCat$ is the category of differential graded categories;
$\mathbf{sAlg}$ is the category of simplicial commutative algebras (over $\C$). 

We write $\Map(-,-)$ for the mapping space in a simplicial category or in a model category, in a dg category  $\Hom(-,-)$ denotes the dg enriched hom space. If there is an underlying model category structure we write $\uHom(-,-)$ for the derived hom complex. We use cohomological grading throughout.

Given a derived scheme or derived analytic space $X$ over $\C$ we denote by $t(X)$ the underlying topological space. For a (derived) scheme $V$ this is $V(\C)$ with the classical topology.

A constant simplicial object is denoted by $c$, i.e. $(cA)_{n}=A$ for all $n$. 
For a derived Stein space $T$ we let $uT$ denote the derived affine scheme $R\Spec(\cat O^{alg}(T))$.
We write $*$ for the point, i.e.\ the derived scheme $\Spec(\C)$ and its analytification.

\subsection{Acknowledgements}

We would like to thank Jon Pridham and Ian Grojnowski for introducing the first, respectively second, author to this problem, and for many helpful discussions.
We are grateful to Mauro Porta for generous explanations of his work on derived analytic geometry, particularly for providing the proof of Lemma \ref{lem-gaga}, and for many insightful discussions about this paper. In particular he pointed out a crucial mistake in the first version of this paper. Discussions of different possible solutions led to the joint work \cite{HolsteinF}.
We would like to thank Barbara Fantechi, Benjamin Hennion, Daniel Huybrechts, Peter J\o rgensen and Bertrand To\"en for useful conversations. We also thank Serguei Barannikov for pointing out interesting references and the anonymous referees for many very helpful comments and corrections.

\section{Derived geometry} \label{sect-background}

\subsection{Derived algebraic geometry}

We will be assuming some familiarity with derived algebraic geometry, which is algebraic geometry that is locally modelled on the model category of simplicial commutative algebras instead of the category of commutative algebras. 

There is a vast literature on the subject, developed by To\"en-Vezzosi, Lurie and Pridham among others. For an introduction see \cite{Toen05, Toen14}.

Here we just mention a few reminders and conventions.

We will be studying certain \emph{derived stacks}. Just like a scheme can be represented by a set-valued sheaf on commutative algebras a derived stack can be represented by a simplicial-set-valued hypersheaf on simplicial commutative algebras. (We always take stack to mean higher stack.) In fact derived stacks can be described by a model structure on presheaves of simplicial sets on simplicial commutative algebras. 
A representable derived stack is also called a derived affine stack, and the image of a simplicial algebra $A$ in derived stacks is denoted $R\Spec(A)$. 

We will call a derived stack \emph{Artin} or simply \emph{geometric} if there exist certain smooth covers. Here we use geometric to mean $k$-geometric for some $k$ rather than 1-geometric.
The precise definition is inductive on $k$, beginning with representable stacks, which are $(-1)$-geometric. See for example Definition 1.3.3.1 in \cite{Toen05a}.
Replacing smooth by \'etale covers one obtains the definition of a \emph{Deligne-Mumford stack} (we will call them DM stacks for short). A derived stack that is a union of geometric ones is called \emph{locally geometric}. 
It is often interesting and consequential to show that certain derived stacks arising as moduli functors, say, are geometric.

Derived stacks come with simplicial algebras of functions $\cat O$ and a representable 
map $f: X \to Y$ between derived stacks is called \emph{strong} if $\pi_{i}(\cat O_{X}) \simeq \pi_{i}(f^{-1}\cat O_{Y})\otimes_{\pi_{0}(f^{-1}\cat O_{Y})} \pi_{0}(\cat O_{X})$ for $i > 0$.

One fact we will use frequently is that $k$-geometric derived stacks are stable under homotopy pullbacks in derived stacks, Corollary 1.3.3.5 in \cite{Toen05a}.

As we work over $\C$ the normalization functor $N$ from simplicial commutative algebras to commutative dg-algebras concentrated in non-positive degrees is a Quillen equivalence and we will freely switch between the two viewpoints.

One can restrict a derived stack to an underived stack along $\mathbf{Alg} \to \mathbf{sAlg}$. We denote this operation by $\pi^{0}$ (it is often written $t_{0}$ in the literature).

On the other hand one can truncate a stack to a functor into sets, by applying $\pi_{0}$ object-wise. We denote this functor by $\pi_{0}$. This construction has the universal property of the coarse moduli space of the stack.

We will often consider the double truncation $\pi_{0}\pi^{0}$ and if $\pi_{0}\pi^{0}(\cat X) = X$ for some scheme $X$ we will say the derived stack $\cat X$ is an \emph{enhancement} of $X$. (Here we deviate a little from the literature, where one typically considers enhancements of higher stacks, i.e.\ a derived stack $\cat X$ is an enhancement of the higher stack $X$ if $\pi^{0}(\cat X) = X$.)

A derived affine schemes is \emph{almost of finite presentation} 
if the functor it represents commutes with filtered (homotopy) colimits. 
A derived stack is \emph{locally almost of finite presentation} if it has a cover consisting of derived affine schemes almost of finite presentation. 
It follows from Theorem 7.4.3.18 of \cite{Lurie11}
that a derived stack $X$ with perfect cotangent complex and such that $\pi^{0}(X)$ is finitely presented is locally almost of finite presentation.

We will be considering derived stacks taking values in simplicial sets. In moduli questions these often come from functors valued in $(\oo,1)$-categories, and several of our constructions are clearest in terms of categories rather than simplicial sets. Our preferred models for $(\oo,1)$-categories are dg-categories and simplicial categories. We use the right Quillen functors $DK: \dgCat \to \sCat$ obtained by composing truncation, the Dold-Kan construction and the forgetful functor (see \cite{Tabuada10}), to move from dg-categories to simplicial categories. (We will sometimes use the same name for associated sheaves of dg-categories and simplicial categories).
To associate a simplicial set we take the maximal subgroupoid $Int(\cat C)$ and then use
the functor
$\bar W: \sCat \to \sSet$ defined in Definition 1.6 of \cite{Pridham10}. It is weakly equivalent to taking the diagonal of the nerve. The composition of $\bar W \circ Int \circ DK$ is also weakly equivalent to the maximal subgroupoid of the dg nerve in Construction 1.3.1.6 of \cite{Lurie11}.  
The reader is welcome to think of either of these functors instead.
Abusing notation slightly we write $N_{W}$ for both $\bar W \circ Int$ and $\bar W\circ Int \circ DK$. 

$DK$ preserves homotopy limits and $\bar W$ preserves homotopy pullbacks of simplicial categories which are homotopy groupoids, see Proposition 1.8 in \cite{Pridham10}.

\begin{rk}
To reassure the reader that the construction $\bar W$ is the natural one we can note that if we begin with a simplicial model category $\cat M$ we can also consider its classifying space, which is just the nerve of the subcategory of weak equivalences. It follows from Corollary 1.10 in \cite{Pridham10} and Proposition 8.7 in \cite{Rezk01} that this simplicial set is weakly equivalent to $\bar W \cat M$.
\end{rk}

\begin{rk}
The reader should be advised that there are different definitions of (geometric) derived stacks. We use results by To\"en-Vezzosi and Lurie respectively, which a priori live in different frameworks. The framework we have used above is To\"en-Vezzosi's, where we understand a derived stack as its functor of points.

We will also need to refer to Lurie's approach using structured topoi. A description can be found in the next section.
These are shown to give equivalent models for Deligne-Mumford stacks in \cite{Porta15b}. It is worth noting that there is no direct approach to Artin stacks using structured topoi.

The most concrete approach to derived algebraic geometry is in terms of hypergroupoids. This theory is developed by Pridham in \cite{Pridham09a}, where the equivalence with To\"en-Vezzosi's definitions of Artin stacks is established.

This approach is based on noting that all geometric stacks can be presented as simplicial affine schemes satisfying some technical condition. It may also be extended to the analytic setting, see Appendix \ref{appendix}.

There is also some care needed as there are differences in terminology, cf. Remark 2.12 in \cite{Pridham10}, but the reader can safely ignore these differences unless she or he cares about the precise value of $k$ for which a geometric stack is $k$-geometric.
\end{rk}

\subsection{Derived analytic geometry following Lurie and Porta}\label{sect-dan}

Since the period map is a priori holomorphic and not algebraic we have to construct its derived enhancement in the setting of derived analytic geometry. The theory of derived analytic geometry is still being developed by Lurie, Porta, and Ben-Bassat and Kremnizer. 

The main difficulty is that while derived algebraic geometry is modelled on simplicial algebras, derived analytic geometry should be modelled on simplicial analytic algebras, but it is not obvious what those should be. 
One approach, using Ind-Banach algebras, is developed in \cite{Ben-Bassat13}. 

The approach we will use is Lurie's theory of structured spaces, see \cite{Lurie11e} and Sections 11 and 12 of \cite{Lurie11d}, in particular as extended by Porta in his work around derived GAGA \cite{Porta15, Porta15a}. Porta and Yu have also worked out derived analytic deformation theory \cite{Porta16}.

We will use this work largely as a black box, providing us with a good theory of derived analytic DM stacks. In the following we will give some vague explanations while giving references for precise definitions and results. We recommend that the interested reader turn to the introduction of \cite{Porta15a} for further explanations.

The crucial object is a category $\cat Top(\cat T_{an})$ of $\cat T_{an}$-structured topoi. We can think of an object as an $\oo$-topos  $\cat X$ together with a sheaf of simplicial commutative rings $\cat O_{\cat X}^{alg}$ and some extra structure. (In the first approximation the reader may think of $\cat X$ as a topological space and ignore the extra structure on $\cat O_{X}$.) 

\begin{rk}
Derived analytic spaces have more structure than simply a sheaf of simplicial commutative algebras because one needs to keep track of the action of holomorphic functions on subsets of $\C^{n}$ by post-composition. 
In the differentiable setting Spivak's definition of simplicial $\cat C^{\oo}$-rings is motivated by this issue, see \cite{Spivak10}.
\end{rk}

The objects of interest to us form the subcategory $dAn_{\C}$ of derived analytic spaces, see Definition 1.3 in \cite{Porta15a}. They are the enhancements of complex analytic spaces. They also contain a subcategory of derived Stein spaces, which we will denote by $dStein$. (It is called $Stn^{der}_{\C}$ in loc.\ cit.)

Together with its analytic topology and the collection of smooth morphisms $dStein$ forms a geometric context in the sense of Porta-Yu \cite{Porta14}. That means that to define \emph{derived analytic Artin stacks} one can consider the category of simplicial sheaves on $dStein$ and use the notion of geometric stacks in the sense of \cite{Porta14}, see \cite{Porta15a}.

Similarly, on the algebraic side, there is a category $\cat Top(\cat T_{\acute et})$ of $\cat T_{\acute et}$-structured topoi, and this category contains a full subcategory of geometric derived stacks equivalent to DM stacks in Toën-Vezzosi's framework.

There is a forgetful functor $(-)^{alg}: \cat Top(\cat T_{an}) \to \cat Top(\cat T_{\acute et})$ which corresponds to forgetting extra structure and considering just the algebraic object $(\cat X, \cat O_{\cat X}^{alg})$.

Then the the following hold: 

\begin{itemize}
\item There is an analytification functor $(-)^{an}$ from $\cat T_{\acute et}$-structured topoi to $\cat T_{an}$-structured topoi, which is right adjoint to the forgetful functor $(-)^{alg}$. This is an adjunction of functors of $\oo$-categories.
See Theorem 2.1.1 in \cite{Lurie11e}.
\item The analytification functor sends DM stacks locally almost of finite presentation to derived analytic spaces, see Remark 12.26 in \cite{Lurie11d}.
\item The analytification functor can be extended to derived Artin stacks, see \cite{Porta15} and also Section 3.1 of \cite{HolsteinF}. 
It follows from Lemma 2.35 of \cite{Porta14} applied to the derived context that analytification sends geometric derived algebraic stacks locally almost of finite presentation to geometric derived analytic stacks. 
\item Analytification restricts to the usual analytification on the subcategory of underived schemes, see Lemma 4.4 in \cite{Porta15}.
\item The natural comparison map $h: (X^{an})^{alg} \to X$ is flat, see Corollary 5.15 in \cite{Porta15}. 
\item Analytification commutes with the truncations $\pi^{0}$ and $\pi_{0}$, see Section 9.2 in \cite{Porta15} and Lemma 2.20 of \cite{Porta14}. 
\end{itemize} 

It is worth adding some explanation about analytification of Artin stacks. The analytification functor as defined by Lurie only applies to DM stacks as it depends on the notion of a structured topos. It can be turned into a functor on Artin stacks as a left Kan extension, see \cite{Porta15}. But as a left Kan extension the new functor does not have an automatic left adjoint. (This was pointed out to the authors by Mauro Porta.)

\begin{rk} The analytification of Artin stacks is only given by a left Kan extension when we consider derived stacks on simplicial algebras locally almost of finite presentation. This suffices for our purposes, but the theory extends to arbitrary derived stacks, see Section 3.1 of \cite{HolsteinF}.
\end{rk}
Many moduli stacks naturally occur as Artin stacks. This is true in particular for the derived period domain we study here. 

In joint work with Mauro Porta \cite{HolsteinF}
the second author examines mapping spaces into analytified Artin stacks. Despite the lack of a left adjoint there are algebraic descriptions of these mapping spaces. 

However, for the purposes of this paper we will sidestep this issue by constructing maps into analytified mapping spaces directly using simplicial presentations. 
 \margin{2.5}

Next we need to recall some characterizations of sheaves on derived analytic stacks.

Definition 6.3 in \cite{Porta15} defines a category of coherent sheaves on a derived analytic space $X$: First one takes the $\oo$-topos $\cat X$ associated to $X$ and observes there is a category of $\cat O_{\cat X}^{alg}$-modules. The category of \emph{coherent sheaves} is the subcategory of modules $\cat F$ whose cohomology sheaves are locally on $\cat X$ coherent sheaves of $\pi_{0}(\cat O_{\cat X}^{alg})$-modules. This is entirely analogous to the algebraic case.

Let us unravel this definition. First we make the definition of a derived analytic space a little more precise. Recall the $\oo$-topos of a topological space $T$ can be thought of as the $(\oo,1)$-category of sheaves of spaces on $T$. A model for its hypercompletion is given by the model category of simplicial presheaves on $T$ (cf. Remark 6.5.0.1 in \cite{Lurie11a}). But a derived analytic space is always hypercomplete, see Lemma 3.1 of \cite{Porta15}. So locally we are just working with the local projective model category structure on simplicial presheaves 
on a topological space. The space is then equipped with a sheaf of simplicial rings $\cat O_{X}^{alg}$. (Plus the extra structure that does not affect these definitions).

Many of the derived analytic spaces in this paper will actually have an underlying topological space, so this description applies globally. (This is also true in the algebraic setting for derived schemes.) There will be a brief discussion about sheaves on stacks in Section \ref{sect-alggrass}.

In general, $\cat O_{\cat X}$-modules are $\cat O_{\cat X}$-module objects in sheaves on an $\oo$-topos $\cat X$, cf. Section 2.1 of \cite{Lurie11d}. 
As we are working over $\C$ we can model this by the model category of sheaves of chain complexes over $N(\cat O_{X}^{alg})$ (using the normalization functor $N$).

There are cohomology groups defined in this setting, which just come down to the usual cohomology groups, see Definition 7.2.2.14 in \cite{Lurie11a}. There is a $t$-structure on $Coh(X)$ and the heart satisfies $Coh^{\heartsuit}(X) \simeq Coh^{\heartsuit}(\pi^{0}(X))$. 

\begin{rk}
This definition of coherent sheaves may look naive, but Porta in \cite{Porta15a}
shows that $\cat O^{alg}_{X}$-modules
are equivalent to the more natural category $Sp(Str^{loc}_{\cat T_{an}}(X)_{/\cat O_{X}})$. 
\end{rk}

The cohomology groups can also be used to define bounded below complexes. Note however that $\cat O_{X}^{alg}$ is not necessarily bounded below. 
As in the underived case there is an analytification functor on sheaves, provided by pullback along the natural map $h_{X}: (X^{an})^{alg} \to X$, which sends bounded below coherent sheaves to bounded below coherent sheaves, see \cite{Porta15}.

\section{The derived period domain}\label{sect-perdomain}

\subsection{The classical period domain}\label{sect-classicaldomain}
In this section we construct the target of the period map, the (polarized) derived period domain . The derived period domain is a derived analytic stack enhancing the usual period domain. It  will classify filtrations of a complex $V$ equipped with a bilinear form $Q$ which satisfy the Hodge-Riemann bilinear relations.

For background we will recall the construction of the period domain as a subspace of a flag variety in the underived case. (Here the flag variety is the moduli of (partial) flags with prescribed dimensions in a vector space $V$. This variety may be described as the quotient $SL(V)/P$ for some parabolic subgroup $P$.) 

Assume we are given a smooth projective family of varieties $X \to S$ with fibre $X_{s}$. The polarized period domain is the moduli of Hodge filtrations on $H^{k}(X_{s}, \C)$, it is a subspace of the flag variety of $H^{k}$.
To be precise we are given a vector space $H^{k} = H^{k}(X_{s}, \set \Z)\otimes_{\Z} \C$ with an integral structure and a bilinear form  $Q_{k}(\al,\beta) = \langle L^{n-k}\al,\beta\rangle$ where $L$ is the Lefschetz operator and $\langle,\rangle$ is the intersection form $\langle\alpha,\beta\rangle = \int \alpha \cup \beta$. Note $Q_{k}$ is bilinear, symmetric in even degrees and anti-symmetric in odd degrees. 
These vector spaces are identified for all $s$ by Ehresmann's theorem as they are diffeomorphism invariants. 
\begin{rk} $Q_{k}$ should not be confused with the hermitian form $Q_{k}(\alpha, \bar \beta)$.
\end{rk}

Now the polarized period domain is an open subset of a closed subset of the flag variety, 
 given by the conditions below, which for ease of reference we call the Hodge-Riemann bilinear relations. (This is a slight abuse of language, more correctly the term is only applied to the first and third condition.)

\begin{enumerate}
\item $F^{p}H^{k} = (F^{k-p+1}H^{k})^{\perp}$ with respect to $Q_{k}$.
\item $H^{k}=F^{p}H^{k}\oplus \overline{F^{k-p+1}H^{k}}$.
\item On $H^{p,q}_{prim} = F^{p}H^{k}_{prim}\cap \overline{F^{q}H^{k}_{prim}}$ we have $(-1)^{\frac{k(k-1)}2}i^{p-q} Q_{k}(\al,\bar \al)> 0$. Here $q = k-p$ and $H_{prim}$ is the primitive cohomology.
\end{enumerate}
The signature of $Q_{k}$ on the non-primitive parts of cohomology is easily worked out.
\begin{rk}
When dealing with all cohomology groups at the same it is convenient to consider the bilinear pairing $Q(\alpha,\beta) = \int \alpha \cup \beta$ instead and we will do so in future sections. The first condition become $Q(F^{p}, F^{n-p+1})=0$. (The second and third condition will still be treated level-wise.) 
\end{rk}

The first condition describes an algebraic subset in the flag variety.
We call the closed subset given by condition (1) only the closure of the  period domain. 

The other two conditions are (analytic) open on the filtrations satisfying the first condition.

\begin{rk}
One can view the period domain in two ways: As an open subspace of a closed subspace of a flag, as above, or as a homogeneous space for the group of symmetries, roughly $\Aut(H_{\set R}, Q)/\Aut_{F}(H_{\set R}, Q) \subset \Aut(H, Q)/\Aut_{F}(H,Q)$.   Note that this is a quotient of real groups, which is then shown to be complex as an open subspace of a quotient of complex groups. 

This approach seems harder to imitate in the derived setting, which is why our starting point is the flag variety.
\end{rk}

Note that the tangent space of the period domain is given by $\End/F^{0}(\End)$, where we write $\End$ for the Lie algebra of endomorphisms of $H^{k}$ compatible with the pairing, filtered in the usual way, i.e. $F^{0}(\End)$ consists of filtered endomorphisms. See Section 1 of \cite{Cattani78}. 
The image of the period map has tangent space contained in the \emph{horizontal tangent space}, given by $F^{-1}(\End)/F^{0}(\End)$, as a consequence of Griffiths transversality.

To construct the period map for a family $X \to S$ one has to take the quotient of the period domain by the monodromy group $\pi_{1}(S)$.
Note that $\pi_{1}(S)$ is a subgroup of $\Aut(H_{\Z},Q)$. It acts on integral cohomology, hence the $\Z$-coefficients, and it preserves the bilinear form.  

The fundamental group acts properly discontinuously on the period domain.

\subsection{The derived flag variety}\label{sect-alggrass}

We will now start constructing the derived period domain. 
We will build it step by step starting from moduli stacks of perfect complexes and filtered complexes.

For simplicity we will define all our stacks on the model category of non-positively graded commutative dg-algebras. We write $A$ for an arbitrary commutative dg-algebra in non-positive degrees. We will consider its model category of dg-modules with the projective model structure (all objects are fibrant). 

Before we start with the constructions let us first recall the following useful fact which we will use repeatedly: 
\begin{lemma}\label{lemma-open}
Suppose we are given an (analytic or algebraic) geometric derived stack $\mathcal X$. Then for any open substack $U$ of $\pi^{0}(\mathcal X)$ there is a unique geometric derived stack $\mathcal U$ with $\pi^{0}(\mathcal U) = U$ that is an open substack of $\mathcal X$.
\end{lemma}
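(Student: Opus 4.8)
The plan is to reduce the statement to a known fact about \emph{open immersions} of derived stacks. Recall that an open immersion (or open substack) of a derived geometric stack $\mathcal{X}$ is, by definition, a monomorphism $j\colon \mathcal{U}\hookrightarrow \mathcal{X}$ which is (formally) \'etale and whose truncation $\pi^{0}(j)$ is an open immersion of the underived stacks; equivalently it is a flat monomorphism that is an open immersion after truncation. The content of the Lemma is then that the assignment $\mathcal{U}\mapsto \pi^{0}(\mathcal{U})$ sets up an equivalence between open substacks of $\mathcal{X}$ and open substacks of $\pi^{0}(\mathcal{X})$. So the proof splits into two parts: \emph{existence} of $\mathcal{U}$ given $U\subset \pi^{0}(\mathcal{X})$, and \emph{uniqueness} (together with the fact that $\pi^{0}(\mathcal{U})=U$).

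First I would treat the affine/representable case, which is the base of the induction. If $\mathcal{X}=R\Spec A$, then $\pi^{0}(\mathcal{X})=\Spec \pi_0(A)$, and an open subscheme $U$ is a union of basic opens $\Spec \pi_0(A)[1/\bar f_i]$. Lifting each $\bar f_i\in\pi_0(A)$ to $f_i\in A_0$ (or $A$) and taking $R\Spec A[1/f_i]$, one glues these along their (automatically open, and compatible) overlaps to obtain $\mathcal{U}$; since localization is flat and commutes with $\pi_0$, one has $\pi^{0}(\mathcal{U})=U$ and $\mathcal{U}\hookrightarrow\mathcal{X}$ is an open immersion. Uniqueness in this case follows because a flat monomorphism of derived affines inducing a given open immersion on $\pi_0$ is determined by that data. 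Next I would bootstrap to the geometric case by induction on the geometric level $k$, using a smooth atlas $p\colon \coprod_i R\Spec A_i \to \mathcal{X}$: pull $U$ back to an open $U_i\subset \pi^{0}(R\Spec A_i)$, apply the affine case to get $\mathcal{U}_i$, check (using flatness and the fact that $\pi^{0}$ preserves the relevant fibre products and that smooth morphisms stay smooth) that the $\mathcal{U}_i$ glue via the groupoid relations coming from $\coprod R\Spec A_i\times_{\mathcal X}\coprod R\Spec A_j$, and define $\mathcal{U}$ as the resulting geometric stack. The earlier-recalled stability of $k$-geometric stacks under homotopy pullback (Corollary 1.3.3.5 of \cite{Toen05a}) guarantees $\mathcal{U}$ is again geometric, and one inherits $\pi^{0}(\mathcal{U})=U$ from the affine case together with $\pi^{0}$ commuting with homotopy colimits and pullbacks along smooth maps. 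The same argument works verbatim in the analytic setting, replacing $R\Spec$ by derived Stein spaces and using that $dStein$ with smooth morphisms is a geometric context (Section 8 of \cite{Porta15a}); basic opens of a Stein space are again Stein.

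For \emph{uniqueness}, suppose $\mathcal{U}$ and $\mathcal{U}'$ are two open substacks of $\mathcal{X}$ with $\pi^{0}(\mathcal{U})=\pi^{0}(\mathcal{U}')=U$. Since open immersions are monomorphisms and formally \'etale, the key point is that for a formally \'etale map the derived structure is determined by the truncation: more precisely, an open immersion $\mathcal{U}\hookrightarrow \mathcal{X}$ is flat, so $\mathcal{O}_{\mathcal{U}}\simeq \mathcal{O}_{\mathcal{X}}\otimes_{\pi_0\mathcal{O}_{\mathcal{X}}}\pi_0\mathcal{O}_{\mathcal{U}}$ locally, which shows $\mathcal{U}$ is recovered from $\mathcal{X}$ and $U$. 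One then checks the identity $\pi^{0}(\mathcal{U})=\pi^{0}(\mathcal{U}')$ lifts to a canonical equivalence $\mathcal{U}\simeq\mathcal{U}'$ over $\mathcal{X}$ by descending along the atlas, where it reduces to the affine uniqueness statement above.

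I expect the main obstacle to be the bookkeeping in the gluing step: verifying that the locally constructed $\mathcal{U}_i$ assemble into a genuine geometric stack requires checking the simplicial (hyper)groupoid relations are respected, and care is needed to see that the open subobjects pulled back along the two projections from $R\Spec A_i\times_{\mathcal X} R\Spec A_j$ genuinely agree — this uses flatness of the atlas maps and that $\pi^{0}$ is compatible with these pullbacks. Everything else (the affine case, stability under pullback, $\pi^{0}$ commuting with the relevant colimits) is either routine or already available in the cited literature, and the analytic case is formally parallel.
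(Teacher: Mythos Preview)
Your argument is correct, but the paper's proof is much shorter and takes a different organizational route. Rather than building $\mathcal{U}$ by hand via induction on the geometric level and gluing along an atlas, the paper simply cites Proposition 2.1 in \cite{Schurg11} (algebraic case) and Proposition 3.16 in \cite{Porta15} (analytic case), and isolates the conceptual kernel: the truncation functor $\pi^{0}$ induces an \emph{equivalence of \'etale sites} $X_{\textrm{\'et}} \simeq (\pi^{0}X)_{\textrm{\'et}}$, and open immersions are precisely the \'etale maps which are monomorphisms. This single statement gives existence and uniqueness simultaneously, with no need to track groupoid relations or verify gluing data by hand. Your approach has the advantage of being self-contained and making the affine mechanism (localization lifts along $\pi_{0}A \to A$) completely explicit, which is pedagogically useful; the paper's approach buys economy and makes clear that nothing special about \emph{open} immersions is needed beyond their characterization inside the \'etale site. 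The bookkeeping you flag as the main obstacle in your gluing step is exactly what the site-equivalence formulation absorbs.
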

\begin{proof}
See Proposition 2.1 in \cite{Schurg11} for the algebraic case. The analytic case follows from Lemma 7.18 in \cite{Porta16}.

The key point is that $\pi^{0}$ induces an equivalence of sites between $X_{\textrm{\'et}}$ and $(\pi^{0}X)_{\textrm{\'et}}$, i.e.\ between \'etale maps to $X$ and to $\pi^{0}(X)$. 
Open immersions are just \'etale maps which are monic.
\end{proof}

Hence open conditions can be dealt with easily by imposing open conditions on the underlying underived space. 

Now we recall our two main building blocks, the stacks of perfect complexes and of filtered complexes, which are locally geometric derived stacks locally almost of finite presentation.  

We begin with a description of quasi-coherent sheaves on derived stacks. Generally speaking, when we have a description of the $\oo$-category $\cat {QC}oh(U)$ of complexes of quasi-coherent sheaves for any derived affine scheme $U$, the most natural global definition is that $\cat {QC}oh(X) = \holim_{i} \cat {QC}oh(U_{i})$ where $U_{i}$ is a simplicial derived affine scheme whose realization is the derived stack $X$. As $\cat {QC}oh$  is a stack (see for example \cite{Toen05a}) this is well-defined. 

It will be useful later on to have a very explicit model in which sheaves on a stack can be modelled by a (subcategory of a) model category. This can for example be achieved as follows.
Given a cover $U_{i}$ as above consider $QCoh(U_{i})$ to be the dg model category of complexes of quasi-coherent sheaves on $U_{i}$, such that its subcategory of fibrant cofibrant objects $QCoh^{cf}$ is a model for $\cat{QC}oh$. Any $U_{i} \to U_{j}$ induces a Quillen adjunction. These data form a \emph{left Quillen presheaf} $i \mapsto QCoh(U_{i})$, see Appendix B of \cite{Toen05a}.

We define a category such that an object $G \in QCoh(S)$ is given by the following data: For every $i$ there is $G_{i} \in QCoh(U_{i})$ and for every map $\sigma_{ij}: U_{i} \to U_{j}$ there is a comparison map $\phi_{ij}: G_{i} \to \sigma_{ij}^{*}G_{j}$. Morphisms are given by strictly compatibly families of maps.  This is the category of \emph{presections}  with the injective model structure where cofibrations and weak equivalences are defined level-wise.
Then we consider those objects $G_{\bullet}$ such that all $\phi_{ij}$ are weak equivalences. These are called the \emph{homotopy cartesian sections} and we consider the category of homotopy cartesian sections which are moreover fibrant and cofibrant. This dg category is a model for $\cat{QC}oh(S)$. 

This result is called \emph{strictification}, it seems to be part of the folklore. An early reference is \cite{Simpson01}, see also Appendix B of \cite{Toen05a}. A detailed and accessible proof under some finiteness assumptions can be found in \cite{Spitzweck10}. These finiteness assumptions are satisfied if $S$ is of finite presentation, cf.\ the proof of Theorem \ref{mainA}.

As pointed out by an anonymous referee, a general strictification result can be obtained from the results in \cite{Lurie11a}. We give a sketch of their argument. The key ideas are as follows: The homotopy limit of the diagram $i \mapsto QCoh(U_{i})$ can be computed as a quasi-categorical limit.
The $\oo$-functor $R\Spec(A) \mapsto A\mods$ from  derived affine schemes over $S$ considered as an $\oo$-category to the $\oo$-category of chain complexes can be turned into a cartesian fibration $p: \mathbf{Mod} \to (dAff/S)^{op}$ of $\oo$-categories by the yoga of straightening/unstraightening. Then unravelling definitions the total space $\mathbf{Mod}$ of the fibration has a presentation coming from a model category.
The limit of this diagram may be written as a category of cartesian sections in the $\oo$-categorical sense by Corollary 3.3.3.2 of \cite{Lurie11a}. 
Now the $\oo$-category of all sections can be strictified using Proposition 4.2.4.4 of \cite{Lurie11a}, and cartesian sections are a left Bousfield localization of the $\oo$-category of all sections.

\begin{defn}
Let $Perf$, the moduli stack of perfect complexes, be defined as follows. 

For any derived stack $S$ consider a simplicial derived affine hypercover $U_{\bullet}$ and let $\cat Perf(S)$ be the category of fibrant cofibrant homotopy cartesian sections $G_{\bullet}$ of the left Quillen presheaf $QCoh$ such that each $G_{i}$ is perfect as a $\cat O(U_{i})$-module. 
We say that an $A$-module $M$ over a dg-algebra $A$ is perfect if it is compact in the derived category of $A$. This is equivalent to saying $M$ is pseudo-coherent (also called almost perfect) and of finite Tor dimension. 
The last condition is equivalent (see \cite{Toen07}) to saying $M\otimes^L_{A}H^{0}(A)$ is a perfect complex of $H^{0}(A)$-modules, i.e.\ is
quasi-isomorphic to a bounded complex of finite projective $H^{0}(A)$-modules.

In particular $\cat Perf$ sends $\Spec(A)$ to 
the dg-category of cofibrant $A$-modules quasi-isomorphic to strictly perfect dg modules over $A$. 
Then $Perf(S) \coloneqq  N_{W}(\cat Perf(S))$. (In the literature this is sometimes denoted $\set RPerf$.) Recall that when taking $N_{W}$ we restrict the morphisms to allow only the ones that become isomorphisms in the homotopy category. 

For a proof that this is a locally geometric derived Artin stack locally almost of finite presentation, see \cite{diNatale14a}. Note that this stack is equivalent to the construction of $\cat M_{\id}$ in \cite{Toen07}. 
\end{defn}

\begin{defn}
Let $Filt_{n}$ be the stack of filtered perfect complexes of filtration length $n+1$. 

Explicitly, we let $\cat Filt^{mod}_{n}(A)$ be the 
 dg model category
of sequences $F^{n} \to \dots \to F^{1}\to F^{0}$ of injections of $A$-modules as defined in \cite{diNatale14a}. In particular, weak equivalences are given by filtered quasi-isomorphisms, i.e.\ morphisms inducing quasi-isomorphisms on the associated graded modules.
$\cat Filt_{n}(A)$ is the dg subcategory of fibrant cofibrant sequences $F^{\bullet}$ such that all $F^{i}$ are perfect. 

\begin{rk}
For later examples it is reassuring to note that it follows from Propositions 1.19 and 1.21 in \cite{diNatale14a} that any bounded filtered complexes over $\C$ is fibrant and cofibrant.
\end{rk} 
$\Hom_{\cat Filt_{n}(A)}(F^{*},G^{*})$ consists of compatible maps $F^{i} \to G^{i}$. These maps are determined on $F^{0}$, and 
we will sometimes write the derived hom complex as $\uHom_{F}(F^{0},G^{0})$ if the filtrations are clear. 

We note that the associated simplicial category may also be defined by Dwyer-Kan localising the category of (not necessarily fibrant and cofibrant) filtrations at filtered quasi-isomorphisms.

We will sometimes represent points of $\cat Filt_{n}$ by filtrations which are not fibrant or cofibrant, but will always consider the derived hom space $\uHom_{F}$. 

 Analogously to $\cat Perf(S)$ we can define the dg category $\cat Filt_{n}(S)$ as consisting of certain fibrant cofibrant homotopy cartesian sections of the left Quillen presheaf $U_{i} \mapsto \cat Filt^{mod}_{n}(\cat O(U_{i}))$.
 
 We then let $Filt_{n}$ be $N_{W}(\cat Filt_{n})$. That this is a locally geometric derived stack locally almost of finite presentation can be deduced from Theorem 2.33 and Remark 2.34 in \cite{diNatale14a}, where it is shown that $\cup_{n} Filt_{n}$ and $Filt_{1}$ are locally geometry. (Note that our subscript $n$ has a different meaning from the superscript $n$ in $\cat Filt^{n}$ in loc.\ cit.) 
 \end{defn}
 \begin{rk}
We note that the category $\cat Filt_{n}(A)$ as constructed here and in \cite{diNatale14a} is not stable.
The stable version is given by strings of $n$ composable arrows in $Perf(A)$ without the injectivity condition. 
These two constructions give rather different categories, but it follows from the proof of Theorem 4.4 of \cite{Gwilliam16} that the associated homotopy categories and mapping spaces agree, and thus the two constructions give rise to the same stack.
\end{rk}

 The next stack also appears in \cite{diNatale14a}, but we will revisit the construction to give an explicit description of the mapping spaces.
 \begin{lemma}
For  a complex $V$ consider $\cat Flag'_{n}(V)(A) = \holim(\cat Filt_{n} (A)\rightrightarrows \cat Perf(A))$ where the two maps are induced by the forgetful functor $\pi: F^{*} \mapsto F^{0}$ and the constant functor $V: F^{*} \mapsto V$. This is the dg-category of pairs $(F^{*}, w_{F})$ where $F^{*}$ is an object of $\cat Filt_{n}(A)$ and $w_{F}: F^{0} \to V\otimes A$ is a homotopy equivalence in $\cat Perf(A)$. 

The morphism complex is given by 
\[
\uHom((F^{*},w_{F}), (G^{*},w_{G})) = (\uHom_{F}(F^{0},G^{0}) \oplus \uHom(F^{0},A[1]), \Delta)\]
where $\Delta: (f, h) \mapsto (-df, dh + w_{G}\circ f - w_{F}\circ \delta)$ where $\delta$ is the identity if $f$ has degree $0$ and $0$ otherwise. 
\end{lemma}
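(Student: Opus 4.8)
The plan is to compute the homotopy limit of the diagram $\cat Filt_n(A) \rightrightarrows \cat Perf(A)$ explicitly as a dg-category, which amounts to two things: identifying the objects (a homotopy equaliser of the two functors, evaluated on objects), and identifying the morphism complexes (a homotopy pullback of chain complexes, hence a homotopy fiber, which we can model by a mapping cone). For the objects, the homotopy limit of a pair of functors between dg-categories has as objects pairs consisting of an object $F^*$ of $\cat Filt_n(A)$ together with a homotopy between its two images in $\cat Perf(A)$; since one functor sends $F^*$ to $F^0$ and the other to the constant $V \otimes A$, a homotopy between them in the sense used in $\cat Perf(A)$ (where only homotopy equivalences are kept as morphisms) is exactly a homotopy equivalence $w_F \colon F^0 \to V \otimes A$. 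So I would first spell out that the objects are pairs $(F^*, w_F)$ as claimed.

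For the morphism complexes, the key point is that for a homotopy limit of a cosimplicial-type diagram of dg-categories (here just a parallel pair, so the $\holim$ is computed by a cotensor/path-object construction), the morphism complex between $(F^*, w_F)$ and $(G^*, w_G)$ sits in a homotopy pullback square of chain complexes whose corners are $\uHom_F(F^0, G^0)$, the morphism complex in $\cat Filt_n(A)$, mapping to $\uHom_{\cat Perf}(F^0, G^0)$ along the forgetful functor $\pi$, and the path object of $\uHom_{\cat Perf}(F^0, V\otimes A) \simeq \uHom(F^0, A) \otimes V^\vee$-ish data encoding compatibility with $w_F, w_G$. Concretely, a morphism in the homotopy limit is a pair: a filtered map $f \colon F^0 \to G^0$ together with a homotopy $h$ witnessing that $f$ is compatible with the chosen trivialisations, i.e.\ $w_G \circ \pi(f) \simeq w_F$ up to the boundary of $h$. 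Writing the mapping cone of $\pi \colon \uHom_F(F^0,G^0) \to \uHom(F^0,G^0) \xrightarrow{w_G \circ -} \uHom(F^0, V\otimes A)$, shifted appropriately, gives precisely the complex $(\uHom_F(F^0,G^0) \oplus \uHom(F^0, A[1]), \Delta)$ with differential $\Delta(f,h) = (df, dh + w_G \circ f - w_F)$; here I am using that $\uHom(F^0, V\otimes A) \simeq \uHom(F^0, A)$ after fixing the identification coming from $w_F$, so the "constant" term $- w_F$ appears as the failure of $h$ to be a strict homotopy. I would check the sign conventions and that $\Delta^2 = 0$ (using $d w_F = d w_G = 0$ since they are chain maps), which is the routine calculation.

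The main obstacle I anticipate is bookkeeping rather than conceptual: pinning down exactly which model of $\holim$ for dg-categories is being used and verifying that with that model the morphism complex really is the cone as written, including getting the degree shift $A[1]$ and the sign in $w_G \circ f - w_F$ correct. One has to be careful that $\cat Perf(A)$ has had its morphisms restricted to homotopy equivalences, so the relevant mapping spaces in $\cat Perf(A)$ that enter the homotopy limit are unions of contractible components (path spaces of the space of equivalences), and one must argue that this does not change the morphism complex of the homotopy limit — only the objects — because at the level of a fixed pair of objects the inclusion of the equivalences is, on the relevant components, a weak equivalence onto the whole mapping complex relevant for the homotopy fiber over the chosen basepoint $w_F$. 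Once that is settled, the description of objects and the cone computation of morphisms assemble to the stated formula, and functoriality of the construction in $(F^*,w_F)$ is automatic from the naturality of the homotopy limit.
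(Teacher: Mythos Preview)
Your approach is essentially the paper's, and your own diagnosis of the obstacle is exactly the point where the paper adds content: rather than arguing heuristically about what the homotopy limit of a pair of dg-functors ``should'' look like, the paper constructs an explicit path-object replacement $\cat Filt_n^{\sim}$ of $\cat Filt_n$ (objects are triples $(F^*, M_F, m_F: F^0 \to M_F)$ with $m_F$ a homotopy equivalence, and hom-complexes the evident twisted direct sum with differential $(f,g,h)\mapsto(df,dg,dh+g\circ m_F - m_G\circ f)$), checks that the inclusion $\cat Filt_n \hookrightarrow \cat Filt_n^{\sim}$ is a quasi-equivalence and that the resulting map $\cat Filt_n^{\sim} \to \cat Perf$ sending $(F^*,M_F,m_F)\mapsto M_F$ is a fibration in the Dwyer--Kan model structure (these are the ``standard arguments'' from the path-space construction in $\dgCat$, cf.\ Tabuada), and then invokes right properness of $\dgCat$ to compute the homotopy limit as the strict fibre over $V$. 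That strict fibre is manifestly the category you wrote down, with no residual ambiguity about models, shifts, or signs.

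So your plan is correct, but the missing ingredient that turns the sketch into a proof is precisely the fibrant replacement $\cat Filt_n^{\sim}$; once you have it, your worries about ``which model of $\holim$'' and about the restriction to homotopy equivalences evaporate, since you are simply reading off objects and hom-complexes of an ordinary pullback of dg-categories. Your mapping-cone picture for the morphism complex is then exactly what falls out when you set $M_F = M_G = V\otimes A$ and $g = \id$ in the hom-complex of $\cat Filt_n^{\sim}$.
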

\begin{proof}
To show that the characterization of the homotopy limit is correct we replace $\cat Filt_{n}$ by $\cat Filt_{n}^{\sim}$, which has objects consisting of triples $(F^{*}, M_{F}, m_{F})$ where $F^{*}$ is an object in $\cat Filt_{n}$ and $m_{F}: F^{0} \to M_{F}$ is a homotopy equivalence in $\cat Perf$. Morphism complexes are given by $(\uHom_{\cat Filt}(F^{0}, G^{0}) \oplus \uHom(M_{F},M_{G}) \oplus \uHom(F^{0}, M_{G})[1], \Delta)$ where $\Delta: (f,g,h) \mapsto (-df,dg,dh  + m_{G}\circ f- g\circ m_{F})$.

The natural inclusion is a quasi-equivalence and the map $\cat Filt^{\sim}_{n} \to \cat Perf$ is a fibration. This follows from the standard arguments used in the construction of the path space in dg-categories, see Section 3 of \cite{Tabuada10a}.

As dg-categories (with the Dwyer-Kan model structure) are right proper we can compute the homotopy limit as the limit of $\cat Filt_{n}^{~} \to \cat Perf \leftarrow *$, which is the category described in the Lemma.
\end{proof}

\begin{lemma}\label{lemma-dflag}
The derived flag variety $DFlag_{n}(V)$ is defined as $N_{W}$ of the substack  $\cat Flag_{n}(V)'$ that is given by filtrations such that the maps $H^{k}(F^{i-1}\otimes^L_{A}
H^{0}(A)) \to  H^{k}(F^{i}\otimes^L_{A}H^{0}(A))$ are injections of flat $H^{0}(A)$-modules. It is geometric.
\end{lemma}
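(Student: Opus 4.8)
The plan is to exhibit $DFlag_n(V)$ as a locally geometric derived stack by presenting it as an iterated homotopy pullback of locally geometric stacks and then cutting out a suitable open substack via Lemma \ref{lemma-open}. First I would observe that $\cat Flag'_n(V)$ — hence $Flag'_n(V) := N_W(\cat Flag'_n(V))$ — is a homotopy pullback of $Filt_n$ and the point $*$ over $Perf$ along the maps induced by $\pi$ and by the constant functor at $V$; this is exactly the content of the preceding lemma, whose proof already identifies the homotopy limit of $\cat Filt_n \rightrightarrows \cat Perf$ with the explicit dg-category described there. Since $Filt_n$, $Perf$ and $*$ are all locally geometric and locally of finite presentation (as recalled in the two bulleted items above), and since $k$-geometric derived stacks are stable under homotopy pullbacks (Corollary 1.3.3.5 of \cite{Toen05a}, quoted in Section \ref{sect-background}), it follows that $Flag'_n(V)$ is locally geometric and locally of finite presentation. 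One must be slightly careful here that $N_W$ sends the homotopy pullback of dg-categories to the homotopy pullback of the associated simplicial sets: since the categories in question are homotopy groupoids (all morphisms are invertible in the homotopy category by construction), this is covered by the stated fact that $\bar W$ preserves homotopy pullbacks of homotopy groupoids (Proposition 1.8 of \cite{Pridham10}).

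Next I would pass from $Flag'_n(V)$ to $DFlag_n(V)$ by imposing the stated condition that the maps $H^k(F^{i-1}\otimes_A H^0(A)) \to H^k(F^i\otimes_A H^0(A))$ be injections of flat $H^0(A)$-modules. The key point is that this is an open condition: it depends only on the underlying underived stack $\pi^0(Flag'_n(V))$, because it is phrased entirely in terms of $H^0(A)$ and the base change $F^\bullet \otimes_A H^0(A)$, and over a fixed underived base the locus where a map of perfect complexes induces, on each cohomology sheaf, an injection with flat cokernel (equivalently, where the cone has the expected cohomological amplitude and the relevant sheaves are flat) is open by semicontinuity of fibre dimensions of cohomology together with the openness of the flat locus. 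Having checked that the corresponding substack $U \subset \pi^0(Flag'_n(V))$ is open, Lemma \ref{lemma-open} produces a unique open derived substack $\cat Flag_n(V)' \hookrightarrow Flag'_n(V)$ with $\pi^0$ equal to $U$; open substacks of (locally) geometric derived stacks are again (locally) geometric, so $DFlag_n(V) := N_W(\cat Flag_n(V)')$ is geometric, and it inherits local finite presentation from $Flag'_n(V)$ since its cotangent complex is the restriction of a perfect one and its truncation is locally of finite presentation (using the criterion from Theorem 8.4.3.18 of \cite{Lurie11} recalled above).

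I expect the main obstacle to be the precise verification that the flatness-and-injectivity condition defining $DFlag_n(V)$ is genuinely open, i.e.\ that it is detected on $\pi^0$ and that the resulting subfunctor of $\pi^0(Flag'_n(V))$ is represented by an open substack. This requires a careful cohomology-and-base-change argument: one must show that over an affine underived test scheme, being ``injective with flat cokernel on each $H^k$'' is a constructible, in fact open, condition, which is standard for a single perfect complex but needs to be organized along the whole filtration $F^n \to \cdots \to F^0$ simultaneously and compatibly with the identification $F^0 \simeq V\otimes A$. The remaining steps — the homotopy-pullback presentation, stability of geometricity under pullback, and the passage through $N_W$ — are essentially formal given the facts already assembled in Sections \ref{sect-background} and \ref{sect-alggrass} and the cited results of Di Natale \cite{diNatale14a}, Pridham \cite{Pridham10}, and To\"en--Vezzosi \cite{Toen05a}.
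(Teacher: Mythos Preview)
Your argument correctly establishes that $DFlag_n(V)$ is \emph{locally} geometric, but there is a genuine gap when you conclude that it is \emph{geometric}. Recall the distinction made in Section~\ref{sect-background}: a derived stack is geometric if it is $k$-geometric for some fixed $k$, whereas locally geometric only means it is a union of such. The stacks $Filt_n$ and $Perf$ are only \emph{locally} geometric, so their homotopy pullback is a priori only locally geometric; Corollary 1.3.3.5 of \cite{Toen05a} preserves $k$-geometricity but does not by itself bound $k$. Passing to an open substack does not help: an open substack of a locally geometric stack is again only locally geometric. Your sentence ``open substacks of (locally) geometric derived stacks are again (locally) geometric, so $DFlag_n(V)$ is geometric'' therefore conflates the two notions.

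The paper's proof contains exactly the missing ingredient. It fixes $k$ to be one more than the amplitude of $V$ and introduces the $k$-geometric substack $Filt_n^k \subset Filt_n$ (respectively $Perf^k \subset Perf$) defined by the vanishing of negative filtered self-Ext groups below degree $-k$. The substantive step is then to show that every point of $DFlag_n(V)$ already lands in $Filt_n^k$: this is where the flatness-and-injectivity condition actually does work, since it bounds the amplitude of all the $F^i\otimes_A H^0(A)$ by that of $V$, and one computes the filtered Ext groups via the Rees construction $\oplus F^i t^i$ and the two-term projective resolution over $H^0(A)[t]$. Only after this computation can one conclude that $DFlag_n(V)$ sits inside the homotopy fibre of $Filt_n^k \to Perf^k$ as an open substack, and hence is $(k+2)$-geometric. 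You should add this bounding argument; the rest of your outline (the homotopy-pullback presentation, the passage through $N_W$, and the openness of the defining condition, for which the paper simply cites Proposition~2.44 of \cite{diNatale14a}) is fine.
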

\begin{proof}
We first note that $N_{W}(\cat Flag'_{n}(V))$ is the homotopy limit of $Filt_{n} \rightrightarrows Perf$ as $N_{W}$ commutes with homotopy fibre products of simplicial categories whose homotopy categories are groupoids.
As a homotopy pullback of locally geometric stacks it is locally geometric.
 
To see the substack is locally geometric, use Proposition 2.44 in \cite{diNatale14a}.

We now want to refine this result and show that $DFlag_{n}(V)$ is geometric. We let $k-1$ be the amplitude of the complex $V$ and will show that $DFlag_{n}(V)$ is $(k+2)$-geometric.
We introduce the $k$-geometric stack $Filt_{n}^{k}(A)$ which classifies filtrations $F^{*}$ which satisfy that $\Ext^{i}_{\cat Filt_{n}(H^{0}(A))}(F^{*}\otimes^L_{A}H^{0}(A), F^{*}\otimes^L_{A}H^{0}(A)) = 0$ for $i < -k$.
This is a derived geometric $k$-stack by Theorem 2.33 of \cite{diNatale14a}, and thus is $(k+2)$-geometric. 
Now we check that all the points in $DFlag_{n}(V)$ map to $Filt^{k}_{n}(A) \subset Filt_{n}(A)$. We need to compute Ext-groups for filtered complexes. 
We do this by considering the Rees construction $\oplus F^{i}t^{i}$, cf. \cite{diNatale14a}. Then $\Ext^{*}_{\cat Filt(H^{0}(A))}(F^{*},F^{*}) = \Ext_{H^{0}(A)[t]}^{*}(\oplus F^{i}t^{i},\oplus F^{i}t^{i})^{\set G_{m}}$. As $\set G_{m}$ is reductive we can ignore it when checking vanishing of Ext-groups. All $F^{i}\otimes^L_{A}H^{0}(A)$ have amplitude bounded by the amplitude of $V$. Moreover, they are projective, thus there is a two-term resolution of $\oplus F^{i}t^{i}$ by projective $R[t]$-modules (see e.g.\ Theorem 4.3.7 in \cite{Weibel95}). Thus the Ext groups vanish in degrees less than $k$.

It follows that we can construct $DFlag_{n}(V)$ as an open substack of the homotopy fibre of $Filt_{n}^{k} \to Perf^{k}$, where $Perf^{k}$ is defined analogously to $Filt_{n}^{k}$. Thus $DFlag_{n}(V)$ is $(k+2)$-geometric. 
\end{proof}

Moreover one can show that the derived Grassmannian $DFlag_{1}(V)$ is an enhancement of the usual Grassmannian, i.e.\ $\pi_{0}\pi^{0}D Flag_{1}(V) = \prod_{k_{i}, i}Gr(k_{i}, H^{i}(V))$, see Theorem 2.42 in \cite{diNatale14a}, and similar for the derived flag varieties.

The tangent complex of $DFlag_{n}(V)$ at $(F^{*}, w_{F})$ is computed by the homotopy limit of the tangent complexes:
\[T_{F^{*}, w_{F}}DFlag(V)=\operatorname{cone}(({\chi,0} ):\holim(\uEnd_{F}(F^{0}) \rightrightarrows \uEnd(F^{0}))[1]
\]
  Here $\uEnd_{F}(V)$ is the derived endomorphism space in the model category of filtrations, $\chi$ is the inclusion and $0$ is the constant zero map. 
For this and similar cones we will write $\uEnd(F^{0})/\uEnd_{F}(F^{0})$

\subsection{The derived period domain I: Algebraically}\label{sect-algdomain}

As the next step towards constructing the derived period domain we
construct in this section a geometric derived algebraic stack $D$ that classifies filtrations with a bilinear form which satisfy only the Hodge-Riemann orthogonality condition.
The derived period domain $U$ will then be the open substack of $D^{an}$ determined by the second and third Hodge-Riemann condition (positivity). 

We fix a non-negative integer $n$ and a complex $V = V_{\set Q} \otimes \C$ 
concentrated in degrees $0$ to $2n$ with a $2n$-shifted non-degenerate symmetric bilinear form. Clearly this map can  be extended by multiplication to a bilinear map on $V\otimes A$.

\begin{defn}
Given an $A$-module $W$ we denote by $\Sym^{2}W$ the derived coinvariants of the natural $\Z/2\Z$-action on $V\otimes V$ and we call a map  $Q: \Sym^{2}W \to A[k]$ for some integer $k$ a \emph{shifted bilinear form}. $Q$ is called \emph{non-degenerate} if the associated map $q: W \to W^{\vee}[2n]$ is a weak equivalence.  ]
\end{defn}
We want to consider the moduli stack $D_n(V,Q)$ of filtrations $F^{*}$ on the complex $V$ such that $F^{i}$ and $F^{n-i+1}$ are orthogonal with respect to $Q$. 

\begin{thm}\label{thm-periodstack}
Let $(V,Q)$ be as above.
Then there is a geometric derived stack $D = D_n({V,Q})$ which classifies decreasing filtrations of $V$ of length $n+1$ that satisfy the Hodge-Riemann orthogonality relation with respect to $Q$. $D$ enhances the product of closures of classical period domains.

Here $D = N_{W}(\cat D_n({V,Q}))$ where $\cat D_n(V,Q)(A)$ is a certain simplicial category whose homotopy category is given as follows: Objects are given by triples $(F^{*},  w_{F}, Q_{F})$ where
\begin{itemize}
\item $F^{*}$ is a filtration of perfect $A$-modules of length $n+1$,
\item $Q_{F}$ is a non-degenerate $2n$-shifted bilinear form on $F^{0}$ that vanishes on $F^{i}\otimes^{L}_{A} F^{n+1-i}$ for all $i$,
\item $w_{F}: (F^{0},Q_{F}) \simeq (V\otimes A, Q)$ is an isomorphism in the homotopy category of perfect complexes with shifted bilinear form, 
\end{itemize}
such that $F^{*}$ gives filtrations on cohomology after tensoring over $A$ with $H^{0}$ and all $H^{j}(F^{i}\otimes^L_{A}H^{0}(A))$ are flat. 

Morphisms from $(F^{*}, w_{F}, Q_{F})$ to $(G^{*}, w_{G}, Q_{G})$ are given by families of morphisms $F^{i}\to G^{i}$ in the homotopy category of $\cat Filt_{n}(A)$ that are compatible with $w_{\bullet}$ and $Q_{\bullet}$.  
\end{thm}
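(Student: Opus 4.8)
The plan is to build $D$ directly on top of the derived flag variety $DFlag_n(V)$ of Lemma~\ref{lemma-dflag}, which is already geometric and already incorporates the flatness and injectivity conditions appearing in the statement, and then to carve out the Hodge--Riemann orthogonality relation by a single homotopy pullback. Over $DFlag_n(V)$ we have the universal filtered complex $\cat F^n\to\dots\to\cat F^0$ together with the tautological equivalence $w\colon\cat F^0\xrightarrow{\sim}V\otimes\OO$; transporting the ($A$-linearly extended) form $Q$ along $w$ produces a universal non-degenerate $2n$-shifted bilinear form $Q_{\cat F}$ on $\cat F^0$, so the only structure we must still impose is that $\cat F^i$ and $\cat F^{n+1-i}$ be $Q_{\cat F}$-orthogonal.

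To encode orthogonality geometrically I would, for each $i$, form the perfect complex $\cat E_i=\cat F^i\otimes_{\OO}\cat F^{n+1-i}$ on $DFlag_n(V)$ and the linear stack $L_i\to DFlag_n(V)$ whose points over a map $\Spec A\to DFlag_n(V)$ are $\Map_A(\cat E_i|_A,A[2n])$; restricting $Q_{\cat F}$ gives a section $q_i$ of $L_i$, and there is also the zero section $0_i$. Then I would define
\[
D\;:=\;DFlag_n(V)\times_{\prod_i L_i}DFlag_n(V),
\]
where the two maps are $(q_i)$ and the zero section $(0_i)$, the product $\prod_i L_i$ is formed over $DFlag_n(V)$, and there are only finitely many orthogonality conditions. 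Since a linear stack on a geometric base is geometric, $\prod_i L_i$ is geometric, and geometric derived stacks are stable under homotopy pullback (Corollary 1.3.3.5 of \cite{Toen05a}); hence $D$ is geometric. (A bound on the geometricity level in terms of the amplitude of $V$ can be extracted exactly as in the proof of Lemma~\ref{lemma-dflag}, but it will not be needed.)

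It then remains to match $D$ with the two descriptions in the statement. Unwinding the homotopy pullback, an $A$-point of $D$ is an $A$-point $(F^*,w_F)$ of $DFlag_n(V)$ --- equivalently a triple $(F^*,w_F,Q_F=w_F^*Q)$ subject to the flatness conditions --- together with, for each $i$, a chosen nullhomotopy of $Q_F|_{\cat E_i}$; packaging this exhibits $D$ as $N_W(\cat D_n(V,Q))$ for the simplicial category in the statement, the chosen nullhomotopies being genuine data in $\cat D_n(V,Q)$ that, like all homotopies, are invisible on passage to homotopy categories. The precise shape of the mapping complexes would be pinned down by the same fibrant-replacement/path-space argument already used for $\cat Flag'_n(V)$. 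For the enhancement claim one argues at the level of $\C$-points: over $\C$ every perfect complex splits as $\bigoplus_j H^j[-j]$, the injectivity conditions in $DFlag_n(V)$ cut out honest decreasing filtrations on each $H^j(V)$, and a nullhomotopy of a map $\cat F^i\otimes\cat F^{n+1-i}\to\C[2n]$ exists precisely when the induced pairing on cohomology in total degree $2n$ vanishes, i.e.\ exactly when $F^i\perp F^{n+1-i}$ for the classical $Q$. Thus the $\C$-points of $\pi_0\pi^0(D)$ are exactly the $Q$-isotropic flags of the prescribed type; since $DFlag_n(V)$ enhances the flag variety \cite{diNatale14a} and the linear-stack pullbacks are compatible with truncation on the flat locus, this identifies $\pi_0\pi^0(D)$ with the closed subvariety of the flag cut out by condition (1), which is the closure of the classical period domain.

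The step I expect to be the main obstacle is not the geometricity of $D$ --- which is formal once the homotopy-pullback presentation is set up --- but the last one: matching the derived orthogonality data (a homotopy pullback of linear stacks) with the honest closed Hodge--Riemann condition after classical truncation, and carrying out the book-keeping needed to present $\cat D_n(V,Q)$ explicitly as a simplicial category with the stated objects, morphisms, and mapping complexes.
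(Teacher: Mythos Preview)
Your approach is correct and takes a genuinely different route from the paper's. Rather than cutting $D$ out of $DFlag_n(V)$ by linear stacks encoding each orthogonality condition, the paper first constructs auxiliary moduli $QPerf$ (perfect complexes with a non-degenerate shifted symmetric form) and $QFilt_n$ (filtered complexes with a form satisfying Hodge--Riemann orthogonality), each as a homotopy pullback along $\Sym^2$ and evaluation maps into $\cat Perf^{\Delta^1}$ or $\cat Filt_1^{\Delta^1}$, and then realizes $\cat D'_n(V,Q)$ as the homotopy fibre of the forgetful map $QFilt_n\to QPerf$ over $(V,Q)$; the open flatness/injectivity conditions are imposed only at the very end. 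This modular route buys two things the paper needs later: the homotopy category of $\cat D_n(V,Q)$ can be read off mechanically via the 2-fibre-product description of homotopy pullbacks of simplicial groupoids (Corollary~1.12 of \cite{Pridham10}), so the ``triples $(F^*,w_F,Q_F)$'' presentation in the statement falls out without further work; and the factorization through $QFilt_n\to QPerf$ is reused verbatim to compute the tangent complex in Proposition~\ref{propn-tangent} and to build the monodromy quotient $D_\Gamma$ in Proposition~\ref{propn-quotientspace}. Your linear-stack construction is leaner for establishing geometricity, but, as you correctly anticipate, identifying your pullback with $N_W$ of the stated simplicial category requires extra book-keeping --- in particular reconciling your choice $Q_F=w_F^*Q$ with the paper's convention that $Q_F$ is independent data made compatible with $Q$ via $w_F$ (a contractible difference, but one that obscures the shape of morphism spaces). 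The enhancement argument is the same formality-over-a-field step in both approaches.
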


\begin{rk} In the case of interest to us $V$ will have a real structure and $Q$ will be real. We note that just as in the underived case the real structure only becomes relevant when considering the positivity condition for the period domain.
\end{rk}

We will construct $\cat D_{n}(V,Q)$ by adding the data of a quadratic form to $\cat Flag_{n}(V)$. As a warm-up and for use in later sections 
we first prove the following result about a stack constructed by Vezzosi in \cite{Vezzosi13}:

\begin{lemma}\label{lemma-perfbilinear}
There is a locally geometric derived stack of perfect complexes with a $2n$-shifted non-degenerate bilinear form, denoted $QPerf$. 

Moreover $QPerf(A)=N_{W}(\cat{QP}erf(A))$ for a simplicial category $\cat {QP}erf(A)$ whose objects 
are given by perfect complexes $W$ over $A$ with a bilinear form $Q: \Sym^{2}W \to A[2n]$ that is non-degenerate.
The morphism space $\Map_{\cat{QP}erf(A)}((W,Q), (W',Q'))$ is given by the homotopy fibre of the map $f \mapsto Q'\circ\Sym(f): \Map_{\cat Perf(A)}(W,W') \to \Map(\Sym^{2}W, A[2n])$ over $Q$.
\end{lemma}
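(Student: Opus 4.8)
The plan is to realise $QPerf$ as an open substack of a stack $QPerf'$ of perfect complexes carrying a (possibly degenerate) $2n$-shifted symmetric form, and to obtain $QPerf'$ as a homotopy pullback of simplicial categories, so that the morphism-space formula falls out of the general shape of mapping spaces in a homotopy pullback. Working in $\sCat$, write $\cat Perf(A)$ for the simplicial category of perfect $A$-modules of the excerpt, let $S\colon\cat Perf(A)\to\cat Perf(A)$ be the functor $W\mapsto\Sym^{2}W$ (a functor of simplicial categories, since we are over $\C$; it is not additive on morphisms, which is why we stay simplicial rather than dg), and let $\cat Perf(A)_{/A[2n]}$ be the slice of perfect $A$-modules equipped with a map to $A[2n]$. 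I would set
\[
\cat{QP}erf'(A)\coloneqq \cat Perf(A)\times^{h}_{\cat Perf(A)}\cat Perf(A)_{/A[2n]},
\]
the homotopy pullback along $S$ and the forgetful functor $(Y,y)\mapsto Y$, whose objects are exactly the pairs $(W,Q\colon\Sym^{2}W\to A[2n])$. Just as in the construction of $\cat Flag'_{n}(V)$ above, one first replaces the forgetful functor by a fibration using the mapping-path construction for dg- and simplicial categories (Section~3 of \cite{Tabuada10a}); this produces an explicit model in which $\Map_{\cat{QP}erf'(A)}((W,Q),(W',Q'))$ is the homotopy fibre, over $Q$, of the composite
\[
\Map_{\cat Perf(A)}(W,W')\xrightarrow{\ f\mapsto\Sym^{2}f\ }\Map_{\cat Perf(A)}(\Sym^{2}W,\Sym^{2}W')\xrightarrow{\ Q'\circ-\ }\Map(\Sym^{2}W,A[2n]),
\]
that is, of $f\mapsto Q'\circ\Sym(f)$, which is the assertion of the lemma for $QPerf'$.

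Next I would check that $QPerf'\coloneqq N_{W}(\cat{QP}erf')$ is locally geometric and locally of finite presentation, and then carve out the non-degenerate locus. Since $\cat Perf(A)$ has only weak equivalences for morphisms, the homotopy pullback above is one of homotopy groupoids, so $N_{W}=\bar W$ takes it to the homotopy pullback in derived stacks of $Perf\xrightarrow{S}Perf\leftarrow Perf_{/\bullet[2n]}$ (Proposition~1.8 of \cite{Pridham10}), where $Perf_{/\bullet[2n]}$ is the stack of perfect complexes with a map to the $2n$-shifted structure sheaf. Both $Perf$ and $Perf_{/\bullet[2n]}$ are locally geometric and locally of finite presentation — the latter, e.g., as the relative linear stack over $Perf$ of the shifted dual of the universal complex, or directly as in \cite{Vezzosi13} — and geometricity and local finite presentation are stable under homotopy pullback (Corollary~1.3.3.5 of \cite{Toen05a}), so $QPerf'$ inherits them. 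To pass to $QPerf$, note that the tautological form on $QPerf'$ adjoins to a map $q\colon\mathcal U\to\mathcal U^{\vee}[2n]$ of perfect complexes on $QPerf'$, $\mathcal U$ the pullback of the universal perfect complex; its cone $C$ is perfect, and the locus where $q$ is a weak equivalence is the complement of the support of $C$, hence open in $\pi^{0}(QPerf')$ because supports of perfect complexes on a locally Noetherian stack are closed. By Lemma~\ref{lemma-open} there is a unique open derived substack $QPerf\hookrightarrow QPerf'$ with that underived truncation; it is again locally geometric and locally of finite presentation, its $A$-points are the pairs with $Q$ non-degenerate (derived Nakayama reducing non-degeneracy over $A$ to non-degeneracy over $\pi_{0}(A)$), and its mapping spaces coincide with those of $QPerf'$, so $QPerf=N_{W}(\cat{QP}erf)$ with $\cat{QP}erf$ the full simplicial subcategory on the non-degenerate objects.

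The main obstacle is the first step: producing an honest simplicial category $\cat{QP}erf'(A)$ with strictly associative composition whose morphism spaces are the prescribed homotopy fibres. As for $\cat Flag'_{n}(V)$ this requires a careful fibrant replacement relative to $\cat Perf(A)$ and the bookkeeping identifying the resulting morphism spaces, complicated here by the fact that $S=\Sym^{2}$ is only a functor of simplicial (not dg-) categories, so one must argue with simplicial mapping spaces throughout rather than with the dg-mapping-complexes available in the $\cat Flag'$ case. The remaining points — local geometricity by stability under homotopy pullback, and openness of non-degeneracy via supports of perfect complexes together with Lemma~\ref{lemma-open} — are routine given the tools already recalled.
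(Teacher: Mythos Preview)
Your proposal is correct and follows essentially the same route as the paper: the paper likewise builds $\cat{QP}erf'$ as a homotopy pullback of simplicial categories (using $\cat Perf^{\Delta^{1}}$ over $\cat Perf\times\cat Perf$ rather than your equivalent slice $\cat Perf_{/A[2n]}$ over $\cat Perf$), deduces the mapping-space description from the fibrant replacement of the arrow category, obtains local geometricity from stability under homotopy pullback together with the linear-stack argument for $\cat Perf^{\Delta^{1}}$ (your $Perf_{/\bullet[2n]}$), and then passes to the open non-degenerate locus. Your support-of-the-cone justification for openness is a bit more explicit than the paper's one-line remark, but otherwise the arguments coincide.
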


\begin{proof}
This is the derived 
stack that is constructed in Section 3 of \cite{Vezzosi13} and denoted by $QPerf^{nd}(2n)$ there. We recall the construction and prove the properties we need.

We use simplicial categories as our model for $(\oo,1)$-categories. First we define $QPerf'$ using the sheaf of simplicial categories $\cat {QP}erf'$ defined via the following homotopy pullback diagram.
\[
\bfig
\Square[\cat {QP}erf'(A)`\cat Perf(A)^{\Delta^{1}}`\cat Perf(A)`\cat Perf(A)\times \cat Perf(A);``ev_{0}\times ev_{1}`\Sym^{2}_{A} \times A[2n{]}]
\efig
\] 

Here $(-)^{\Delta^{1}}$ denotes the functor category from the two object category with one morphism, $* \to *$. Then $ev_{0}\times ev_{1}$ denotes the product of evaluation maps. $\Sym^{2}_{A}$ is the functor $V \mapsto k \otimes^L_{k[\Sigma_{2}]} (V \otimes^L_{N(A)} V)$, where the $\Sigma_{2}$-action is given by swapping factors.

To compute the homotopy limit we need to replace $\cat Perf^{\Delta^{1}} \to \cat Perf\times \cat Perf$ by a fibration. We consider the dg-category $\cat Perf^{\Delta^{1},\sim}$ which replaces morphisms in $\cat Perf^{\Delta^{1}}$, which by definition are commuting squares, by homotopy commutative squares, i.e.\ in $\cat Perf^{\Delta^{1},\sim}$ we have $\uHom(f:K\to L,g:M\to N) = (\uHom(K,M)\oplus \uHom(L,N)\oplus \uHom(K,N)[1]), \Delta$ where $\Delta: (x,y,h) \mapsto (dx,dy, dh+y\circ f - g\circ x)$. As in $\cat Perf$ we only consider morphisms which become invertible in the homotopy category.
Then we apply the Dold-Kan functor which preserves fibrations.

Unravelling definitions the limit is the category of perfect complexes with a bilinear form.  The objects are given by objects $W \in \cat Perf(A)$ and $Q: \Sym^{2}W\to A[2n]$ in $\cat Perf(A)^{\Delta^{1}}$. 
Morphisms from $(W,Q_{W})$ to $(U, Q_{U})$ are given by the  fibre of $f \mapsto Q'\circ\Sym^{2}(f): \Map(W, U) \to \Map(\Sym^{2}W, A[2n])$ over $Q$.

We apply $N_W$ to $\cat{QP}erf'$ and call the resulting derived stack $QPerf'$. We know that $N_{W}$ respects this homotopy limit. Thus by Lemma \ref{lemma-perfdelta} below $QPerf'$ is a homotopy limit of locally geometric stacks and thus locally geometric.

The condition that $Q$ is non-degenerate is easily seen to be open: It is enough to check on open subsets of the underlying underived stack, and being quasi-isomorphic is an open condition on complexes. 
Thus the substack of non-degenerate bilinear forms $QPerf(2n) \subset QPerf'(2n)$ is also locally geometric. 

It is clear from these definitions that $\pi_{0}\pi^{0}(QPerf(2n))(A)$ consists of isomorphism classes of perfect complexes over $H^{0}(A)$ with a non-degenerate quadratic form. 
\end{proof}
\begin{rk}
The reason we have to work with simplicial categories is of course that $f \mapsto \Sym^{2}(f)$ is not a linear map. 
\end{rk}

\begin{lemma}\label{lemma-perfdelta}
$N_{W}(\cat Perf^{\Delta^{1}})$ is a locally geometric stack.
\end{lemma}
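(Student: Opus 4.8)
The plan is to exhibit $N_W(\cat Perf^{\Delta^1})$ as a homotopy pullback of locally geometric stacks, and then invoke the fact (Corollary 1.3.3.5 in \cite{Toen05a}) that geometric stacks are stable under homotopy pullback, together with the observation that a homotopy pullback of locally geometric stacks is locally geometric (cover each of the three stacks by geometric opens and pull back). The key point is the identification of the functor category: an object of $\cat Perf^{\Delta^1}(A)$ is a morphism $K \to L$ of perfect $A$-modules, so $N_W(\cat Perf^{\Delta^1})$ should be the stack of morphisms of perfect complexes. I would first make precise that $N_W(\cat Perf^{\Delta^1})$ is the mapping stack $\Map(\Delta^1, Perf)$ internal to derived stacks, or more concretely fits into a homotopy pullback
\[
\bfig
\Square[N_W(\cat Perf^{\Delta^1})`Filt_1`Perf\times Perf`Perf;```]
\efig
\]
where the right-hand map records the morphism itself (its source, as $\cat Filt_1$ consists of injections $F^1 \hookrightarrow F^0$) — but in fact the cleanest route avoids $Filt_1$ entirely.

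Concretely, I would argue as follows. Applying $N_W$ commutes with homotopy pullbacks of simplicial categories whose homotopy categories are groupoids (as used repeatedly above, via Proposition 1.8 in \cite{Pridham10}), and all the categories in sight have groupoid homotopy categories once we restrict, as in the construction of $\cat Perf$, to morphisms that become invertible in the homotopy category. So it suffices to present $\cat Perf^{\Delta^1}$ (or its quasi-equivalent replacement $\cat Perf^{\Delta^1,\sim}$ with homotopy-commuting squares, which has the same $N_W$) as a homotopy limit of copies of $\cat Perf$. The source-target functor $ev_0 \times ev_1 : \cat Perf^{\Delta^1,\sim} \to \cat Perf \times \cat Perf$ is, after the standard path-space replacement (Section 3 of \cite{Tabuada10a}), a fibration, and its homotopy fibre over a pair $(K,L)$ is the space $\Map_{\cat Perf(A)}(K,L)$. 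An alternative, which I would actually prefer to write down, is to use that $N_W(\cat Perf) = Perf$ is itself locally geometric and locally of finite presentation (by \cite{diNatale14a}, as recalled above), hence its arrow stack — which is $\Map(\Delta^1, Perf)$, the derived mapping stack out of the simplicial interval — is again locally geometric: mapping stacks $\Map(X, Y)$ with $X$ a finite simplicial set (here the nerve of $* \to *$, i.e.\ $\Delta^1$) and $Y$ locally geometric locally of finite presentation are locally geometric, since $\Map(\Delta^1, Y)$ is computed as the homotopy limit (matching object) of finitely many copies of $Y$ over the cosimplicial diagram, and such finite homotopy limits of (locally) geometric stacks are (locally) geometric.

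The main obstacle is bookkeeping rather than conceptual: making sure that $N_W(\cat Perf^{\Delta^1})$ really is the homotopy limit of the relevant cosimplicial/cospan diagram of copies of $Perf$ — i.e.\ that the passage $\cat C \mapsto N_W(\cat C)$ interacts correctly with the internal-hom construction $\cat C \mapsto \cat C^{\Delta^1}$ — and checking that the replacement $\cat Perf^{\Delta^1,\sim}$ has groupoid homotopy category so that $N_W$ may be moved past the homotopy limit. Once that is in place, local geometricity and local finite presentation are immediate from stability of (locally) geometric stacks under homotopy pullback and the known geometricity of $Perf$. I would close by remarking that the same argument gives, more generally, that $N_W(\cat Perf^{\Delta^1})$ is locally of finite presentation, which is what is needed when it is fed into Lemma \ref{lemma-perfbilinear}.
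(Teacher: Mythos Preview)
There is a genuine gap. Your preferred route rests on identifying $N_W(\cat Perf^{\Delta^1})$ with the mapping stack $\Map(\Delta^1, Perf)$, and this identification is false. The stack $Perf$ is already $\infty$-groupoid-valued: by construction $\cat Perf(A)$ keeps only those morphisms that become invertible in the homotopy category, and $N_W$ produces its classifying space. Hence $\Map_{\sSet}(\Delta^1, Perf(A))$ is the \emph{path space} of $Perf(A)$; a point of it is an equivalence $K \xrightarrow{\sim} L$, and in fact $\Map(\Delta^1, Perf) \simeq Perf$ via either evaluation. By contrast, in the paper's notation $\cat Perf^{\Delta^1}(A)$ is the arrow category of the \emph{full} dg-category of perfect $A$-modules (objects are arbitrary maps $K \to L$), with morphisms between arrows then restricted to weak equivalences. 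So $N_W(\cat Perf^{\Delta^1})$ classifies arbitrary morphisms of perfect complexes, not equivalences. Your ``finite homotopy limit of copies of $Perf$'' argument therefore proves something about the path space of $Perf$, which is the wrong object; and there is no way to build the arrow stack as such a limit, since its fibre over $(K,L) \in Perf \times Perf$ is the linear space $\Map(K,L)$, not a point.

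The paper exploits exactly this fibrewise description: it identifies $N_W(\cat Perf^{\Delta^1})$ with the linear stack $\set V_E$ over $Perf \times Perf$ associated to the perfect complex $E = \cat Hom(p_1^*\cat U, p_2^*\cat U)^*$ (with $\cat U$ the universal perfect complex), and then invokes To\"en's Sublemma 3.9 in \cite{Toen07}, which says linear stacks of perfect complexes are locally geometric. Your observation that the fibre over $(K,L)$ is $\Map_{\cat Perf(A)}(K,L)$ is correct and is the seed of this argument, but you abandon it instead of following it through. An alternative route, which you do not mention, would be to recognise $N_W(\cat Perf^{\Delta^1})$ as the moduli of objects of the dg-category $\mathrm{Mor}(\cat Perf)$ and invoke To\"en--Vaqui\'e directly; but either way one needs an input beyond stability of geometric stacks under pullback.
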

\begin{proof}
We can see this by identifying our stack with the linear stack associated to the perfect complex $E = \cat Hom(p_{1}^{*}\cat U, p^{*}_{2}\cat U)^{*}$ on the locally geometric stack $\cat Perf \times \cat Perf$. Here $\cat U$ is the universal complex on $\cat Perf$.

To\"en defines linear stacks in Section 3.3 of \cite{Toen14}.
For a derived Artin stack $X$ with quasi-coherent complex $E$ we define the stack $V_{E}$ as a contravariant functor on derived schemes over $X$ by associating $\set V_E(u) = \Map_{Lqcoh(S)}(u^{*}E, \cat O_{S})$ to $u: S \to X$. 
Now $\set V_E$ is locally geometric 
whenever $E$ is perfect, the proof follows from Sublemma 3.9 in \cite{Toen07}. 

To show this agrees with our construction above, we consider the definition of $Perf^{\Delta^{1}}$ locally. Over $(V,W): S \to Perf\times Perf$ we have:
\[
\Map_{Perf \times Perf}((V,W), Perf^{\Delta^{1}}) 
= N_{W}\left(\cat Perf^{I}(S)) \times_{\cat Perf(S) \times \cat Perf(S)} \{(V,W)\}\right)
\]

On the other hand $\set V_{E}((V,W)) = \Map_{S}(\cat Hom(V,W)^{*}, \cat O_{S})$. Both of these compute the mapping space from $V$ to $W$ in $\cat Perf(S)$. (The characterization in terms of the fibre product can for example be found in Sections 1.2 and 2.2 of \cite{Lurie11a}.)
\end{proof}

\begin{rk}
There is another explicit description of $QPerf$: as an open substack of the linear stack associated to the perfect complex $\cat Hom(\Sym^{2}\cat U, \cat O[2n])$ on $Perf$, following the same arguments as Lemma \ref{lemma-perfdelta}.

To show this agrees with our construction above, consider the definition of $QPerf$ locally. (Looking at the simplicial set, not the dg-category.) Over $V: S \to Perf$ we have:
\begin{eqnarray*}
\Map_{Perf}(V, QPerf) 
&=& \Map(S, QPerf) \times_{\Map(S, Perf)} \{V\} \\
&=& \left(Perf(S) \times_{(Perf\times Perf)(S)} Perf^{\Delta^{1}}(S))\right) \times_{Perf(S))}\{V\} \\
&=&  \left\{ (\Sym^{2}V, \cat O) \right\}
\times_{Perf(S)\times Perf(S)}Perf(S)^{\Delta^{1}}
\end{eqnarray*}
And the last line is precisely the mapping space from $\Sym^{2}V$ to $\cat O$ in $Perf(S)$, i.e.\ it is $\set V(\Sym^{2}(\cat U))(V)$.

Applying shift and dual does not affect the argument, hence we are done.
\end{rk}
We can use the same technique as in Lemma \ref{lemma-perfbilinear} to consider filtered complexes with a bilinear form:
\begin{lemma}\label{lemma-filtbilinear}
There is a locally geometric derived stack $QFilt_{n}$ of perfect complexes with an $(n+1)$-term filtration and a non-degenerate bilinear form compatible with the filtration.

To be precise $QFilt_{n} = N_{W}(\cat{QF}ilt_{n})$ for a simplicial category $\cat QFilt_{n}$ whose objects are filtrations of perfect $A$-modules $F^{n}\to F^{n-1}\to \dots \to F^{0}$ together with a non-degenerate bilinear form $Q: \Sym^{2}F^{0}\to  A[2n]$ such that $Q$ vanishes on the image of $F^{i}\otimes^L_{A} F^{n+1-i}$ for all $i$.

The morphism space $\Map_{\cat{QP}erf(A)}((F^{*},Q_{F}), (G^{*},Q_{G}))$ is given by the fibre of the map $f \mapsto Q_{G}\circ\Sym(f): \Map_{\cat Filt_{n}}(F^{*},G^{*}) \to \Map_{\cat Filt_{1}}(\Sym^{2}F^{*}, A[2n])$ over $Q_{F}$.
\end{lemma}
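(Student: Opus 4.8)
The plan is to carry out the filtered analogue of the construction in Lemma \ref{lemma-perfbilinear}: I would keep $\cat Filt_n$ in the rôle that $\cat Perf$ plays there for the underlying object, and use the arrow category $\cat Filt_1^{\Delta^{1}}$ in the rôle of $\cat Perf^{\Delta^{1}}$ for the bilinear form. The point of working in $\cat Filt_1$ rather than $\cat Perf$ is that the compatibility of $Q$ with the filtration then gets built directly into a homotopy pullback.

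First I would produce a functor of simplicial categories $\Phi\colon \cat Filt_n(A)\to\cat Filt_1(A)$ sending a length-$(n+1)$ filtration $F^{n}\to\dots\to F^{0}$ to the two-term filtration $[(\Sym^{2}F)^{n+1}\to\Sym^{2}F^{0}]$, where $(\Sym^{2}F)^{n+1}\subseteq\Sym^{2}F^{0}$ is the sub-object generated by the images of the $F^{i}\otimes F^{n+1-i}$. This is cleanest via the Rees construction $\xi(F^{*})=\bigoplus_{i}F^{i}t^{i}$ over $A[t]$ of \cite{diNatale14a}: one forms $\Sym^{2}_{A[t]}\xi(F^{*})$, whose graded pieces are the $(\Sym^{2}F)^{\bullet}$, and keeps only the steps in $t$-degrees $0$ and $n+1$; this is manifestly functorial and preserves weak equivalences. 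Writing $\iota\colon\cat Perf(A)\to\cat Filt_1(A)$ for $W\mapsto[0\to W]$, I would define $\cat{QF}ilt'_{n}(A)$ by the homotopy pullback
\[
\bfig
\Square[\cat{QF}ilt'_{n}(A)`\cat Filt_1(A)^{\Delta^{1}}`\cat Filt_{n}(A)`\cat Filt_1(A)\times\cat Filt_1(A);``ev_{0}\times ev_{1}`\Phi\times\iota A[2n{]}]
\efig
\]
with the second component of the bottom map the constant functor at $\iota A[2n]$, computed exactly as in Lemma \ref{lemma-perfbilinear} by replacing $\cat Filt_1^{\Delta^{1}}$ with its homotopy-commuting-squares variant $\cat Filt_1^{\Delta^{1},\sim}$ so that $ev_{0}\times ev_{1}$ becomes a fibration, via the path-object argument of \cite{Tabuada10a}. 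Unwinding the resulting strict pullback, an object is a length-$(n+1)$ filtration $F^{*}$ with a morphism $Q\colon\Phi(F^{*})\to\iota A[2n]$ in $\cat Filt_1$: its $F^{0}$-component is a $2n$-shifted bilinear form $Q\colon\Sym^{2}F^{0}\to A[2n]$, and $\cat Filt_1$-compatibility with $[0\to A[2n]]$ says exactly that $Q$ restricted to $(\Sym^{2}F)^{n+1}$ is null-homotopic, i.e.\ that $Q$ vanishes on each $F^{i}\otimes F^{n+1-i}$. The morphism space comes out as the homotopy fibre of $f\mapsto Q_{G}\circ\Sym(f)\colon\Map_{\cat Filt_n}(F^{*},G^{*})\to\Map_{\cat Filt_1}(\Sym^{2}F^{*},A[2n])$ over $Q_{F}$, as in the statement.

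For local geometricity I would apply $N_{W}$. The homotopy categories of $\cat Filt_n$, $\cat Filt_1$ and $\cat Filt_1^{\Delta^{1},\sim}$ are groupoids (for the last, a homotopy-commuting square whose source and target legs are equivalences has a homotopy inverse), so $N_{W}$ commutes with the homotopy pullback. Now $N_{W}(\cat Filt_n)=Filt_n$ and $N_{W}(\cat Filt_1)=Filt_1$ are locally geometric by \cite{diNatale14a}, and $N_{W}(\cat Filt_1^{\Delta^{1},\sim})$ is locally geometric by the argument of Lemma \ref{lemma-perfdelta}: it is the linear stack over the locally geometric stack $Filt_1\times Filt_1$ associated to the perfect complex $\cat Hom_{\cat Filt_{1}}(p_{1}^{*}\cat U,p_{2}^{*}\cat U)^{\vee}$, where $\cat U$ is the universal filtered complex and $\cat Hom_{\cat Filt_{1}}$ is the (perfect) complex of filtered homomorphisms. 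Hence $QFilt'_{n}:=N_{W}(\cat{QF}ilt'_{n})$ is a homotopy pullback of locally geometric stacks, so is locally geometric; and non-degeneracy of $Q$ is an open condition, checked on $\pi_{0}\pi^{0}$ exactly as in Lemma \ref{lemma-perfbilinear}, so the substack $QFilt_{n}\subseteq QFilt'_{n}$ of non-degenerate forms is again locally geometric.

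The one genuinely new ingredient over Lemma \ref{lemma-perfbilinear} is the functor $\Phi$, and checking that $\cat Filt_1$-compatibility with $\Phi(F^{*})$ really does encode the orthogonality relations $Q(F^{i},F^{n+1-i})=0$; this is where I expect the (still routine) work to concentrate, and the Rees-module picture of filtered objects is what makes the symmetric-square bookkeeping transparent.
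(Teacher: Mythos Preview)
Your proposal is correct and follows essentially the same approach as the paper: the same homotopy pullback square with $\cat Filt_{1}^{\Delta^{1}}$ in place of $\cat Perf^{\Delta^{1}}$, the same fibration replacement, the same linear-stack argument for $N_{W}(\cat Filt_{1}^{\Delta^{1}})$, and the same openness argument for non-degeneracy. The only difference is cosmetic: the paper defines the functor $\cat S$ by $F^{*}\mapsto\bigl[\I(\oplus_{i}F^{i}\otimes F^{n+1-i})\to\Sym^{2}F^{0}\bigr]$ directly, whereas you route the construction through the Rees module to make functoriality explicit --- a harmless elaboration.
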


\begin{proof}
We proceed like in the proof of Lemma \ref{lemma-perfbilinear} and consider the homotopy pullback diagram
\begin{equation}\label{diagram-qfiltpullback}
\bfig
\Square[\cat {QF}ilt'_{n}(A)`\cat Filt_{1}(A)^{\Delta^{1}}`\cat Filt_{n}(A)`\cat Filt_{1}(A)\times \cat Filt_{1}(A);``ev_{0}\times ev_{1}`\cat S \times (0 \to A[2n{]})]
\efig
\end{equation} 

Here the map $\cat S$ sends $F^{n}\to \dots \to F^{0}$ to $\cat S_{F} \to \Sym^{2}F^{0}$ where $\cat S_{F}$ is the image of $\oplus_{i}(F^{i}\otimes^L_{A} F^{n+1-i})$ under $F^{0}\otimes_{A}^{L} F^{0}\to\Sym^{2}F^{0}$. 

We replace the right vertical map by a fibration in the same manner as in Lemma \ref{lemma-perfbilinear}. 

Then we see that the objects of $\cat {QF}ilt'_{n}(A)$ are filtrations with a bilinear form that vanishes on $\cat S_{F}$, i.e.\ satisfies Hodge Riemann orthogonality. Similarly we write down the mapping spaces.

Again we see that $QFilt'_{n} = N_{W}(\cat {QF}ilt'_{n})$ is a homotopy limit of locally geometric stacks and thus locally geometric. We can show $N_{W}(\cat Filt_{1}^{\Delta^{1}})$ is locally geometric by an analogue of \ref{lemma-perfdelta}.

 Again the substack of non-degenerate forms $QFilt_{n}$ is an open substack.
\end{proof}
\begin{rk}
A natural way to understand the orthogonality relation is to consider the map $q: F^{0}\to (F^{0})^{\vee}$ induced by $Q$. The filtration $F^{*}$ induces a filtration $(F^{0}/F^{i})^{\vee}$ and the orthogonality says precisely that $q$ respects this filtration.
\end{rk}

We are now ready to construct the stack $D_{n}(V,Q)$.

\begin{proof}[Proof of Theorem \ref{thm-periodstack}]
To define $\cat D_{n}(V,Q)$ we again repeat the construction of Lemma \ref{lemma-perfbilinear}, this time with $DFlag$ in place of $Perf$.

We recall that the homotopy fibre over $V$ of the projection map $\cat Filt_{k} \to \cat Perf$ is given by $\cat Flag_{k}(V)$. 
While it is possible to compute the homotopy limit directly, 
the homotopy category of the homotopy fibre product can be computed more efficiently using (the proof of) Corollary 1.12 in \cite{Pridham10}.

Assume we are given a diagram $\cat A \stackrel F \to \cat B \stackrel G\leftarrow \cat C$ of simplicial categories such that all their homotopy categories $\pi_{0}(\cat A)$ etc.\ are groupoids. Then the cited result says we can compute $\pi_{0}(\cat A\times^{h}_{\cat B}\cat C)$ as $\pi_{0}(\cat A)\times^{(2)}_{\pi_{0}(\cat B)} \pi_{0}(\cat C)$. Here the 2-fibre product of categories is defined to have as objects triples $(a\in \cat A, c\in \cat C, \omega : F(a)\cong G(c))$ and morphisms from $(a,c,\omega)$ to $(a',c',\omega')$ are just given by morphisms $f: a\to a'$ and $g: c\to c'$ such that $G(g)\circ \omega = \omega' \circ F(f)$.

We consider first the homotopy fibre of the map of simplicial categories $\cat Filt_{1}^{\Delta^{1}} \to \cat Perf^{\Delta^{1}}$ 
over $Q: \Sym^{2}V\to A[2n]$. We call it $\cat Flag_{1}^{\Delta^{1}}(Q)$, abusing notation, and we note it is  geometric by imitating the proof of Lemma \ref{lemma-dflag}. 

By the above, objects of the homotopy category can be written as diagrams
\[
\bfig
\hSquares[M^{0}`M^{1}`\Sym^{2}V`N^{0}`N^{1}`A;`w_{M}`f_{0}`f_{1}`Q``w_{N}]
\efig 
\] 
where the left hand square commutes strictly and the right hand square is a commutative diagram in the homotopy category. 

$\cat D_{n}'(V,Q)$ is defined as the homotopy pullback of the following diagram (suppressing $A$ from the notation): 
\[
\bfig
\Square[\cat {D}'_{n}(V,Q)`\cat {F}lag_{1}(Q)^{\Delta^{1}}`\cat Flag_{n}(V)`\cat Flag_{1}(\Sym^{2}V)\times \cat Flag_{1}(A[2n{]});``ev_{0}\times ev_{1}`\cat S \times (0 \to A[2n{]})]
\efig
\] 

We use the same tool again to compute $\cat D'$ as the 2-fibre product.
We find that its homotopy category has objects given by triples $(F^{*}, w_{F}, Q_{F})$ consisting of a filtration $F^{*}$, an isomorphism $w_{F}: F^{0}\to V\otimes A$ in the homotopy category and a symmetric bilinear form $Q_{F}: \Sym^{2}F^{0} \to A$ compatible with $Q$ that vanishes on $\cat S_{f} \subset \Sym^{2}(F^{0})$.

Morphisms are just homotopy commutative collections of object-wise morphisms.

Then $D' \coloneqq N_{W}(\cat D')$ is geometric as a homotopy limit of geometric stacks. 

We define $D(A)$ to be the subcategory of $D'(A)$ whose objects satisfy that
$H^{j}(F^{i}\otimes^L_{A}H^{0}(A)) \to H^{j}(F^{i-1}\otimes^L_{A}H^{0}(A))$ are injections of flat $H^{0}(A)$-modules. These are open conditions on $D'$, the proof follows as in Proposition 2.41 of \cite{diNatale14a}.

It remains to check that $D$ enhances the period domain, i.e.\ that $\pi_{0}\pi^{0}(D)(A)$ is a product of the usual period domains.

To do this we assume $A$ is an algebra (considered as a constant simplicial algebra). We observe that the objects of $D(A)$ are formal, i.e. any $(F^{*}, w_{F}, Q_{F})$ is equivalent to some filtration on the complex $V\otimes A$ with zero differential. As the complex $F^{i}$ is perfect and the cohomology groups of the $F^{i}$ are flat they are projective
and the complexes are formal. Thus we get a homotopy equivalence from $F^{n} \to \dots \to F^{0} \simeq V\otimes A$ to $H(F^{n}) \to \dots \to H(F^{0}) \cong V \otimes A$, and these are the objects parametrized by the product of closed period domains. 
\end{proof}

\begin{rk}\label{rk-firstdescription}
We note that we can equivalently define $\cat D'(A)$ as the homotopy limit $\holim((\pi,V): \cat{QF}ilt_{n}(A) \rightrightarrows \cat{QP}erf(A))$. 
The arrows are induced by the forgetful functor $\pi: (F^{\bullet}, Q_{F}) \mapsto (F^{0}, Q_{F})$ and the constant functor that sends any filtration to $(V,Q)$. 

As homotopy limits commute this description as a homotopy fibre at $(V,Q)$ of the projection $\cat{QF}ilt \to \cat{QP}erf$ of homotopy limits is the same as the homotopy pullback of homotopy fibres over $V$ and $Q$ that we used above.
\end{rk}

Recall that we gave a description of sheaves on a derived stack $S$ as fibrant cofibrant homotopy cartesian sections of a left Quillen presheaf. 
This description works not just for $\cat Perf$ and $\cat Filt_{n}$ but also $\cat{QF}ilt_{n}$ and indeed $\cat D_{n}(V,Q)$
There is a slight subtlety 
in this model: The presection $U \mapsto \cat O_{S}(U)$ is not fibrant, so where it appears, for example in the definition of shifted bilinear forms $\cat{QF}ilt$, it needs to be replaced fibrantly. Of course this does not affect the homotopy category, so we will not make this explicit if there is no ambiguity.

As an example we state the following 
global version of Theorem \ref{thm-periodstack} to characterize maps from non-affine derived stacks to $D_{n}(V,Q)$. Here by a perfect complex we mean a fibrant cofibrant homotopy cartesian section as above, and we write $P$  for fibrant 
 cofibrant replacement in the model category of presections. 
 
\begin{cor}\label{cor-globalperiodstack}
Let $S$ be a derived stack. Then the objects of $\cat D_{n}(V,Q)(S)$ may be given by triples $(F^{*}, w_{F}, Q_{F})$ where
\begin{itemize}
\item $F^{*}$ is a filtration of length $n+1$ of perfect complexes on $S$
\item $Q_{F}: \Sym^{2}F^{0} \to P\cat O_{S}[2n]$ is a non-degenerate $2n$-shifted bilinear form on $F^{0}$ that vanishes on $F^{i}\otimes_{A}^{L} F^{n+1-i}$ for all $i$,
\item $w_{F}: (F_{0},Q_{F}) \simeq (V\otimes \cat O_{S}, Q)$ is an isomorphism in the homotopy category of complexes with the shifted bilinear forms, 
\end{itemize}
such that $F^{*}$ gives filtrations on cohomology after tensoring over $\cat O_{S}$ with $H^{0}(\cat O_{S})$ and all $H^{j}(F^{i}\otimes^L_{\cat O_{S}}H^{0}(\cat O_{S}))$ are flat. 
\end{cor}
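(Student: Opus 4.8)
The plan is to deduce Corollary \ref{cor-globalperiodstack} from Theorem \ref{thm-periodstack} together with the strictification machinery described in the paragraphs immediately preceding the statement. Theorem \ref{thm-periodstack} already gives the desired description of the objects of $\cat D_n(V,Q)(A)$ for a derived affine $R\Spec(A)$, so the only work is to transport this local description across a cover and check that the gluing data are exactly a filtration of perfect complexes, a shifted bilinear form and a trivialization \emph{on $S$} in the sense of homotopy cartesian sections.

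First I would fix a simplicial resolution $U_\bullet \to S$ of the derived stack $S$ by derived affines, as in the discussion of $\cat{QC}oh$. By definition $\cat D_n(V,Q)(S) = \holim_i \cat D_n(V,Q)(U_i)$, and the strictification result quoted above (from \cite{Simpson01, Spitzweck10, Toen05a}) identifies this homotopy limit with the simplicial category of fibrant cofibrant homotopy cartesian sections of the left Quillen presheaf $i \mapsto \cat D_n(V,Q)(U_i)$. Since $\cat D_n(V,Q)$ was itself built (in the proof of Theorem \ref{thm-periodstack}) out of $\cat Filt_n$, $\cat Perf$ and the bilinear-form data by finite homotopy limits, and since $N_W$, $DK$ and the relevant evaluation/$\Sym^2$ functors all commute with these, the homotopy cartesian sections for $\cat D_n(V,Q)$ are precisely compatible homotopy cartesian sections for the underlying data: a section $F^*$ of the presheaf of filtered perfect complexes, a section $Q_F$ of the presheaf of shifted bilinear forms, and a section $w_F$ of the presheaf of trivializations. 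Applying Theorem \ref{thm-periodstack} levelwise on the $U_i$ and then assembling gives exactly the triple $(F^*, w_F, Q_F)$ of the statement, with $F^*$ a length-$(n+1)$ filtration of perfect complexes on $S$, $Q_F \colon \Sym^2 F^0 \to P\cat O_S[2n]$ a non-degenerate $2n$-shifted bilinear form vanishing on $F^i \otimes F^{n+1-i}$, and $w_F$ an isomorphism in the homotopy category of complexes-with-form over $(V\otimes \cat O_S, Q)$.

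Two points need a little care. One is the appearance of $P\cat O_S$: as noted in the excerpt, the presection $U \mapsto \cat O_S(U)$ is not fibrant, so the target of the bilinear form must be replaced by a functorial fibrant model $P\cat O_S$ in the category of presections; this is why the statement is phrased with $P\cat O_S[2n]$ rather than $\cat O_S[2n]$, and it does not change the homotopy type. The other is the flatness and cohomology-filtration conditions: these were defined on $\cat D_n(V,Q)(A)$ in terms of $H^j(F^i \otimes_A H^0(A))$, and since $\pi^0$ and taking cohomology are compatible with the cover $U_\bullet$ (this is exactly the kind of open/flatness condition handled via Lemma \ref{lemma-open} and Proposition 2.41 of \cite{diNatale14a}), the levelwise conditions glue to the stated global condition that $F^*$ gives filtrations on cohomology after tensoring with $H^0(\cat O_S)$ with all $H^j(F^i \otimes_{\cat O_S} H^0(\cat O_S))$ flat.

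I expect the main obstacle to be purely bookkeeping: verifying that strictification applies to $\cat D_n(V,Q)$ rather than just to $\cat{QC}oh$ — i.e.\ that the finite homotopy limits and the functors $N_W$, $\Sym^2$, $ev_0\times ev_1$ used to construct $\cat D_n(V,Q)$ interact correctly with the left Quillen presheaf structure and with fibrant-cofibrant replacement of presections — and that the finiteness hypotheses under which the accessible proof in \cite{Spitzweck10} is stated are met. As remarked in the excerpt, these finiteness assumptions hold whenever $S$ is of finite type, and for the application in the proof of Theorem \ref{mainA} this is exactly the situation; so the corollary should be stated (or applied) with that mild hypothesis in force. No genuinely new geometric input is required beyond Theorem \ref{thm-periodstack} and the strictification formalism.
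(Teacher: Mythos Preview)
Your proposal is correct and follows exactly the approach the paper intends. In fact the paper gives no separate proof of this corollary at all: it is stated as an immediate consequence of the strictification discussion and Theorem \ref{thm-periodstack}, with the remark about replacing $\cat O_S$ by $P\cat O_S$ and the finite-type hypothesis for \cite{Spitzweck10} already made in the preceding paragraphs. Your write-up simply unpacks what the paper leaves implicit, including the two careful points about fibrant replacement of the structure sheaf and the gluing of the open flatness/filtration conditions.
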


\subsection{The tangent space}\label{sect-tangent}
\begin{propn}\label{propn-tangent}
The tangent complex of $D_n(V,Q)$ at the $\C$-point $(F^{\bullet}, w_{F}, Q_{F})$ is given by $\uEnd_{Q_{F}}(F^{0})/\uEnd_{F, Q_{F}}(F^{0})[1]$. 

Here $\uEnd_{Q_{F}}(F^{0})$ is the complex of endomorphisms of $F^{0}$ that are antisymmetric with respect to $Q_{F}$, i.e. satisfy $Q_{F}(f-,-)+ (-1)^{|f|}Q(-,f-) = 0$. $\uEnd_{F,Q_{F}}(F^{0})$ are those 
endomorphisms that moreover respect the filtration. 
\end{propn}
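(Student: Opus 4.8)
The plan is to compute the tangent complex of $D_n(V,Q)$ by exploiting the description of $\cat D_n'(V,Q)$ as a homotopy limit of the diagram in Remark \ref{rk-firstdescription}, namely the homotopy fibre of $(\pi, V): \cat{QF}ilt_n \rightrightarrows \cat{QP}erf$ at $(V,Q)$. Since tangent complexes take homotopy limits of stacks to homotopy limits of complexes, and since passing from $D_n'$ to $D_n$ only imposes open conditions (Lemma \ref{lemma-open}), which do not change the tangent complex, it suffices to compute the tangent complexes of $QFilt_n$ and $QPerf$ at the relevant points and assemble the cone. This mirrors exactly the computation of the tangent complex of $DFlag_n(V)$ recorded after Lemma \ref{lemma-dflag}, where $T_{F^*,w_F}DFlag(V) = \operatorname{cone}(\uEnd_F(F^0) \to \uEnd(F^0))$ (up to shift); here we carry the same computation through the $Q$-decorated versions.

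First I would compute the tangent complex of $QPerf$ at $(W, Q_W)$. Using the description of $QPerf$ as the linear stack associated to $\cat Hom(\Sym^2 \cat U, \cat O[2n])$ over $Perf$ (the Remark following Lemma \ref{lemma-perfdelta}), or directly from the homotopy pullback defining $\cat{QP}erf'$, the mapping space $\Map_{\cat{QP}erf(A)}((W,Q),(W',Q'))$ is the homotopy fibre of $f \mapsto Q'\circ \Sym(f)$ over $Q$. Differentiating, the tangent complex at $(W,Q_W)$ is the fibre of the map $\uEnd(W)[1] \to \uHom(\Sym^2 W, A[2n])[1]$ sending an endomorphism $f$ to $Q_W(f-,-) + (-1)^{|f|}Q_W(-,f-)$; equivalently it is $\uEnd_{Q_W}(W)[1]$, the (shifted) complex of $Q_W$-antisymmetric endomorphisms, because $Q_W$ non-degenerate identifies $W$ with $W^\vee[2n]$ and the symmetrization map is then identified with $\mathrm{id} + \sigma$ on $\uEnd(W)$, whose fibre is the antisymmetric part. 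The same computation with $\cat Filt_n$ in place of $\cat Perf$ gives $T_{F^*,Q_F}QFilt_n = \uEnd_{F,Q_F}(F^0)[1]$, the $Q_F$-antisymmetric filtered endomorphisms, shifted.

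Then I would assemble: $T_{(F^\bullet,w_F,Q_F)}D_n(V,Q)$ is the homotopy fibre of $(\pi_* - V_*): T QFilt_n \oplus (\text{stuff at }(V,Q)) \to TQPerf$, but since the second arrow is the constant map at $(V,Q)$ its derivative vanishes, so the tangent complex is simply $\operatorname{cone}$ of $\uEnd_{F,Q_F}(F^0)[1] \to \uEnd_{Q_F}(F^0)[1]$ shifted appropriately, i.e. $\uEnd_{Q_F}(F^0)/\uEnd_{F,Q_F}(F^0)[1]$, matching the notation for cones introduced after Lemma \ref{lemma-dflag}. I expect the main obstacle to be the careful bookkeeping of the antisymmetrization map and the sign conventions: one must verify that the derivative of $f \mapsto Q_F \circ \Sym^2(f)$ at the identity is precisely $f \mapsto Q_F(f-,-) + (-1)^{|f|}Q_F(-,f-)$ and that its homotopy fibre (not just kernel) is correctly modelled by $\uEnd_{Q_F}(F^0)$ — this is where non-degeneracy of $Q_F$ is essential, since it makes $\mathrm{id}+\sigma$ a split surjection onto the symmetric part with the antisymmetric part as fibre, so the cone/fibre computation is clean. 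The compatibility with the $w_F$-factor (an isomorphism, hence contributing a contractible piece to the tangent complex) should be routine.
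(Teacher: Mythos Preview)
Your proposal is correct and follows essentially the same route as the paper: both exploit the description in Remark \ref{rk-firstdescription} of $D_n'(V,Q)$ as the homotopy fibre of $\cat{QF}ilt_n \to \cat{QP}erf$, compute $T_{(W,Q_W)}QPerf \simeq \uEnd_{Q_W}(W)[1]$ as the homotopy kernel of $e_{Q_W}(f)=Q_W(f-,-)+(-1)^{|f|}Q_W(-,f-)$ (using non-degeneracy of $Q_W$ to identify this with the antisymmetric endomorphisms), do the same for $QFilt_n$, and then take the cone of the inclusion. The only cosmetic difference is that the paper computes $T\,QPerf$ by explicitly running through the pullback square defining $\cat{QP}erf'$ (via $Perf^{\Delta^1}$ and To\"en--Vaqui\'e's $T_D\cat M = \uEnd(D)[1]$), whereas you differentiate the mapping-space formula from Lemma \ref{lemma-perfbilinear} directly; these amount to the same computation.
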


\begin{proof}
We obtain this result by computing the tangent complexes of $QPerf$ and $QFilt_{n}$ and appealing to the description of $D'_{n}(V,Q)$ as their homotopy fibre, cf.\ Remark \ref{rk-firstdescription}.

As $QPerf = \holim(Perf \to Perf \times Perf \leftarrow Perf^{\Delta^{1}})$ and the tangent space commutes with limits we have the following pullback diagram:

\[\bfig
\Square(0,0)[T_{Q: \Sym^{2}V \to \cat O}QPerf`T_{Q}Perf^{\Delta^{1}}`T_{V}Perf`T_{\Sym^{2}V, \cat O}(Perf \times Perf);```(d\Sym^{2}, 0)]
\efig\]

Let us analyse the ingredients. We recall that all our stacks come from sheaves of dg-categories and then we use Corollary 3.17 from \cite{Toen07}
which tells us that to compute the tangent space of the moduli stack of objects of a triangulated dg-category of finite type it suffices to compute the hom-spaces, i.e. $T_{D}(\cat M) = \uEnd(D)[1]$. 
This applies since we can consider $Perf$ and $Perf^{\Delta^{1}}$ as the moduli stack of objects of a dg-category of finite type. 

Thus it follows that 
\begin{eqnarray*}
T_{V}Perf &=& \uEnd(V)[1] \\
T_{\Sym^{2}V, \cat O}(Perf \times Perf)  &=& \left(\uEnd(\Sym^{2}V)\oplus \uEnd(A)\right)[1]\\
T_{f:A \to B}(Perf^{\Delta^{1}}) &=& \left(\uEnd (A) \oplus \uEnd(B) \oplus \uHom(A,B)[1], \Delta\right) [1] 
\end{eqnarray*}
Here the last complex has differential $\Delta: (x,y,h) \mapsto (dx,dy, dh + gx - yf)$.
This follows from considering hom-spaces in the category $\cat Perf^{\Delta^{1}}$. 

Now to consider the map $d\Sym^{2}$ we have to change to simplicial categories. It is clear that the induced map on derivations sends $f$ to $f\otimes \id + (-1)^{|f|}\id \otimes f$. 

Putting this together we obtain that the tangent space at $(V, Q)$ is the homotopy kernel of the map $e_{Q}: \uEnd(V)\to \uHom(\Sym^{2}V, A[2n])$ defined by $e_{Q}(f)\coloneqq Q(f-,-)+(-1)^{|f|}Q(-,f-)$. We write the kernel as $\uEnd_{Q}(V)$. Over $\C$ the complexes are formal and as $Q$ is non-degenerate $e_{Q}$ is surjective, so these are just endomorphisms that are anti-symmetric with respect to $Q$.

We proceed similarly for $QFilt$ as defined in diagram (\ref{diagram-qfiltpullback}). We use Corollary 3.17 from \cite{Toen07} again to see that the tangent space of $\cat Filt_{n}(A)$ at $F^{\bullet}$ is given by endomorphisms of $F^{0}$ preserving the filtration.
Then $d\cat S$ sends a morphism $f$ respecting the filtration $F$ to $(f\otimes \id + (-1)^{|f|}\id \otimes f)$, which respects the filtration $\cat S_{F} \subset \Sym^{2}F^{0}$.
Thus we find that $T_{F^{*}, Q_{F}}QFilt$ 
consists of filtered endomorphisms of $F^{0}$ that are compatible with $Q_{F}$.

Then $T_{(F^{*}, w_{F}, Q_{F})}D$ is the homotopy equalizer of the natural inclusion and the zero map. (We use the identification of the tangent spaces to $QPerf$ at $(V\otimes A, Q)$ and $(F^{0}, Q_{F})$.) We conclude by noting that $\uEnd_{F,Q}(F^{0})\to \uEnd_{Q}(F^{0})$ is level-wise surjective.
\end{proof}

This is entirely analogous to the tangent complex of the derived flag variety, which is $(\uEnd(V)/\uEnd_{F}(V))[1]$. We also note 
that the zeroth cohomology group consists of orthogonal endomorphisms of $V$ modulo filtered orthogonal endomorphisms of V. 
This is of course the tangent space in the underived case. 
\begin{rk}
Equivalently we can of course write the tangent complex as the shifted cone on the map 
\[
q: \uEnd V/\uEnd_{F}V \to \uHom(\Sym^{2}V,A[2n])/\uHom(\Sym^{2}V/{\cat S_{F}}, A[2n])
\] 
that is induced by $Q$. 
\end{rk}

As the tangent complex and thus the cotangent complex is perfect and as $\pi_{0}\pi^{0}(D)$ is a scheme of finite presentation we deduce:
\begin{cor}\label{cor-lfp}
The derived period stack $D_{n}(V,Q)$ is locally almost of finite presentation.
\end{cor}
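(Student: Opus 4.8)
The plan is to apply the criterion recorded in Section~\ref{sect-background}, a consequence of Theorem~8.4.3.18 of \cite{Lurie11}: a derived stack whose cotangent complex is perfect and whose underived truncation is finitely presented is locally of finite presentation. So I would verify these two hypotheses for $D = D_n(V,Q)$.

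For perfectness of the cotangent complex, the input is Proposition~\ref{propn-tangent}, which identifies the tangent complex at any $\C$-point $(F^{\bullet}, w_{F}, Q_{F})$ with $\uEnd_{Q_{F}}(F^{0})/\uEnd_{F,Q_{F}}(F^{0})[1]$, a shift of a cone on a map between subcomplexes of $\uHom(F^{0},F^{0})$. Since $F^{0}$ is perfect, these $\uHom$-complexes are perfect and so is the cone, with Tor-amplitude bounded uniformly in terms of the amplitude of $V$. To upgrade this pointwise statement to perfectness of the relative cotangent complex $\mathbb{L}_{D}$ as a quasi-coherent complex, I would use that $D$ is geometric of bounded geometricity (Theorem~\ref{thm-periodstack}) and that, by Remark~\ref{rk-firstdescription}, $D$ is an open substack of the homotopy pullback $\cat D' = \holim(QFilt_{n} \rightrightarrows QPerf)$: the cotangent complex of a homotopy pullback is assembled from those of the factors, each of $QPerf$ and $QFilt_{n}$ has a perfect cotangent complex (being a homotopy limit of copies of $Perf$, which is locally of finite presentation with $T_{V}Perf = \uEnd(V)[1]$ perfect), and open immersions are \'etale and hence do not change the cotangent complex. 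Therefore $\mathbb{L}_{D}$ is perfect.

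For finite presentation of the truncation, the same homotopy-pullback description shows that $\pi^{0}(D)$ is an open substack of a homotopy pullback of $\pi^{0}(QFilt_{n})$ and $\pi^{0}(QPerf)$, all of which are locally of finite presentation because $Perf$ and $Filt_{n}$ are (as recorded before Lemma~\ref{lemma-dflag}); hence $\pi^{0}(D)$ is locally of finite presentation as an underived stack. Concretely, by Theorem~\ref{thm-periodstack} its double truncation $\pi_{0}\pi^{0}(D)$ is the closure of the classical period domain, a closed subscheme of a flag variety and so of finite type over $\C$. Feeding these two facts into the criterion yields the corollary.

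I expect the only genuine subtlety to be the passage from the pointwise tangent computation of Proposition~\ref{propn-tangent} to perfectness of the global cotangent complex: one needs both that $D$ is $k$-geometric for an explicit $k$, so that $\mathbb{L}_{D}$ has globally bounded Tor-amplitude, and that perfectness is inherited through the homotopy pullbacks and the open immersion used in the construction. All the required inputs are already available from the constructions of $DFlag_{n}$, $QPerf$ and $QFilt_{n}$, so this is bookkeeping rather than a real obstacle.
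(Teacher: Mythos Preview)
Your proposal is correct and follows essentially the same approach as the paper: the paper's proof is the one-line remark preceding the corollary, namely that the tangent (hence cotangent) complex is perfect by Proposition~\ref{propn-tangent} and that $\pi_{0}\pi^{0}(D)$ is a scheme of finite presentation, so the criterion from Section~\ref{sect-background} applies. Your write-up simply fills in the details the paper leaves implicit, in particular the passage from the pointwise tangent computation to global perfectness via the homotopy-pullback description; this is helpful elaboration rather than a different route.
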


\subsection{The derived period domain II: Analytically}\label{sect-anadomain}

As the period map and the second Hodge-Riemann bilinear relation are analytic by nature we need to understand the derived analytic period domain. 

To add the Hodge-Riemann bilinear relations we need some extra structure, so we recall that $V = H(X_{s}, \C)$ is equipped with a real structure and the Lefschetz operator $L$. We define the bilinear forms $Q_{k}$ on $H^{k}V$ by $Q_{k}(\alpha, \beta) = Q(L^{n-k}\alpha,\beta)$. 

\begin{thm}\label{thm-perioddomain}
There is a \emph{derived analytic period domain} $U$, an open analytic substack of $D_{n}(V,Q)^{an}$, which is an enhancement of the period domain. 
\end{thm}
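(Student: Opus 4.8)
The plan is to reduce Theorem~\ref{thm-perioddomain} to the algebraic statement of Theorem~\ref{thm-periodstack} together with Lemma~\ref{lemma-open}. First I would recall that $D = D_n(V,Q)$ is a geometric derived algebraic stack (Theorem~\ref{thm-periodstack}) which is locally of finite presentation (Corollary~\ref{cor-lfp}); hence by the properties of analytification collected in Section~\ref{sect-dan} the analytification $D^{an}$ exists as a geometric derived analytic Artin stack, and analytification commutes with the truncations $\pi^0$ and $\pi_0$. Therefore $\pi_0\pi^0(D^{an}) = (\pi_0\pi^0 D)^{an}$, which by Theorem~\ref{thm-periodstack} is the analytification of the closure of the classical period domain, i.e.\ the closed subvariety of the flag variety cut out by Hodge--Riemann condition~(1).

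The key step is then to identify the correct open locus. Recall from Section~\ref{sect-classicaldomain} that the classical period domain is the open subset of this closed subvariety defined by Hodge--Riemann conditions~(2) and~(3), and that these conditions are analytically open. Since $V = H(X_s,\C)$ carries its real (indeed rational) structure and its Lefschetz operator $L$, the forms $Q_k$ on $H^kV$ are defined, and conditions~(2) and~(3) make sense pointwise on $\pi_0\pi^0(D^{an})$; they define an open complex-analytic subspace $U_0 \subset \pi_0\pi^0(D^{an})$ which is precisely the classical period domain. Now I would invoke Lemma~\ref{lemma-open} in its analytic form: given the geometric derived analytic stack $D^{an}$ and the open substack $U_0$ of its underived truncation $\pi^0(D^{an})$ — note $U_0$ is open in $\pi_0\pi^0(D^{an})$, hence pulls back to an open substack of $\pi^0(D^{an})$ — there is a unique geometric derived analytic stack $U$ with $\pi^0(U) = $ (this open substack) that is an open substack of $D^{an}$. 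Applying $\pi_0$ gives $\pi_0\pi^0(U) = U_0$, which is exactly the classical period domain, so $U$ is an enhancement of it in the sense fixed in Section~\ref{sect-background}.

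The one subtlety to address carefully is that Lemma~\ref{lemma-open} is stated for an open substack $U$ of $\pi^0(\mathcal X)$, whereas positivity is most naturally an open condition on $\pi_0\pi^0(D^{an})$; I would note that since the reduction map $\pi^0(D^{an}) \to \pi_0\pi^0(D^{an})$ is a homeomorphism on underlying topological spaces (it is surjective with nilpotent kernel, cf.\ the discussion of $t$-structures on coherent sheaves in Section~\ref{sect-dan}), an open subspace of the target corresponds to a unique open substack of the source, so there is no loss. I would also remark that the underlying topological space of $U$ is genuinely a topological space (not merely an $\infty$-topos), since $D^{an}$ has an underlying topological space by the discussion preceding Corollary~\ref{cor-globalperiodstack}, and openness preserves this. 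The main obstacle is purely bookkeeping: checking that conditions~(2) and~(3), which in the classical setting are phrased in terms of the Hodge decomposition on honest vector spaces, transport correctly to the cohomology groups $H^j(F^i \otimes_{\cat O} H^0(\cat O))$ appearing in Corollary~\ref{cor-globalperiodstack} — but by the formality observation at the end of the proof of Theorem~\ref{thm-periodstack}, on $\C$-points every object of $D(\C)$ is equivalent to a genuine filtration on the vector space $V$ with the form $Q$, so the classical formulation applies verbatim and no new analysis is required.
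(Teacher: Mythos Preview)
Your proposal is correct and follows essentially the same approach as the paper: analytify $D$ using Corollary~\ref{cor-lfp}, use that analytification commutes with $\pi_0\pi^0$ to identify the truncation with the closed period domain, and then invoke Lemma~\ref{lemma-open} to lift the open locus cut out by Hodge--Riemann conditions (2) and (3) from the truncation to an open derived substack $U$. Your treatment of the $\pi^0$ versus $\pi_0\pi^0$ subtlety is exactly what the paper does (taking the preimage under $\pi^0(D^{an}) \to \pi_0\pi^0(D^{an})$), and the extra remarks on formality and underlying topological spaces are harmless elaborations.
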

\begin{proof}
We first apply the Lurie-Porta analytification functor reviewed in Section \ref{sect-dan} to the geometric stack $D$. This is possible as $D$ is locally almost of finite presentation by Corollary \ref{cor-lfp}.

We need to check that the underived truncation $\pi_{0}\pi^{0}(D^{an})$ is the closed analytic period domain, but this is immediate from Theorem \ref{thm-periodstack} since
analytification commutes with truncation.

We note that orthogonality with respect to $Q_{k}$ follows from orthogonality with respect to $Q$ as in the definition of $D_{n}(V,Q)$

Now we construct the open substack $U$ of $D^{an}$.
Lemma \ref{lemma-open}
 shows that it suffices to construct an open substack $U'$ on the underlying underived stack. We take $U'$ to be the preimage under the natural map
 $\pi^0(D^{an}) \to \pi_{0}\pi^{0}(D^{an})$ of the classical period domain that is defined inside $\pi_{0}\pi^{0}(D^{an}) = (\pi_{0}\pi^{0}(D))^{an}$ by the second and third Hodge-Riemann bilinear relation. Explicitly, it is the open subspace given by the conditions that $F^{i}H^{k}\oplus \overline {F^{k-i+1}H^{k}} = H^{k}$ and the form given by $(-1)^{k(k-1)/2}i^{p-q}Q_{k}(\cdot, \bar \cdot)$ is positive definite. 
 \end{proof}
\begin{rk}
As $U$ is open in $D^{an}$ it will have the same tangent complex. Moreover, the tangent complex is unchanged by analytification, see the discussion in Section \ref{sect-derivative}.
\end{rk}

Let us now consider how we will construct a map into this derived period domain.

The
period map is naturally a map into a moduli space, however for technical reasons we have now constructed the derived period domain as an analytification of an algebraic moduli stack, rather than as an analytic moduli stack.

Thus we need to consider the following question:
Given the algebraic moduli stack $D$, what can we say about $D^{an}$? It seems too optimistic to expect an analytification of a moduli stack to always represent some corresponding analytic moduli problem. In the setting of this paper we directly construct a map to the analytified derived period domain.

There are several interesting instances where the analytification of a moduli space is the corresponding analytic moduli space. 
For example, in the underived setting one may observe that the analytification of the Grassmannian is an analytic Grassmannian. One way to see this is that there are matching explicit constructions. 

In fact, something very similar holds in our situation and $U$ turns out to be the derived analytic moduli stack of filtrations of $V\otimes \cat O$ by perfect complexes that satisfy the Hodge-Riemann bilinear relations. 

We prove this result in joint work with Mauro Porta as an application of the main theorem of \cite{HolsteinF}, which says
that in many cases the analytification functor commutes with the mapping space functor.

\subsection{Monodromy and quotient}\label{sect-monodromy}
The period map only maps to the quotient of the period domain by monodromy, so we examine monodromy for families of algebraic varieties in the derived case, and then construct the quotient of the derived period domain by a group acting on $V$.

Assume we are given a smooth projective map $f: X \to S$ of derived schemes, i.e.\ a strong map of derived schemes such that $\pi_{0}\pi^{0}(f)$ is smooth and projective. We write $t(S)$ for the underlying topological space of $S^{an}$, i.e. $\pi^{0}S(\C)$ considered in the analytic topology.

The fundamental group of $t(S)$ acts on cohomology of the fibre as each $R^{i}f_{*}\Om_{X/S}$ is a local system on $S$. 
Looking instead at the complex $Rf_{*}\C$ we see that it forms a homotopy locally constant sheaf on $t(S)$. We will see that $Rf_{*}\Om^{an}_{X/S} \simeq Rf_{*} {\set C} \otimes \cat O^{alg}_{S^{an}}$ in Lemma \ref{lemma-trivialize}.

A homotopy locally constant sheaf is equivalently a representation of the simplicial loop group $\Om t(S)$, see \cite{Holstein2}, i.e.\ a priori there is higher monodromy.
But the degeneration of the Leray-Serre spectral sequence for families of projective varieties suggests that $Rf_{*}\C$ is just a direct sum of local systems, i.e.\ there is no higher monodromy.
Indeed, there is the following result due to Deligne: 

\begin{thm}\label{thm-deligne}
Consider a triangulated category $\cat D$ with heart $\cat A$. Let $n \in \set N$, $V \in D(\cat A)$ and assume there is $u: V \to V[2]$ inducing isomorphisms: 
\[u^{i}: H^{n-i}(V) \simeq H^{n+i}(V)\]
for $i \geq 0$. Then there is a canonical map $\phi$ from $HV \coloneqq \sum_{i}H^{i}(V)[-i]$ to $V$ inducing the identity on cohomology. 
\end{thm}
If $\cat D$ is the derived category of $\cat A$ the existence of the isomorphism is Theorem 1.5 in \cite{Deligne68}, but the proof does not depend on this assumption, see also the beginning of \cite{Deligne94}. The canonical map is constructed in \cite{Deligne94}.

Now we let $\cat A$ be the category of sheaves of abelian groups on $t(S)$, $X$ be $Rf_{*}{\set C}$
and $u$ be the map induced by the relative Lefschetz operator. We obtain the following (see \cite{Deligne68} 2.6.3):
 The homotopy locally constant sheaf $Rf_{*}{\set C}$ is quasi-isomorphic to a direct sum of its cohomology sheaves, which are local systems.

We also need to consider the shifted bilinear forms on $Rf_{*}\C$ and its cohomology. We may compute $Rf_{*}(\C)$ in such a way (for example using a \v Cech cover of $t(X)$ over $t(S)$) that it comes with a shifted bilinear form $Q_{R}$ induced by the pairing $\langle \alpha,\beta \rangle = \int \alpha \cup \beta$ 
given by the cup product and the trace map $\int: Rf_{*}\C[-2n] \to R^{2n}f_{*}\C\to \C$ given by Verdier duality. 
This of course induces the pairing $Q$ on cohomology.
$Q_{R}$ also pulls back via $\phi$ to a bilinear form on cohomology $H(Rf_{*}\C)$ and as $\phi$ is the identity on cohomology this is again the usual pairing on cohomology. Thus we have a quasi-isomorphism $\phi: (H(Rf_{*}\C), Q) \to (Rf_{*}\C, Q_{R})$ of complexes of sheaves with shifted bilinear forms.
 
 We deduce:
 
\begin{cor}\label{cor-formality}
The complex $Rf_{*}{\set C}$ with its bilinear form
considered as a $\Om t(S)$-representation is equivalent to a representation of $\pi_{1}(t(S))$. 
\end{cor}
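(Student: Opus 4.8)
The plan is to translate the formality statement established just above into the language of loop-group representations. First I would recall, following \cite{Holstein2}, that a homotopy locally constant sheaf on $t(S)$ is the same datum as a representation of the simplicial loop group $\Om t(S)$, that this equivalence respects tensor products, direct sums and shifted bilinear forms, and that inflation along the truncation map $\Om t(S) \to \pi_0(\Om t(S)) = \pi_1(t(S))$ identifies ordinary $\pi_1(t(S))$-representations with those $\Om t(S)$-representations which, up to shift, are concentrated in a single degree and carry no higher structure. So ``equivalent to a representation of $\pi_1(t(S))$'' means here: equivalent, as an $\Om t(S)$-representation, to the inflation of such a representation.

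Next I would invoke the discussion preceding the corollary. Applying Deligne's Theorem \ref{thm-deligne} to $X = Rf_{*}\C$ in the derived category of sheaves of abelian groups on $t(S)$, with $u$ the relative Lefschetz operator, shows $Rf_{*}\C \simeq \bigoplus_i R^if_{*}\C[-i]$; crucially the canonical map $\phi$ of \cite{Deligne94} realises this as a morphism of sheaves of complexes over $t(S)$ (a zig-zag of quasi-isomorphisms of sheaves), so the decomposition is $\Om t(S)$-equivariant and not merely objectwise. Each cohomology sheaf $R^if_{*}\C$ is an ordinary local system, hence a $\pi_1(t(S))$-representation; shifting by $[-i]$ and taking a finite direct sum does not change the fact that the resulting $\Om t(S)$-representation is inflated from $\pi_1(t(S))$. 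This proves the statement for the underlying complex.

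For the bilinear form I would use the refinement recorded above: $\phi$ upgrades to a quasi-isomorphism $(H(Rf_{*}\C), Q) \simeq (Rf_{*}\C, Q_R)$ of sheaves of complexes equipped with $2n$-shifted bilinear forms, again a map of sheaves over $t(S)$ and hence $\Om t(S)$-equivariant. On $\bigoplus_i R^if_{*}\C[-i]$ the form is assembled from the pairings $R^if_{*}\C \otimes R^{2n-i}f_{*}\C \to \C$, each a morphism of local systems and therefore $\pi_1(t(S))$-equivariant. Thus $(Rf_{*}\C, Q_R)$, as an $\Om t(S)$-representation with form, is equivalent to the inflation of the $\pi_1(t(S))$-representation $(\bigoplus_i R^if_{*}\C[-i], Q)$, which is the assertion.

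The only delicate point is the passage from ``objectwise formality'' to an actual equivalence of sheaves on $t(S)$, that is, the equivariance of the decomposition; this is exactly what the canonicity of Deligne's map $\phi$ in \cite{Deligne94} buys us, and once it is granted the remaining steps are purely formal bookkeeping about inflation along $\Om t(S) \to \pi_1(t(S))$.
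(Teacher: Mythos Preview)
Your proposal is correct and follows essentially the same approach as the paper. The paper presents this corollary as an immediate deduction (``We deduce:'') from the preceding discussion of Deligne's formality theorem and the compatibility of $\phi$ with the bilinear forms; you have simply made explicit the translation into the language of $\Om t(S)$-representations and the inflation along $\Om t(S) \to \pi_1(t(S))$, which the paper leaves to the reader.
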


Note that in all this the base 
 is allowed to be singular, the only assumption is that $t(S)$ is locally paracompact.

When constructing the global period map we will see (cf. Lemmas \ref{lemma-trivialize} and \ref{lemma-period})
that this monodromy is the only obstruction to gluing the local derived period maps, i.e.\ the derived structure of $S$ (which is infinitesimal) does not
interfere.
\begin{rk}
On the other hand Voisin shows in \cite{Voisin11}
that Deligne's decomposition does not work on the level of algebras. This suggests that extending the period map to keep track of the algebra structure on $Rf_{*}\Om_{X/S}$ poses a significantly harder problem. The bilinear form is defined using the algebra structure, but we have seen that preserving the bilinear form is a much weaker condition than preserving the algebra structure.
\end{rk}

\begin{rk}
One might wonder what happens if one considers the complex $Rf_{*}\Om_{X/S}$ in the case that $S$ is underived, i.e.\ if one considers a classical period map taking values in complexes.

Consider the Hodge filtration $F^{i}$ on $Rf_{*}\Om_{X/S}$. Then we note that now all the $F^{i}$ are formal, i.e.\ quasi-isomorphic to $HF^{i}$. This follows since they have locally free cohomology. (In the derived setting this argument would not apply.) 
So the algebraic geometry of $S$ is not reflected in studying Hodge structures as a complex (as opposed to a collection of cohomology groups), and the topology is not reflected beyond the fundamental group.

This means for example that failures of the Torelli theorem are not resolved by considering a period map taking values in complexes.
\end{rk}

We will now connect the monodromy considerations to our construction of $U$. We think of $V = V_{\Z}\otimes \C$ as the cohomology complex of a fibre $X_{s}$ of our map $X \to S$ of derived schemes. (Note $X_{s}$ is a classical smooth projective scheme.)
Then we have a strict action of the fundamental group $\pi_{1}(S)$ on $V$ compatible with the form $Q$.
In fact this action factors through the action of the universal arithmetic group $\Gamma = \operatorname{Aut}(V_{\Z},Q)$ on $V$ with the bilinear form $Q$.

We will now show that the action of $\Gamma$ on $V$ induces an action on the derived period domain.

\begin{propn}\label{propn-quotientspace}
 The action of $\Gamma$ on $V$ induces an action on $D_{n}(V,Q)$ which restricts to an action on the derived period domain $U$ and the homotopy quotient $U/\Gamma$ exists as a geometric derived analytic stack. 
\end{propn}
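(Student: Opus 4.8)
The plan is to produce the action of $\Gamma$ on $D_n(V,Q)$ functorially from the action on the pair $(V,Q)$, then check that the derived period domain $U$ is preserved, and finally invoke the general machinery for forming quotients of geometric derived analytic stacks by properly discontinuous arithmetic group actions. First I would observe that $\Gamma = \operatorname{Aut}(V_\Z, Q)$ acts on the pair $(V, Q)$ through automorphisms of perfect complexes with shifted bilinear form; concretely, an element $\gamma \in \Gamma$ gives an automorphism of $V \otimes A$ compatible with $Q$ for every dg-algebra $A$, hence an autoequivalence of the simplicial category $\cat D_n(V,Q)(A)$ sending a triple $(F^{*}, w_F, Q_F)$ to $(F^{*}, \gamma \circ w_F, Q_F)$ — only the rigidification $w_F$ is twisted, since the filtration and the form on $F^0$ are untouched. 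This is manifestly compatible with morphisms and with base change in $A$, so applying $N_W$ gives an action of the discrete group $\Gamma$ on the geometric derived stack $D = D_n(V,Q)$; equivalently, one produces a map $B\Gamma \to \mathrm{Aut}(D)$, or a functor from $\Gamma$ (as a one-object groupoid) to derived stacks. Since $\Gamma$ is discrete, strictness here is not an issue, so I would not need coherence data beyond the evident one.

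Next I would verify that this action restricts to $D^{an}$ and preserves the open substack $U$. Analytification is functorial, so the $\Gamma$-action transports to $D^{an}$; by Lemma \ref{lemma-open} an open substack of $D^{an}$ is determined by an open substack of $\pi^0(D^{an})$, and the $\Gamma$-action on $D^{an}$ induces one on $\pi^0 \pi^0(D^{an}) = (\pi_0\pi^0(D))^{an}$, which is the classical closed period domain. Here I would simply recall the classical fact that $\Gamma = \operatorname{Aut}(V_\Z, Q)$ preserves the open conditions defining the period domain inside its closure: the second and third Hodge–Riemann bilinear relations are formulated purely in terms of $Q$, the real (indeed rational) structure on $V$, and the complex conjugation, all of which $\gamma \in \Gamma$ respects because it is defined over $\Z$ and commutes with $Q$. (One should be a little careful that $\gamma$ does not commute with the Lefschetz operator $L$ in general; but the forms $Q_k$ and the positivity conditions only involve $L$ through $Q$ in a way that is $\gamma$-equivariant once one tracks that $\gamma$ preserves the primitive decomposition — this is standard, cf. the setup in Section \ref{sect-classicaldomain}.) Hence the open substack $U' \subset \pi^0(D^{an})$ is $\Gamma$-stable, and therefore so is $U$.

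Finally I would construct the quotient $U/\Gamma$ as a geometric derived analytic Artin stack. The underived truncation $\pi_0\pi^0(U)$ is the classical period domain, on which $\Gamma$ — or rather the relevant arithmetic subgroup — acts properly discontinuously (recalled at the end of Section \ref{sect-classicaldomain}); so $\pi_0\pi^0(U)/\Gamma$ exists as a classical analytic space (or Deligne–Mumford analytic stack if there are stabilizers). Since $dStein$ with smooth morphisms is a geometric context in the sense of Porta–Yu, the stack-theoretic quotient $[U/\Gamma]$ exists as a simplicial sheaf on $dStein$, and it is geometric because $U$ is geometric and the $\Gamma$-action is by a discrete group: the quotient map $U \to [U/\Gamma]$ is a $\Gamma$-cover, hence smooth (indeed étale), exhibiting $[U/\Gamma]$ as a geometric derived analytic stack of geometricity level one more than that of $U$. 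I would also note $\pi^0([U/\Gamma]) = [\pi^0(U)/\Gamma]$ since $\pi^0$ commutes with the relevant colimits and with analytification, so this is genuinely a derived enhancement of the classical quotient.

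The main obstacle I anticipate is the last step: making precise that the naive stacky quotient $[U/\Gamma]$ is \emph{geometric} as a derived analytic stack, rather than merely a simplicial sheaf, and that it has an underlying honest analytic space/DM stack. In the algebraic setting this is routine, but derived analytic geometry is less developed, so I would need to appeal carefully to the Porta–Yu framework (\cite{Porta14}, Section 8 of \cite{Porta15a}) — specifically, that a quotient of a geometric stack by a properly discontinuous action of a discrete group through smooth (étale) maps is again geometric — and to properness of the $\Gamma$-action on the classical level to ensure the truncation behaves well. If $\Gamma$ itself acts with fixed points one gets a DM stack rather than a space, which is fine for our purposes but should be stated; the cleaner route, which I would take, is to phrase everything in terms of the stack quotient $[U/\Gamma]$ throughout and only remark that its coarse space recovers Griffiths' $D/\Gamma$.
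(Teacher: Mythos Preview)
Your proposal is correct in outline but differs from the paper in two respects worth flagging.

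First, the paper does not describe the $\Gamma$-action pointwise as you do. Instead it constructs the quotient moduli-theoretically: the classifying map $\gamma\colon B\Gamma \to QPerf$ for the representation $(V,Q)$ is pulled back along $QFilt \to QPerf$ to give a stack $D_\Gamma$, and one checks that the further pullback along $* \to B\Gamma$ recovers $D$ (cf.\ Remark~\ref{rk-firstdescription}), exhibiting $D_\Gamma$ as the quotient. This is not merely cosmetic: the pullback presentation is precisely what drives the construction of the global period map later (Proposition~\ref{propn-periodsschemes}, Theorem~\ref{mainA}), where a map into $D_\Gamma$ is assembled from compatible maps to $QFilt$ and to $B\Gamma$. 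Your equivalent direct description would need to be reconnected to that framework.

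Second, your geometricity argument diverges from the paper's and is under-justified as stated. You claim $[U/\Gamma]$ is geometric simply because $U \to [U/\Gamma]$ is a $\Gamma$-torsor for a discrete group, with proper discontinuity relegated to a remark about the coarse space. The paper takes the opposite view: it explicitly does \emph{not} expect $B\Gamma$ or $D_\Gamma$ to be geometric for infinite discrete $\Gamma$, and uses proper discontinuity essentially---choosing an open cover of the underlying underived space whose members meet only finitely many $\Gamma$-translates, so that $U/\Gamma$ is locally a quotient by a \emph{finite} group and hence unambiguously geometric. Your direct atlas argument may well go through, but it hinges on whether a $\Gamma$-torsor with infinite discrete fibre is smooth in the technical sense of the Porta--Yu geometric context and whether infinite disjoint unions stay within the class of geometric stacks; you should either verify this carefully or adopt the paper's local-finite-quotient argument.
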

\begin{proof}
As $V$ comes with a $\Ga$-action there is a map $B\Ga \to Perf$ classifying this representation and $V: * \to Perf$ factors through it. 
Note that the simplicial set $B\Ga$ can be considered as a constant stack (i.e. we stackify the constant functor $B\Ga$ on derived schemes). We do not expect this to be geometric or locally almost of finite presentation as $\Ga$ is an infinite discrete group.
As the action is compatible with $Q$ we can consider the map $\gamma: B\Ga \to QPerf$ classifying the pair $(V,Q)$.

Then the pullback of $QFilt \to QPerf$ along $\gamma$ is a derived stack $D_{\Gamma}$. As the homotopy pullback of $D_{\Gamma}\to B\Gamma$ along the universal cover $* \to B\Ga$ is equivalent to $D$ (see Remark \ref{rk-firstdescription}) we see that $D_{\Gamma}$ is the desired quotient. 
Explicitly, $\Ga$ acts on $D_{h} \coloneqq D_{\Ga}\times_{B\Ga}E\Ga $ via its action on $E\Ga$, and of course $D_{h} \simeq D$. Analytification preserves effective epimorphisms 
as it is defined as a left Kan extension along a continuous morphisms of sites. It also commutes with homotopy limits and thus we can consider $D_{\Ga}^{an}$ as a quotient of $D^{an}$. 

The underlying underived stack is not necessarily geometric, but we will now construct a geometric homotopy quotient  of $U$ by $\Gamma$. 

As the non-degeneracy condition defining $U$ (see Theorem \ref{thm-perioddomain}) is unaffected by $\Gamma$ it defines a substack $U/\Gamma$ of $D^{an}_{\Gamma} \simeq D^{an}_{h}/\Gamma$. We also note that the open substack $U\subset D^{an}$ corresponds to an open substack $U_{h}$ of $D_{h}^{an}$, which is preserved by the action of $\Gamma$, as this is true for the underlying underived spaces where $\Gamma$ is the usual monodromy action.
$U/\Gamma$ is equivalent to the homotopy quotient $U_{h}/\Gamma$, justifying the name.
Now we use the fact that the action of $\Gamma$ on $\pi_{0}\pi^{0}(U_{h})$ (which is just the usual period domain) is properly discontinuous to show that $U/\Gamma$ is geometric.  
 
There is an open cover $\mathfrak U$ of the period domain $\pi_{0}\pi^{0}(U^{h})$ such that the images of any two open sets under the group action intersect only finitely many times. This gives a cover of the stack $\pi^{0}(U^{h})$ and thus of the derived stack $U^{h}$ with the same property. 
Thus $U_{h}$ provides a cover of $U_{h}/\Gamma$ that is locally a quotient by a finite group, which is enough to show $U/\Ga$ is geometric. 
\end{proof}

\section{The period map}\label{sect-map}

\subsection{The classical period map} 
In this section, which is only used as background, we recapitulate Griffiths' period map \cite{Griffiths70}.

For a polarized family of smooth projective varieties $f: X \to S$, where $S$ of finite type, there are holomorphic maps $P_{k}: S^{an} \to U/\Ga$, where $U \subset Flag(F^{*},H^k)$ is the polarized period domain and $\Ga$ is the  monodromy group. Here $P_{k}(s)$ is $F^{n}H^{k} \subset \dots \subset F^{1}H^{k}(X_{s}) \subset H^{k}(X_{s}) \cong H^{k}(X_{0})$.

We define $P(s)$ as the product of the Hodge filtrations on $H^{k}(X_{s})$ for all $k$.
 
We give some details: $\Om^{1}_{X/S}$ is a vector bundle as $f$ is assumed smooth. Then the relative de Rham complex is given by exterior powers, with a differential which is not $\cat O_{X}$-linear, 
hence this is not a complex of coherent sheaves. However, $d$ is $f^{-1}\cat O_{S}$-linear and thus we have a complex of $f^{-1}\cat O_{S}$-modules, which pushes forward along $f$ to a complex of $\cat O_{S}$-modules.

Note that Deligne shows that the relative de Rham complex $\Om^{an}_{X/S}$ is quasi-isomorphic to $f^{-1}\cat O_{S^{an}}\otimes \C$ and uses this to show that $Rf_{*}\Om^{an}_{X/S} \simeq Rf_{*}(\C) \otimes \cat O_{S^{an}}$, see Proposition 2.28 of \cite{Deligne70}. This works if the base is any analytic space, and if $f$ is just assumed smooth and separated.

By Ehresmann's theorem we have diffeomorphisms $X_{s} \to X_{0}$ for every path from $0$ to $s$, showing the cohomology sheaves are locally constant and if $S^{an}$ is simply connected $R^{i}f_{*}\Om^{an}_{X/S}$ becomes $H^{i}(X_{s},\C)\otimes \cat O_{S^{an}}$.

The stupid truncation $\cat F^{p} = \Om_{X/S}^{\geq p}$ is a subcomplex of sheaves of $f^{-1}\cat O_{S^{an}}$-modules. 
One then shows that the $R^{i}f_{*}\Om^{an}_{X/S}$ are vector bundles (and in particular coherent sheaves). This implies that the $R^{i}f_{*}\cat F^{p} = R^{i}f_{*}((\Om^{an}_{X/S})^{\geq p})$ are complexes of coherent sheaves. This can be done by trivialising the fibration locally and using Grauert's base change theorem, see \cite{Peters08}.

The $R^{i}f_{*}\cat F^{p}$ are moreover subsheaves of the $R^{i}f_{*}\Om^{an}_{X/S}$.
This follows from degeneration of the Fr\"ohlicher spectral sequence as the differentials in the $R^{i}f_{*}$ long exact sequence coming from $\Om^{\geq p}\to \Om^{\geq p-1} \to \Om^{p-1}$ vanish. Degeneration of the spectral sequence follows from a dimension count.

Thus the map $P_{k}$ sending $S^{an}$ to the filtration $\{R^{k}f_{*}\cat F^{i}\}$ of $H^{k}(\Om^{an}_{X_{0}})$ is the desired map to the flag variety and it factors through the period domain. 

Finally, we can globalize the construction by dividing out by the action of the fundamental group of $S^{an}$. We use Ehresmann's theorem again to pull all our data along any path in the base in a homotopy invariant way, giving an action of the fundamental group of $S^{an}$.

\begin{rk}
These results are often stated for $S$ smooth, but we need not assume this. Deligne's $\Om^{an}_{X/S} \simeq f^{-1} \cat O_{S^{an}}\otimes \C$ holds over an analytic space, as do Grauert's theorem (see \cite{Grauert60}),
and Ehresmann's theorem (see Demailly's chapter in \cite{Bertin02}). 

Grauert's theorem assumes that $S$ is reduced (there is a version for a non-reduced base). But we may deal with non-reduced bases as part of our proof for the derived Poincar\'e Lemma, see Lemma \ref{lemma-poincare2} below.
\end{rk}

The map $P$ is a priori not algebraic. In fact, to construct the map we needed to locally trivialize $Rf_{*}\set C \otimes \cat O_{S}$.
In the analytic setting we can do this over any contractible set, so we just need $S^{an}$ to be locally contractible. In the algebraic setting this is typically impossible unless $S$ is itself simply connected.

Finally, let us have a look at the differential. Assume for simplicity that $S$ is smooth.

The differential of the period map $dP_{p,k}$ factors through the Kodaira-Spencer map $B \to H^{1}(X, \cat T_{X})$.
In fact $dP$ is the composition of Kodaira-Spencer with the map
\[H^{1}(X, \cat T_{X}) \to \Hom(F^{p}H^{k}(X), H^{k}(X)/F^{p}H^{k})\]
which is given by composing the natural cup product and contraction map with the natural quotient and 
inclusion maps: $F^{p}H^{k} \to H^{p,k-p}(X) \to H^{p+1,k-p-1}(X) \to H^{k}/F^{p}H^{k}(X)$.

The period map satisfies Griffiths transversality, i.e.\ the image of the differential is contained in the subspace $F^{p-1}H^{k}(X)/F^{p}H^{k}(X)$. 
This can also be expressed as saying the connection on $Rf_{*}\Om^{*}_{X/S}$ maps $F^{i}$ to $F^{i-1}\otimes_{\cat O_{S}} \Om^{1}_{S}$.

\subsection{The local derived period map I: The de Rham complex}\label{sect-localperiod-1}
We now define the derived version of the period map. In this section we work with derived schemes modelled on simplicial commutative algebras. 

We will consider a polarized smooth projective map $f: X \to S$ between geometric derived stacks. We fix the fibre $X_{s}$ over some distinguished point $s$. This is a smooth projective variety. 
We assume that $S$ is of finite presentation.

\begin{rk}
We need our base stack to be locally almost of finite presentation to use analytification, and we need that it is quasi-compact to globalise our construction in Theorem \ref{mainA}.
\end{rk}

By definition the map $f$ is smooth and projective if it representable, 
$\pi^{0}(f)$ is a smooth and projective map of algebraic stacks and $f$ is strong, i.e. $\pi_{i}(\cat O_{X}) \cong \pi_{i}(\cat O_{S}) \otimes_{\pi_{0}(\cat O_{S})} \pi_{0}(\cat O_{X})$. Recall that smoothness of algebraic stacks can be defined locally as in Definition 1.3.6.2 of \cite{Toen05a}.
By polarized we just mean that $\pi^{0}(f)$ is polarized.

We will first consider the case that $S$ is a quasi-separated derived scheme. We note that the definition of a derived scheme $X$ includes that it has an underlying topological space, the same is true for its analytification. In both cases we denote the underlying topological space with the analytic topology by $t(-)$.

We will construct a map from open pieces of the analytification $S^{an}$ of $S$ into the derived period domain $U \subset D_{n}(V,Q)^{an}$, constructed in Theorem \ref{thm-perioddomain}.
Explicitly, we take $V$ to be the complex $H^{*}(X_{s}, \Om_{X_{s}})$ with zero differentials. 
For $Q$ we take the usual shifted bilinear form defined using product and trace.

As we can only hope to construct the map locally we will now consider smooth maps $i_{T}: T \to S^{an}$ such that $\pi^{0}(T)$ is a contractible Stein space. 
Locally this is always possible as every point in a complex space has a contractible Stein neighbourhood, see e.g.\ Theorem 3.1 in \cite{Mihalache96}.
We may also pick these neighbourhoods to be relatively compact in $S^{an}$ (i.e.\ with compact closure). 
We write $j_{T}:X_{T} \to X^{an}$ for the pullback of $X^{an}$ to $T$. This is in particular a smooth map. 

More generally, if $S$ is a stack we pick a good hypercover $T_{\bullet}$ of $S^{an}$, 
i.e.\ an effective epimorphism $T_{\bullet} \to S^{an}$ such such that all $t(T_{n})$ are disjoint unions of contractible spaces. This is possible by picking an affine hypercover first, analytifying and then using again that any Stein space is covered by contractible Stein spaces. Using Proposition 7.2.1.14 in \cite{Lurie11e} we may check that the map is an effective epimorphism by checking the truncated map is an epimorphism.
Then we have $S^{an} \simeq \hocolim_{I} T_{i}$ by Lemma 3.6 in \cite{Porta15}. 

We will take care to perform our constructions compatibly with base change so that we can glue together the derived period map in Section \ref{sect-globalperiod}.

So let us consider the local situation. We will often consider the affinization of $T$, i.e.\ the derived affine scheme $R\Spec(\cat O^{alg}(T))$ and denote it by $uT$.

Abusing notation we will use $f$ also for $f^{an}$ and $f^{an}|_{X_{T}}$ when there is no ambiguity.

Now by Theorem \ref{thm-periodstack} 
one way to define a map $T \to U$ would be to have a map $S \to D_{n}(V,Q)$ such that its analytification satisfies the positivity conditions. Of course as long as $S$ is algebraic this is not realistic, but we will construct a map $S \to QFilt$ as one ingredient in our construction.

In what follows we need to explicitly consider coherent sheaves on derived analytic spaces like $T$. Recall from Section \ref{sect-dan} that these are just $\cat O_{T}^{alg}$-modules such that the cohomology sheaves are locally coherent over $\pi_{0}(\cat O_{T}^{alg})$.
Moreover the underlying $\oo$-topos is the $\oo$-topos of $t(T)$. To compute derived functors we use the local model structure on presheaves. On stacks we use homotopy cartesian sections for a cover as described in Section \ref{sect-alggrass}.

In particular we can apply the usual direct and inverse image functors between sheaves of complexes, on $t(T)$. There is extra structure on the space of functions on a derived analytic stack, but it will not affect the constructions we need.

The crucial ingredient for Hodge theory is the relative de Rham complex.
This is defined as the exterior algebra of the relative cotangent complex (which is a simplicial sheaf). 

The relative cotangent complex should be available in every good theory of derived analytic spaces and treatment of the analytic cotangent complex can now be found in \cite{Porta16}. For our purposes we will avoid the analytic theory and just use the analytification of the algebraic cotangent complex. (At least for Deligne-Mumford stacks this agrees with the analytic cotangent complex, see Theorem 5.20 of \cite{Porta16}.)

Thus in our set-up we may use the analytification of the relative algebraic cotangent complex. As $f$ is smooth we think of this as the complex of K\"ahler differentials. So we are considering $\Om^{1}_{X/S} = \Om^{1}_{X}/f^{*}\Om^{1}_{S}$ and restrict to $X^{an}_{T}$; we define $\Om^{1}_{X/T} \coloneqq j_{T}^{*}((\Om_{X/S}^{1})^{an}) = j_{T}^{*}h_{X}^{*}\Om_{X/S}^{1}$ for any inclusion $j_{T}: X_{T}^{an} \to X^{an}$ where $h_{X}$ is the natural map $X^{an}\to X$.

For definitions of the algebraic cotangent complex $
\set L$, see \cite{Toen05a}. We recall some basic local facts:
Let $f^{*}: A \to B$ be a map of simplicial algebras. The relative cotangent complex $\set L_{B/A}$ fits into an exact sequence  $B \otimes^L_{A} \set L_{A} \to \set L_{B} \to \set L_{B/A}$.  It satisfies base change, see Proposition 1.2.1.6 in \cite{Toen05a}.
For an explicit construction replace $B$ by an algebra that is free over $A$, see Section 4 of \cite{Goerss06}.
 In particular then each $B_{n}$ is a free algebra over $A_{n}$.
 We know that $\set L_{A}$ is the simplicial $A$-module that is given by the K\"ahler differentials $\Om^{1}_{A_{n}}$ in degree $n$. As $B$ is free over $A$ we have cofibrations of $B_{n}$-modules $B_{n} \otimes_{A_{n}} \Om^{1}_{A_{n}} \to \Om^{1}_{B_{n}}$ and as the construction of K\"ahler differentials is functorial in pairs of algebras we find that $\set L_{B/A}$ is the simplicial $B$-module that is $\Om^{1}_{B_{n}}/(B_{n}\otimes_{A_{n}} \Om^{1}_{A_n}) = \Om^{1}_{B_{n}/A_{n}}$ in degree $n$. 
 We write $\Om_{B_{n}/A_{n}}^{i}$ for $\wedge^{i}\Om^{1} _{B_{n}/A_{n}}$. 
If $\Spec(B)$ over $\Spec A$ is smooth of dimension $n$ then $\Om^{n+1} \simeq 0$ canonically and we may modify $\Om^{*}$ to assume this is an equality. We shall do so from now on. 
For a smooth morphism $\set L_{B/A}$ is perfect, see Lemma 2.2.2.5 in \cite{Toen05a}.

We can thus define a chain complex of simplicial modules which is $\Om^{i}_{B_{n}/A_{n}}$ in bidegree $(i,n)$.
Explicitly we have the following. Here each $d$ is $A_{i}$-linear.
\[
\begin{matrix}
A_{0} &\stackrel {f^{*}_{0}} \to& B_{0} & \stackrel d \to & \Om^{1}_{B_{0}/A_{0}} &\stackrel d \to & \Om^{2}_{B_{0}/A_{0}} &\to \dots \\
\uparrow \uparrow & & \uparrow \uparrow & & \uparrow \uparrow & & \uparrow \uparrow \\
A_{1} &\to& B_{1} & \to & \Om^{1}_{B_{1}/A_{1}} &\to & \Om^{2}_{B_{1}/A_{1}} &\to \dots \\
\uparrow \uparrow \uparrow & & \uparrow\uparrow \uparrow & & \uparrow \uparrow\uparrow & & \uparrow \uparrow\uparrow \\
A_{2} &\to& B_{2} & \to & \Om^{1}_{B_{2}/A_{2}} &\to & \Om^{2}_{B_{2}/A_{2}} &\to \dots \\
\vdots & & \vdots & & \vdots & & \vdots \\
\end{matrix}
\]
We denote the total complex of the associated double complex of $N(A)$-modules by  $\Om_{B/A}$. 
\begin{rk}
Note that we are indeed taking the exterior algebra in every degree, rather than taking the symmetric algebra in odd degrees, cf. \cite{Illusie06}. 
\end{rk}
\begin{rk}
A deeper study on differential forms in derived algebraic geometry is done in \cite{Pantev11}.
However, for a smooth morphism we do not have to worry about the subtleties dealt with in that work, cf.\ Section 1.2 of \cite{Pantev11}.
\end{rk}

\begin{lemma}\label{lemma-derham}
The cotangent complex $\Om^{1}_{X/T}$ defined above is a coherent sheaf on $X^{an}$.  Restricting to the underived setting we recover the usual cotangent complex for a smooth map, i.e.\ the sheaf of relative holomorphic differentials.
\end{lemma}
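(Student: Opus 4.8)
The plan is to transport the corresponding algebraic statement through analytification and base change, exploiting that $f$ is smooth so that the algebraic relative cotangent complex is already as well-behaved as possible. Since $f$ is smooth, $\set L_{X/S} = \Om^{1}_{X/S}$ is a perfect complex on $X$, in fact a locally free $\cat O_{X}$-module of finite rank (recalled above, following \cite{Toen05a}, 2.2.2.5). Both assertions of the lemma are local on $X_{T}$, and I would argue them in two steps: coherence, and then the identification of the underived truncation.

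For coherence, recall from Section \ref{sect-dan} that analytification of sheaves is given by pullback along the flat comparison map $h_{X}: (X^{an})^{alg} \to X$, so $h_{X}^{*}$ is exact and preserves coherence; hence $(\Om^{1}_{X/S})^{an} = h_{X}^{*}\Om^{1}_{X/S}$ is again locally free of finite rank, in particular coherent, on $X^{an}$. The map $i_{T}: X_{T} \to X^{an}$ is a base change of the inclusion $T \hookrightarrow S^{an}$, and the (derived) pullback of a perfect (resp.\ coherent) complex is perfect (resp.\ coherent); therefore $\Om^{1}_{X/T} = i_{T}^{*}h_{X}^{*}\Om^{1}_{X/S}$ is coherent on $X_{T}$. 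Equivalently one can note directly that $f^{an}$ is smooth, so $\Om^{1}_{X^{an}/S^{an}}$ is locally free and stays so after base change to $T$.

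For the underived comparison, analytification commutes with the truncations $\pi^{0}$ and $\pi_{0}$ (Section \ref{sect-dan}), and for the smooth morphism $\pi_{0}\pi^{0}(f)$ the algebraic object $\pi^{0}\Om^{1}_{X/S}$ is the usual locally free sheaf of relative Kähler differentials $\Om^{1}_{\pi^{0}X/\pi^{0}S}$. On underived schemes $(-)^{an}$ agrees with the classical GAGA analytification (\cite{Porta15}, Proposition 6.5), and the classical analytification of the sheaf of relative differentials of a smooth morphism of schemes is the sheaf of relative holomorphic differentials; since the formation of differentials of a smooth morphism commutes with base change, restricting to $T$ yields exactly the sheaf of relative holomorphic differentials of $X_{T} \to T$, as claimed. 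As $\Om^{1}_{X/S}$ is concentrated in degree $0$ there is no derived subtlety in this last step.

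The only point requiring any care is the bookkeeping of underived versus genuinely derived pullbacks, together with the invocation of the classical identification $(\Om^{1}_{\mathrm{alg}})^{an} \cong \Om^{1}_{\mathrm{hol}}$ in its relative form, compatibly with base change to the possibly non-reduced Stein space $T$; these are all standard facts. We also emphasise, as flagged above, that $\Om^{1}_{X/T}$ is \emph{by definition} the analytified algebraic cotangent complex, so no comparison with an independently constructed analytic cotangent complex is needed.
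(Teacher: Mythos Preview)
Your argument is correct and follows essentially the same route as the paper's own proof: analytification preserves coherence (for the first claim), and compatibility of derived analytification with truncation together with the classical identification of the analytification of K\"ahler differentials with holomorphic differentials (for the second). You have simply unpacked the two sentences of the paper's proof with more care about the intermediate pullback along $i_{T}$ and the local freeness of $\Om^{1}_{X/S}$.
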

\begin{proof}
The first claim is true as analytification preserves coherence, see \cite{Porta15}.

Next note that holomorphic differentials are the analytification of the sheaf of K\"ahler differentials, and derived analytification is compatible with truncation, see Section 7 of \cite{Porta15}.
\end{proof}

For the next lemma we will consider the double chain complex of sheaves associated to the simplicial chain complex $(\Om_{X/T}^{*})_{\bullet}$. We will denote its total complex by $\Om_{X/T}$. 
This is a complex of $N(f^{-1}\cat O_{S})$-modules.

When $f$ is smooth we have the following derived Poincar\'e lemma:

\begin{lemma}\label{lemma-poincare2}
Let $T$ be  
an open subset of the analytification of a derived scheme. Then $\Om_{X/T}$ as constructed above is a resolution of $N(f^{-1}\cat O_{T}^{alg})$.
\end{lemma}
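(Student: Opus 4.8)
The plan is to reduce the statement to the classical (underived) Poincaré lemma for a smooth morphism by working simplicial degree by simplicial degree, then assembling the total complex via a spectral sequence (or, equivalently, a filtration) argument. First I would recall that $\Om_{X/T}$ is by construction the total complex of the double complex whose $(i,n)$-entry is the sheaf $\Om^{i}_{B_{n}/A_{n}}$ on $t(X_T)$, where locally $f^{*}: A \to B$ is a cofibration of simplicial commutative algebras with each $B_n$ free (hence smooth, in fact polynomial) over $A_n$. The key point is that the question is local on $X_T$ and $T$, so I may assume we are in the affine situation $R\Spec B \to R\Spec A$ with $B$ free over $A$ in each simplicial degree; moreover, since analytification is flat and compatible with truncation (Section~\ref{sect-dan}) and since the cotangent complex of a smooth map is perfect and its formation commutes with the relevant base changes (\cite{Toen05a}, 2.2.2.5), it suffices to prove the analogous statement on the algebraic side and analytify. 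Thus the real content is: the total complex of $n \mapsto (B_n \to \Om^1_{B_n/A_n} \to \Om^2_{B_n/A_n} \to \cdots)$ is quasi-isomorphic to $N(A) = N(f^{-1}\cat O_T)$.

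The main step is to filter the double complex by the simplicial degree $n$ and run the corresponding spectral sequence (equivalently: filter by columns and take the "de Rham direction" differential first). For each \emph{fixed} $n$, the row $B_n \to \Om^1_{B_n/A_n} \to \Om^2_{B_n/A_n} \to \cdots$ is exactly the algebraic de Rham complex of the smooth (indeed free/polynomial) $A_n$-algebra $B_n$ relative to $A_n$. Here I invoke the classical relative Poincaré/Grothendieck lemma: for a polynomial algebra $B_n = A_n[x_j : j \in J]$, the complex $\Om^{\bullet}_{B_n/A_n}$ is a resolution of $A_n$ — this is the usual homotopy-operator argument ($\sum x_j \,\iota_{\partial/\partial x_j}$ type contraction) and holds with arbitrary (even infinite) sets of variables since de Rham cohomology commutes with filtered colimits of polynomial algebras. (If $B$ is chosen free but not literally polynomial one first reduces to the polynomial case, or uses that a free simplicial $A$-algebra is levelwise a polynomial algebra.) Hence the $E_1$-page of the spectral sequence is concentrated in de Rham degree $0$ and equals $n \mapsto A_n$, i.e.\ the simplicial $A$-module $A$ itself; its normalization is $N(A)$. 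The spectral sequence therefore degenerates and computes $H^*(\Om_{X/T}) \simeq H^*(N(A)) = H^*(N(f^{-1}\cat O_T))$, with the comparison map being the evident augmentation $\Om_{B/A} \to N(A)$ induced by $B_n \to A_n$ (the counit of freeness, or just the structure map in degree $0$ of each row). Care is needed that this quasi-isomorphism is natural enough to sheafify: since everything was local and the homotopy operators can be chosen compatibly on the free resolution, the local quasi-isomorphisms glue to a global one of complexes of sheaves on $t(X_T)$.

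The part I expect to be the genuine obstacle is \textbf{boundedness and convergence}: $\cat O^{alg}_{X_T}$ and hence the simplicial algebras involved need not be bounded below (this was flagged in Section~\ref{sect-dan}), so the total complex of the double complex is formed in an unbounded setting and one must be careful whether "total complex" means the product or the coproduct totalization, and whether the filtration spectral sequence converges. I would handle this by noting that in each \emph{fixed} simplicial degree $n$ the de Rham row is a bounded-below complex (it starts in degree $0$), so the filtration by simplicial degree is exhaustive and the associated spectral sequence is one whose filtration is, in each total degree, eventually constant in the relevant direction — alternatively, by working with the normalized double complex and observing that the Dold–Kan correspondence lets one replace the simplicial direction by the (non-positively graded) normalized chain complex, after which standard convergence criteria for the homotopy limit/colimit totalization apply. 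A clean way to phrase this is to first establish the statement after truncation (where boundedness holds and the classical argument runs verbatim), then pass to the limit using that both sides commute with the truncation tower; this is also consistent with the way analytification and truncation interact as used elsewhere in the paper. Finally I would remark that the underived shadow of this statement is precisely Deligne's quasi-isomorphism $\Om^{*}_{X/S} \simeq f^{-1}\cat O_S \otimes \C$ recalled in Section~\ref{sect-map}, so the lemma is the expected derived enhancement.
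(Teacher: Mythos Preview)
Your argument has a genuine gap, and it stems from a confusion about which de~Rham complex is at stake.

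You compute $\Om_{X/T}$ via a \emph{free} simplicial resolution, so that each $B_n$ is a polynomial algebra over $A_n$, and then invoke the algebraic Poincar\'e lemma for polynomial rings row by row. That computation is correct for what it proves: the direct-sum-totalised de~Rham complex of a free simplicial resolution is quasi-isomorphic to $N(A)$ in characteristic~$0$. But this is the well-known triviality of \emph{naive} derived de~Rham cohomology over $\Q$; it holds for \emph{any} $A\to B$, smooth or not, and requires no analytification whatsoever. That universality is a warning sign.

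The complex $\Om_{X/T}$ in the lemma is \emph{not} that object. Since $f$ is smooth and strong, the paper works (implicitly) with the levelwise-smooth model $B_n \simeq A_n\otimes_{\pi_0 A}\pi_0 B$; this is exactly what makes the assertion in the proof that $B\otimes_A A/\mathfrak n$ is ``reduced and constant'' true, and it is what gives a de~Rham complex bounded in the Hodge direction (length $\dim(X/S)+1$), so that the Hodge filtration $\cat F^i=\Om^{\ge i}$ has the correct finite length and restricts to the classical one on fibres. For this smooth (non-polynomial) model your rowwise step fails: the algebraic relative de~Rham complex of a smooth algebra is not a resolution of the base. Already for $A=\C$, $B=\C[t,t^{-1}]$ one has $H^1(\Om^\bullet_{B/A})=\C\cdot[dt/t]\neq 0$ on every Zariski open. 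Equivalently, the comparison map from your free-resolution total complex to the smooth-model de~Rham complex is \emph{not} a quasi-isomorphism (each $\Om^i$ agrees up to quasi-isomorphism, but the de~Rham differential is not $B$-linear, so the totalisations differ). Hence ``prove it algebraically and analytify'' does not reduce the problem: the algebraic statement for the relevant complex is simply false.

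The paper's route is genuinely different and uses analytification in an essential way. One filters $N(A)$ and $\Om^*_{B/A}$ by powers of the ideal $\mathfrak n$ of nilpotents and positive-degree elements, identifies the associated graded pieces with $\Om^*_{B'/A'}\otimes_{N(A')}\mathrm{Gr}^i_{\mathfrak n}N(A)$ where $A'=(\pi_0 A)_{\mathrm{red}}$ and $B'$ is smooth over $A'$, and then invokes the \emph{underived analytic} Poincar\'e lemma (Deligne) for $\Spec(B')^{an}\to\Spec(A')^{an}$ after pulling back along the flat map $\iota$. The passage to the analytic topology is precisely what kills the $dt/t$-type classes. Your spectral-sequence/convergence worries are secondary; the missing idea is this reduction to the underived analytic statement via the $\mathfrak n$-adic filtration, not a rowwise algebraic contraction. (Minor aside: your augmentation goes the wrong way; the comparison map is $e:N(A)\to\Om_{B/A}$ induced by $A_n\hookrightarrow B_n$, not a retraction $B_n\to A_n$.)
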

\begin{proof}
The question is local, so let us first assume $X \to S$ is a smooth map of derived affine schemes, and let $i_{T}: T \to S^{an}$ be an open subspace such that $\pi^{0}(T)$ is contractible Stein and relatively compact. 
We consider the map $A = \cat O(S) \to B=\cat O(X)$ of simplicial algebras.

There clearly is a map $e: N(A) \to \Om^{*}_{B/A}$ of $N(A)$-modules. We claim it is a quasi-isomorphism after base-change to $\cat O(T)$, i.e.\ after analytification and restriction to $T$. We note that $i_{T}: T \to S^{an}$ is flat as $T$ is relatively compact in $S^{an}$, see Lemma 8.13 of \cite{Porta14}.
Then $\iota = h_{X}\circ i_{T}$ is flat since $h_{X}$ is flat, see Corollary 5.15 of \cite{Porta15}. So we aim to show that $\iota^{*}(e)$ is a quasi-isomorphism.

We  consider $\mathfrak n$, the ideal of nilpotents and elements of positive degree in $N(A)$. By abuse of notation we will use the same name for the corresponding ideal in $A$.

This is a simplicial ideal and $A/\mathfrak n$ is reduced and underived. 
We also consider the ideal $\mathfrak n_{B}=\mathfrak n \otimes_{A}B$ in $B$. 
Note that $B \otimes_{A} A/\mathfrak n$ is also reduced and constant (as a simplicial algebra) as we assume $f$ smooth. 

The $\mathfrak n^{i}$ form an exhaustive filtration of $N(A)$. 
As $\Om^{*}_{B/A}$ is an $N(A)$-complex we can filter it by the $\mathfrak n^{i}$, too. (As $\Om^{*}_{B/A}\otimes_{N(A)}\mathfrak n^{i}$ are subcomplexes.) We aim to show that the associated graded map of $\iota^{*}(e)$ induces a quasi-isomorphism, which shows that $\iota^{*}(e)$ itself is a quasi-isomorphism. As $\iota$ is flat we can equivalently show that the associated graded pieces of $e$ become quasi-isomorphisms after applying $\iota^{*}$.

Write $A'$ for $A/\mathfrak n$ and $B'$ for $B/(\mathfrak n \otimes_{A} B)$.
Now we claim the following: 
\begin{itemize}
\item $N(A') = N(A)/\mathfrak n$ and $\Om^{*}_{B'/A'}$ become quasi-isomorphic when applying $\iota^{*}$,
\item $Gr_{\mathfrak n}^{i}N(A)$ and $ \Om^{*}_{B'/A'} \otimes_{N(A')} Gr_{\mathfrak n}^{i}N(A)$ become quasi-isomorphic when applying $\iota^{*}$,
\item $Gr^{i}_{\mathfrak n}(\Om^{*}_{B/A}) \simeq \Om^{*}_{B'/A'} \otimes_{N(A')} Gr_{\mathfrak n}^{i}N(A)$.
\end{itemize}

The first claim follows from the usual underived Poincar\'e lemma, see for example Deligne \cite{Deligne70}, observing that $\Spec(B')^{an}\to \Spec(A')^{an}$ is a smooth map of analytic spaces. Then we base change to $T_{red}$ and apply the exact global sections functor. 

The second claim follows from the first since $\Om^{*}_{B'/A'}$ is flat over $N(A')$ and pullback is compatible with tensor products.

So let us turn to the third claim. Here we will have to work simplicially. We begin with $i=0$ and show that $\Om^{m}_{B/A}/\mathfrak n \cong \Om^{m}_{B'/A'}$.
Recall that $\Om^{m}_{B/A}$ is $\Om^{m}_{B}/\Om^{m-1}_{B} \otimes_{A} \Om^{1}_{A}$. So we can consider the natural map 
\[
\Om^{m}_{B} \stackrel g \to \Om^{m}_{B'} \to \Om^{m}_{B'}/(B' \otimes_{A'}\Om^{1}_{A'})\]
The map is clearly surjective and we claim the kernel is given by $\mathfrak n \otimes_{A}\Om^{m}_{B} \oplus \Om^{m-1}_{B} \otimes_{A} \Om^{1}_{A}$.

It follows from the short exact sequence $\mathfrak n/\mathfrak n^{2}\to \Om^{1}_{B} \to \Om^{1}_{B'}$ that the kernel is given by $d\mathfrak n_{B} + g^{-1}(\Om^{i-1}_{B'} \otimes_{A'}\Om^{1}_{A'}) = \mathfrak n \otimes \Om^{m}_{B} + \Om^{1}_{A}\otimes \Om^{m-1}_{B}$ as $d\mathfrak n \subset \Om^{1}_{A}$.

So we have shown $\Om^{m}_{B/A}/\mathfrak n \cong \Om^{m}_{B'/A'}$ as simplicial $A'$-modules. The differential is compatible thus we get a quasi-isomorphism with the associated complexes of $N(A')$-modules, $\Om^{*}_{B/A}/\mathfrak n \simeq \Om^{*}_{B'/A'}$.

To consider the other associated graded pieces we observe that for any $A$-modules $M$ we have $(I^{n}A\otimes_{A} M)/(I^{n+1}A \otimes_{A}M) \cong (I^{n}A/I^{n+1}A)\otimes_{A/I} M/I$. This is valid in the simplicial setting as it holds degree by degree. Again we deduce the quasi-isomorphism of complexes.

Together these claims give the desired quasi-isomorphism on the associated graded modules and we have locally shown the quasi-isomorphism $N(f^{-1}\cat O_{T}^{alg})\simeq \Om^{*}_{X/T}$ of sheaves. 
\end{proof}
We could now consider the derived push-forward $Rf_{*}\Om_{X/T}$ and it is not hard to deduce that $Rf_{*}\C \otimes \cat O_{T}^{alg} \simeq Rf_{*}\Om_{X/T}^{an}$. However, to define the bilinear form it is more convenient for us to forget the differentials and work with $\Om^{\#}_{X/T} \coloneqq (\Om^{*}_{X/T}, 0)$.
Thus, instead of using Lemma \ref{lemma-poincare2} we deduce the main consequence of the Poincar\'e Lemma together with Hodge-to-de Rham degeneration from the classical case as follows:
\begin{lemma}\label{lemma-hodge2derham}
Let $\Om^{\#}_{X/T} = \oplus \Om^{i}_{X/T}[-i]$ be the de Rham algebra without its differential. Then $Rf_{*}f^{-1}\cat O^{alg}_{T} \simeq Rf_{*}\Om^{\#}_{X/T}$. 
\end{lemma}
\begin{proof}
There is a natural map induced by the inclusion $f^{-1} \cat O^{alg}_{T}\to \Om^{\#}_{X/T}$ and we need to check it is a quasi-isomorphism.
As $f$ is smooth, $\Om^{i}_{X/S}$ is a projective $\cat O_{X}$-module \cite[Definition 1.2.7.1]{Toen05a} 
and in particular strong \cite[Lemma 2.2.2.2]{Toen05a}, 
thus, after analytifying and restricting, $H^{k}(\Om^{i}_{X/T}) \cong H^{k}(\cat O_{X}) \otimes_{H^{0}(\cat O_{X})} H^{0}(\Om^{i}_{X/T})$. 
Moreover, $\cat O_{X}$ is a strong $f^{-1}\cat O_{S}$-module and we have $H^{k}(\Om^{i}_{X/T}) \cong H^{k}(f^{-1}\cat O_{S}) \otimes_{H^{0}(f^{-1}\cat O_{S})} H^{0}(\Om^{i}_{X/T})$. We are in the analytic setting but do not need to worry about completing the tensor product as we work with finitely presented modules.

The $E_{2}$-term of the natural Grothendieck spectral sequence computing $R^{k}f_{*}\Om^{\#}_{X/T}$ is thus $R^{p}f_{*}(H^{0}(\Om^{i}_{X/T}) \otimes H^{q}(f^{-1}\cat O^{alg}_{T}))$.
As $f$ is proper  we apply the projection formula to the complexes $\Om^{i}_{X/T}$ and $f^{-1}\cat O^{alg}_{T}$ to find $E_{2}^{pq} \cong R^{p}f_{*}(H^{0}(\Om^{i}_{X/T})) \otimes H^{q}(\cat O^{alg}_{T})$.  For the projection formula one usually assumes bounded below complexes of sheaves (with the cohomological grading) and the structure sheaf is not bounded below in general, but because the fibres of our map of topological spaces are locally finite-dimensional the theorem remains true for unbounded complexes, see \S 6 of \cite{Spaltenstein88}.

Similarly on the left hand side we have a spectral sequence with $E_{2}^{pq} = R^{p}f_{*}(H^{0}(f^{-1}\cat O^{alg}_{T})) \otimes H^{q}(\cat O^{alg}_{T})$.
The natural map induces an isomorphism since $H^{0}(f^{-1}\cat O^{alg}_{T}) \simeq H^{0}(\Om_{X/T})$ and $Rf_{*}H^{0}(\Om_{X/T})\simeq Rf_{*}H^{0}(\Om^{\#}_{X/T})$ is just classical Hodge theory.
\end{proof}

Using the product on $\Om^{\#}_{X/T}$ and suitable resolutions we may assume this is a sheaf of algebras.
\begin{lemma}\label{lemma-trivialize}
Let $f: X \to S$ be a smooth projective morphism of derived schemes and consider an open derived Stein $T \subset S^{an}$ 
and $\Om^{\#}_{X/T}$ as above. Assume moreover that $t(T)$ is simply connected. 

Then there is a quasi-isomorphism of $\cat O_{T}^{alg}$-algebras $\bar v: Rf_{*}\C\otimes \cat O_{T}^{alg} \to Rf_{*}(\Om^{\#}_{X/T})$ and a quasi-isomorphism of $\cat O_{T}^{alg}$-modules
 $\varphi: V \otimes \cat O_{T}^{alg} \to Rf_{*}\C \otimes \cat O_{T}^{alg}$. 
We write $\bar w$ for $\bar v \circ \varphi$. These maps are compatible with smooth base change.
\end{lemma}
\begin{proof}
By Lemma \ref{lemma-hodge2derham} we have a weak equivalence $Rf_{*}f^{-1}\cat O_{T}^{alg}\to Rf_{*}\Om^{\#}_{X/T}$.
As $f$ is proper with locally finite dimensional fibres we may use the projection formula to find $Rf_{*}\C \otimes \cat O^{alg}_{T} \simeq Rf_{*}f^{-1}\cat O_{T}^{alg}$. We call the composition $\bar v$. The tensor product here is over the constant sheaf and can be considered underived. 

We may compute the natural map in the category of sheaves of algebras and thus arrange that $\bar v$ is a homomorphism. 
 
Now $Rf_{*}\C$ only depends on the underlying topological space, hence it is a homotopy locally constant sheaf on $t(T)$ by the classical result.
By Corollary \ref{cor-formality} this is actually formal, i.e.\ quasi-isomorphic to the graded local system of its cohomology sheaves. But as $t(T)$ is simply connected this must be a constant sheaf, which is of course quasi-isomorphic to $V$. 
Thus we let $\varphi$ be Deligne's map from \ref{thm-deligne}, tensored with $\cat O_{T}^{alg}$.

We need to check $\bar w$ is compatible with base change, i.e.\ given a smooth
$r: T' \to T$ the quasi-isomorphism $\bar w'$ constructed from $T' \to S^{an}$ is equivalent to $r^{*}\bar w$ under the natural transformations $r^{*}(V \otimes \cat O^{alg}_{T}) \to V \otimes \cat O^{alg}_{T'}$ and 
$r^{*}(Rf_{*}\Om^{\#}_{X/T}) \to Rf'_{*}(\Om^{\#}_{X'/T'})$. These are weak equivalences by base change for the cotangent complex and base change for derived schemes (Proposition 1.4 of \cite{Toen12}). 
The equivalence of $r^{*}\varphi$ and $\varphi'$ is clear as everything is compatible with the Lefschetz operator. Compatibility of $\bar v$ follows from nonabelian base change for locally compact Hausdorff spaces, see Corollary 7.3.1.18 in \cite{Lurie11a}. 
\end{proof}

Our next ingredient is the bilinear form which we construct algebraically. It will follow directly from the existence of the trace map on $f_{*}\Om^{\#}_{X/S}$.
\begin{lemma}\label{lemma-trace}
Given $f: X \to S$ smooth and proper and such that $S$ is derived scheme there is a natural trace map $Rf_{*}(\Om^{\#}_{X/S}) \to \cat O_{S}$. In particular it is compatible with base change.
\end{lemma}
\begin{proof}
This is essentially Grothendieck-Verdier duality.
First we observe that by the adjoint functor theorem under mild conditions there is an exceptional inverse image functor, i.e.\ a right adjoint to $Rf_{*}$, denoted $f^{!}$. See Section 6.4 in \cite{Lurie18}. Secondly, we need a comparison map between
differential forms and $f^{!}\cat O_{S}$.  We construct a map $\Om^{\#}_{X/S} \to f^{!}\cat O_{S}$ following the argument in the underived case given by Neeman in Construction 3.1.5 of \cite{Neeman18}. (Note that for our purposes we do not need to check that this map is a weak equivalence.) 

The construction uses base change for the exceptional inverse image to formally construct a map $L\Delta^{*}R\Delta_{*}\cat O_{S} \to f^{!}\cat O_{S}$ (where $\Delta: S \to S \times S$ is the diagonal). The derived version of the necessary base change theorem is proved in Proposition 6.4.2.1 of \cite{Lurie18}. As $f$ has finite-dimensional fibres it is locally of finite $\Tor$-amplitude.  

To complete the construction one uses the Hochschild-Kostant-Rosenberg theorem identifying $\Om^{\#}_{X/S}$ with $L\Delta^{*}R\Delta_{*}\cat O_{S}$. 
The derived HKR theorem over an affine base can be found in Corollaire 4.2 of \cite{Toen09}. The statement is over an underived base, but the proof works the same way over a derived ring $A$. To globalise from a derived affine scheme $S = R\Spec(A)$ to the case where $S$ is a scheme, and in fact an Artin stack, we may use Proposition 1.14 of \cite{Pantev11} which shows that global sections of the de Rham algebra may be computed as a homotopy limit of the de Rham algebras of affine pieces. It is easier to see that the construction of Hochschild homology globalises similarly: We can check directly that $L\Delta^{*}R\Delta_{*}\cat O_{S}$ is compatible with base change.

Compatibility of the trace map with base change follows as in Reduction 3.2.5 of \cite{Neeman18}, using again the derived base change from \cite{Lurie18}.
\end{proof}

\begin{lemma}\label{lemma-constructq}
Given $f: X \to S$ smooth proper and polarized and such that $S$ is a derived scheme there is a natural non-degenerate shifted symmetric bilinear form $Q_{F}$ on the perfect complex $F^{0}= Rf_{*}(\Om^{\#}_{X/S})$. It reduces to the classical pairing induced by the trace in the non-derived truncation. $Q_{F}$ is compatible with base change.
\end{lemma}
\begin{proof}
We compose the trace map from Lemma \ref{lemma-trace} with the natural product on $F^{0}$ to define $Q_{F}$. By construction this enhances the classical bilinear form.

To check non-degeneracy of $Q_{F}$ it is sufficient to check that the analytification is non-degenerate locally.
We will compare $Q_{F}^{an}$ on $T \subset S^{an}$ to $Q\otimes \cat O^{alg}_{T}$ which is clearly non-degenerate.
In other words we show that $\bar w$ from Lemma \ref{lemma-trivialize} induces a weak equivalence $(V\otimes \cat O^{alg}_{T}, Q \otimes \cat O^{alg}_{T}) \simeq (Rf_{*}\Om^{\#}_{X/T}, Q_{F}^{an})$. The two bilinear forms agree in the reduced case by classical results, see \cite{Sastry03} for a comparison of trace pairing and Hodge pairing. It follows that $\bar w$ preserves the bilinear form on the subcomplex $V$. Since $\bar w$, $Q_{F}^{an}$ and $Q\otimes \cat O^{alg}_{T}$ are all compatible with the $\cat O_{S}$-structure this gives the desired result.

Finally, for base change we need to pick $g: S' \to S$ and compare $g^{*}Q_{F}$ with $Q_{F'}$ where $F'$ is the pushforward of the de Rham algebra on $X \times_{S}S'$. This follows from naturality of the trace map. 
\end{proof}

\subsection{The local derived period map II: The Hodge filtration}\label{sect-localperiod-2}
To  construct  the Hodge filtration we look at the inclusion of sheaves given by the stupid truncation of the relative de Rham complex, $\cat F^{i} \subset \cat F^{i-1} \subset \Om_{X/T}$.

We first consider the Hodge filtration without trivialising the de Rham complex. As our constructions are compatible with base change we may work directly over a stack.
\begin{lemma}\label{lemma-filtmoduli} 
Given $f: X \to S$ smooth proper and polarized and such that $S$ is derived Artin stack. Then the filtered de Rham complex induces a map $\tilde \phi: S \to \cat QFilt$.
\end{lemma}
\begin{proof}
We sketch how to define the pushforward map on stacks. (See the proof of Proposition 2.1.1 in \cite{Gaitsgory11} for the $\oo$-categorical analogue.)

We choose an affine hypercover $\{S_{i}\}_{i\in I}$ of $S$ which induces a cover $\{X_{i} = X\times_{S^{an}}S^{an}_{i}\}$ of $X^{an}$. All $X_{i}$ are derived schemes as $f$ is smooth. Now the pushforward maps on the $X_{i}$ give a pushforward map of presections.
The pushforward is a homotopy cartesian section by base change for quasi-compact and quasi-separated maps of derived schemes, see Proposition 1.4 of \cite{Toen12}. 

Base change also implies that the pushforward construction is well-defined. Given two different affine covers $S_{\bullet}$ and $S'_{\bullet}$ of $S^{an}$ we may find a common refinement by taking a hypercover of the fibre product. 
Thus it is enough to check that the construction is invariant under refinements of the cover $u: S_{\bullet}' \to S_{\bullet} 
\to S^{an}$. Base change provides canonical (homotopy coherent) comparison maps between the pullback of a pushforward on $S^{an}$ and the pushforward constructed on the refinement, which are moreover weak equivalences. 
Viewing sheaves as homotopy cartesian sections these maps give weak equivalences. 

The pushforward we have constructed preserves fibrations (as the adjoint preserves object-wise cofibrations), so it is Quillen.

Thus we may define $F^{i} \coloneqq Rf_{*}(\cat F^{i})$ and have a diagram
\[
F^{n}\to F^{n-1} \to 
\dots \to F^{0}
\]
of homotopy cartesian presections on $S_{\bullet}$ which may be interpreted as complexes of sheaves on $S$.
To see the $F^{i}$ are perfect consider the pushforward $Rf_{*}\Om^{m}_{X/S}$ on $S$. As the pushforward of a perfect complex by a smooth proper map 
this is again perfect by Lemma 2.2 of \cite{Toen12}. 

We check locally that $F^{\bullet}$ satisfies the conditions of Lemma \ref{lemma-filtbilinear}.
Let us assume $S= R\Spec(A)$ is affine and abuse notation by writing $F^{i}$ for $R\Gamma(F^{i})$.  It is clear that we can replace the $F^{i}$ by a diagram of inclusions of cofibrant objects. 
Then we need to check the following:
\begin{enumerate} 
\item All maps induce injections on $H^{*}(-\otimes^{L}_{A}H^{0}(A))$, and the $H^{*}(F^{i}\otimes^L_{A}H^{0}(A))$ are locally free sheaves.
\item There is a bilinear form $Q_{F}$.
\item The $F^{i}$ satisfy orthogonality with respect to $Q_{F}$.
 \end{enumerate} 

The first statement follows from classical Hodge theory. Writing $e_{S}$ for the inclusion $\pi^{0}(S) \to S$ we can use base change  to show that $e_{S}^{*}Rf_{*}\Om_{X/S}^{\geq p}$ is locally free because it agrees with $Rf_{*}e_X^{*}\Om_{X/S}^{\geq p} = Rf_{*}\Om_{\pi^{0}(X)/\pi^{0}(S)}^{\geq p}$, which is well-known to be locally free in classical Hodge theory. We argue similarly for the injections.

To define the shifted bilinear form we will be very explicit about our replacements. Some care is needed since fibrant cofibrant replacements are not strictly monoidal, and for technical reasons we have set up a strict orthogonality condition, rather than one up to homotopy. 

We choose an affine cover of $X$ and the injective model category of presections as a model for sheaves of $\cat O_{X}$-modules, see the discussion before Corollary \ref{cor-globalperiodstack}. 

Object-wise tensor product makes this into a symmetric monoidal model category, the pushout-product axiom holds since cofibrations are defined object-wise.
We use fibrant cofibrant replacement using functorial factorization, denote this functor by $P$. 
We also consider the transfer model category structure on algebra objects, see e.g.\ \cite{Goerss06}, and we denote fibrant cofibrant replacement with respect to this structure by $P_{alg}$. Fibrant cofibrant objects are preserved by the forgetful functor.

Next we may replace $\cat F^{i}$ by a diagram of cofibrations using functorial factorization, call it $Q'(F^{i})$, and then by a diagram of cofibrations between fibrant cofibrant objects, write this as $P'\cat F^{i}$. 
For simplicity we write $\otimes$ for the derived tensor product over the structure sheaf for the rest of this section. 
We observe that by the monoid axiom the diagram $(i,j) \mapsto Q'\cat F^{i}\otimes Q'\cat F^{j}$ is actually cofibrant in the projective model structure, and similarly for $P'$.
Thus the weak equivalences of diagrams $Q'(\cat F^{i}) \otimes Q'(\cat F^{j}) \to \cat F^{i}\otimes \cat F^{j}\leftarrow Q(\cat F^{i} \otimes \cat F^{j})$ give rise to a lift $Q'(\cat F^{i}) \otimes Q'(\cat F^{i}) \to Q(\cat F^{i}
\otimes \cat F^{j})$ as the second map is a trivial fibration.
Similarly we have functorial quasi-isomorphisms $P'(\cat F^{i}) \otimes P'(\cat F^{j}) \leftarrow Q'(\cat F^{i})\otimes Q'(\cat F^{j}) \to P(\cat F^{i} \otimes \cat F^{j})$ 
where the first map is a trivial cofibration, thus
there are compatible lifts $\eta_{ij}: P'(\cat F^{i}) \otimes P'(\cat F^{j}) \to P(\cat F^{i} \otimes \cat F^{j})$. 

We also choose a quasi-isomorphism $\epsilon: P\cat F^{0} \to P'(\cat F^{0})$.
Denote the product on $\cat F^{0}$ by $m$ and then define a multiplication on $P'\cat F^{0}$ by considering 
\[
\epsilon \circ Pm \circ \eta_{00}: P'\cat F^{0}\otimes P'\cat F^{0} \to P(\cat F^{0}\otimes \cat F^{0}) \to P(\cat F^{0}) \to P'(\cat F_{0})\]

This product vanishes on $P'\cat F^{i}\otimes P'\cat F^{n+1-i}$, as we can factor it through $\eta_{ij}$ and $m$ vanishes on $\cat F^{i}\otimes \cat F^{n+1-i}$ as $\Om^{n+1}$ is $0$. 

We push forward both $P'\cat F^{i}$ and $P_{alg}\cat F^{0}$ and functorially replace by a diagram of cofibrations again, written as $Q'$. The resulting diagram is our definition of $F^{i}$.

Now, using the lifting property for cofibrant diagrams as above and adjointness of $f_{*}$ there are natural and compatible maps 
\[\phi_{ij}: Q'f_{*}P'\cat F^{i} \otimes Q'f_{*}P'\cat F^{j} \to Q(f_{*}P'\cat F^{i} \otimes f_{*}P'\cat F^{j}) \to Qf_{*}(P'\cat F^{i} \otimes P'\cat F^{j})\]
 and we can compose $\phi_{00}$ with $Qf_{*}(\epsilon \circ Pm \circ \eta_{00})$ and a quasi-isomorphism $Qf_{*}(P'(\cat F^{0})) \to Q'f_{*}(P'(\cat F^{0}))$ to define a product on $F^{0}$ that is zero on $F^{i}\otimes F^{n+1-i}$.
 
 The construction of the bilinear form is completed by the canonical trace from Lemma \ref{lemma-constructq}.
Orthogonality for the $F^{i}$ is clear since the product of sections in $F^{i}$ and $F^{n+1-i}$ vanishes.
\end{proof}

To compare with $Rf_{*}\C$ we go back to the analytic setting.
\begin{lemma}\label{lemma-period}
Let $f: X_{T} \to T$ and $(V, Q)$ be as in the previous subsection, in particular $T$ is derived Stein. Then we have a diagram of perfect complexes over $A = N(\cat O(T))$ 
\[
F^{n}\to F^{n-1} \to 
\dots \to F^{0}  \tilde \leftarrow V \otimes A 
\] 
where 
 $F^{i} \simeq R\Ga\circ Rf_{*}(\cat F^{i})$,
the $F^{i}$ are cofibrant $A$-modules and the maps $F^{i}\to F^{i-1}$ are injective.
Moreover, the conditions of Theorem \ref{thm-periodstack} are  
satisfied, namely: 
\begin{enumerate} 
\item The last map $\bar w$ is a quasi-isomorphism. 
\item All other maps induce injections on $H^{*}(-\otimes^{L}_{A}H^{0}(A))$, and the $H^{*}(F^{i}\otimes^L_{A}H^{0}(A))$ are locally free sheaves.
\item There is a bilinear form $Q_{F}^{an}$ on $F^{0}$ which is compatible with $\bar w$ and $Q$.
\item The $F^{i}$ satisfy orthogonality with respect to $Q_{F}^{an}$.
 \end{enumerate} 
In other words we get an object of $\cat D_n(V,Q)(uT)$.
\end{lemma}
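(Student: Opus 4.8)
The plan is to take the $F^{i}$ to be derived global sections of the pushforwards of the stupid truncations $\cat F^{i}=\Om^{\geq i}_{X/T}$ of the relative de Rham complex, and then verify the four numbered conditions one by one, reducing (2) to classical Hodge theory via derived base change and obtaining (3)--(4) from the product structure on $\Om_{X/T}$ together with Lemma~\ref{lemma-trivialize}. Concretely, I would choose a finite cover of $t(X_{T})$ by Stein opens — this exists because $X_{T}\to T$ is proper and $\pi_{0}\pi^{0}(T)$ is Stein — and let $F^{i}$ be the \v Cech complex of global sections of $\cat F^{i}$ for this cover. Since Stein opens have vanishing higher coherent cohomology, $F^{i}$ computes $R\Gamma\circ Rf_{*}(\cat F^{i})$; its terms are global sections of bounded complexes of vector bundles over finite-dimensional Stein spaces, so they are cofibrant $A$-modules, and the inclusions of stupid truncations give honest injections $F^{n}\hookrightarrow F^{n-1}\hookrightarrow\dots\hookrightarrow F^{0}$. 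That each $F^{i}$ is a perfect $A$-module follows because $\cat F^{i}$ is a bounded complex of vector bundles on a space proper over $T$, so $Rf_{*}\cat F^{i}$ is coherent with bounded Tor-amplitude over $\cat O_{T}$, while $R\Gamma$ over the Stein space $\pi_{0}\pi^{0}(T)$ is exact on coherent complexes and preserves perfectness; thus $F^{i}$ is a perfect complex on $uT=R\Spec\cat O(T)$.

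\textbf{Conditions (1), (3), (4).} Condition (1) is immediate: $F^{0}=R\Gamma\,Rf_{*}\Om_{X/T}$, and Lemma~\ref{lemma-trivialize} supplies a quasi-isomorphism $w'\colon V\otimes_{\C}\cat O_{T}\to Rf_{*}\Om_{X/T}$, which remains one after the exact functor $R\Gamma$. For (3), the lax monoidal structure of $Rf_{*}$ and the wedge product give a map $\mu\colon Rf_{*}\Om_{X/T}\otimes Rf_{*}\Om_{X/T}\to Rf_{*}\Om_{X/T}$ (one may take $Rf_{*}\Om_{X/T}$ to be a sheaf of algebras as in the remark preceding the lemma), and composing with the Verdier-duality trace and applying $R\Gamma$ produces a $2n$-shifted bilinear form $Q_{F}$ on $F^{0}$ as in Theorem~\ref{thm-periodstack}; transporting along $w'$ identifies $Q_{F}$ with $Q$ on $V$, using that Deligne's quasi-isomorphism $\phi$ respects the forms as recorded in Section~\ref{sect-monodromy}, and non-degeneracy of $Q_{F}$ is fibrewise Poincar\'e duality, equivalently the identification with the assumed non-degenerate $Q$. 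Condition (4) is the observation that the wedge product sends $\cat F^{i}\otimes\cat F^{n+1-i}$ into $\cat F^{n+1}=\Om^{\geq n+1}_{X/T}$, which vanishes since the fibres have dimension $n$; as the trace factors through this product, $Q_{F}$ restricts to zero on $F^{i}\otimes F^{n+1-i}$, which is exactly the Hodge--Riemann orthogonality.

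\textbf{Condition (2).} Each $\cat F^{i}$ is a bounded complex of sheaves flat over $f^{-1}\cat O_{T}$ (the $\Om^{j}_{X/T}$ are vector bundles and $f$ is flat), so the projection formula makes $Rf_{*}$ commute with derived base change; combined with the exactness of $R\Gamma$ over Stein spaces this identifies $F^{i}\otimes^{L}_{A}H^{0}(A)$ with $R\Gamma(X_{0},\Om^{\geq i}_{X_{0}/T_{0}})$ for the classical smooth projective family $X_{0}=\pi_{0}\pi^{0}(X_{T})\to T_{0}=\pi_{0}\pi^{0}(T)$ over the Stein base $T_{0}$, whose cohomology is $H^{0}(T_{0},R^{j}f_{0*}\Om^{\geq i}_{X_{0}/T_{0}})$ by Cartan's Theorem~B. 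Degeneration of the Fr\"ohlicher spectral sequence then shows the Hodge bundles $R^{j}f_{0*}\Om^{\geq i}$ are locally free and the maps between successive ones injective with locally free cokernel; over the possibly non-reduced $T_{0}$ one gets this by filtering along powers of the nilradical and running the argument of Lemma~\ref{lemma-poincare2}, the reduced case being the classical statement recalled in Section~\ref{sect-map}. Taking global sections over the finite-dimensional Stein space $T_{0}$ preserves flatness, local freeness and injectivity, establishing (2). Assembling the four conditions, $(F^{*},w',Q_{F})$ meets the hypotheses of Theorem~\ref{thm-periodstack} and so determines an object of $\cat D_{n}(V,Q)(uT)$.

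\textbf{Main obstacle.} I expect Condition (2) to be the delicate point. One has to be careful that $F^{i}\otimes^{L}_{A}H^{0}(A)$ genuinely computes the classical filtered de Rham complex, since the de Rham differential is only $f^{-1}\cat O_{T}$-linear and one is base-changing along the nilpotent thickening $T_{0}\hookrightarrow T$; and because $\pi_{0}\pi^{0}(T)$ need not be reduced one cannot invoke Grauert's base-change theorem directly, but must descend the local-freeness and strictness of the Hodge filtration from $T_{0,\mathrm{red}}$ by the nilradical-filtration method used in the proof of Lemma~\ref{lemma-poincare2}. The remaining Stein-theoretic bookkeeping (exactness of $R\Gamma$, perfectness of $R\Gamma$ of a perfect coherent complex) is routine.
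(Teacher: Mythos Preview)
Your overall strategy matches the paper's, but you have misidentified the main obstacle. Condition (2) is handled in the paper almost exactly as you suggest: one reduces to the underived setting via the base change square $\pi^{0}X \to X$ over $\pi^{0}S \to S$ (Proposition 1.4 of \cite{Toen12}) and then invokes classical Hodge theory; GAGA (Lemma~\ref{lem-gaga}) transports this to the analytic side. The paper also takes a different route to perfectness: rather than arguing directly with Stein spaces and Tor-amplitude, it first shows $Rf_{*}\Om^{m}_{X/S}$ is perfect \emph{algebraically} (again by base change to $\pi^{0}$), and then applies GAGA to conclude the analytification is perfect. Your direct analytic argument for perfectness is plausible but you would need to justify that $R\Gamma$ over a derived Stein space preserves perfectness of coherent complexes, which is not entirely routine.

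The real work in the paper is in (3) and (4), and here your argument is too quick. The orthogonality condition in Theorem~\ref{thm-periodstack} is \emph{strict}: $Q_{F}$ must literally vanish on $F^{i}\otimes F^{n+1-i}$, not merely up to homotopy. You say the product factors through $\cat F^{n+1}=0$, which is correct at the sheaf level, but once you pass through $Rf_{*}$ and cofibrant replacement this need not remain strictly zero, because replacement is not strictly monoidal. The paper spends most of the proof building a carefully engineered system of replacements $P$, $P'$, $Q'$, $P_{alg}$ and comparison maps $\eta_{ij}$, $\epsilon$, $\phi_{ij}$ so that the product on $F^{0}$ genuinely factors through the (strictly zero) map induced by $\cat F^{i}\otimes\cat F^{n+1-i}\to\cat F^{0}$, while keeping everything fibrant-cofibrant. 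Your \v Cech model is an attractive alternative that could sidestep much of this bookkeeping---the cup product on a \v Cech complex does respect the stupid filtration strictly---but then the burden shifts to showing that the \v Cech terms are cofibrant over the \emph{derived} ring $A=N(\cat O(T))$, which you assert without proof. Global sections of a vector bundle on a derived Stein space are not obviously projective $A$-modules; you would need something like flatness of $\cat O_{X_{T}}$ over $f^{-1}\cat O_{T}$ together with a Stein argument in each simplicial degree. If you can establish that, your approach would be a genuine simplification of the paper's model-categorical gymnastics; as written, it is a gap.
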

\begin{proof}
We apply the push-forward $Rf_{*}$ of sheaves from $f^{-1}\cat O^{alg}_{T}$-modules on $X_{T}$ to $\cat O_{T}^{alg}$-modules on $T$ and then global sections and consider the diagram of $R\Ga \circ Rf_{*}(\cat F^{i})$ for $i \geq 0$. These are dg-modules over $N(\cat O(T))$. 

It is clear that we can replace the $F^{i}$ by a diagram of inclusions of cofibrant objects. We first deal with some homotopy invariant properties which will not be affected by this.

To show the $F^{i}$ are perfect we apply Porta's GAGA result, see Lemma \ref{lem-gaga} below, to show that pushforward commutes with analytification, thus $Rf_{*} \Om^m_{X/T}$ is a pullback (along $h_{S}\circ i_{T}$) of the perfect complex $Rf_{*}\Om^{m}_{X/S}$, and thus perfect. 
Now $F^{i}$ is an iterated extension of perfect complexes and itself perfect as a sheaf. As it is moreover globally of finite presentation we have a perfect $\cat O(T)$-module, as follows from Lemma 8.11 of \cite{Porta14}. 

For the map in the first claim we  
use the weak equivalence $\bar w = \bar v \circ \phi$ from Lemma \ref{lemma-trivialize}. 

The second statement follows from GAGA together with Lemma \ref{lemma-filtmoduli}.

We define the shifted bilinear form using the yoga of fibrant cofibrant replacement as in Lemma \ref{lemma-filtmoduli}, letting $F^{i}$ be defined as the pushforward of the analytic Hodge filtration $Rf_{*}\Om^{\geq i}_{X/T}$.

As we are working with fibrant cofibrant objects we also have a quasi-isomorphism $F^{0}\to f_{*}P_{alg}\cat F^{0}$ and we can use Lemma \ref{lemma-trivialize} to define a second quasi-isomorphism 
$f_{*}P_{alg}\cat F^{0} \to \cat R$, where $\cat R$ is a fibrant cofibrant replacement of $Rf_{*}\C\otimes \cat O^{alg}_{T}$.
Both maps are compatible up to homotopy with the multiplication map. Denote the composition by $v$ and its homotopy inverse by $\bar v$. As $T$ is contractible there is a natural quasi-isomorphism $\phi: V \otimes A \to R\Ga(Rf_{*}\C \otimes \cat O^{alg}_{T})$. Let $\bar w$ be the composition of $\bar v$ with $\phi$, as in Lemma \ref{lemma-trivialize}.

It follows from the proof of Lemma \ref{lemma-constructq} that $\bar w$ is compatible with the shifted bilinear forms.  
\end{proof}

We needed Porta's GAGA theorem for potentially unbounded coherent sheaves in this proof. This is a slight strengthening of Theorem 7.1 in \cite{Porta15}. The following proof was
communicated to us by Mauro Porta.
\begin{lemma}\label{lem-gaga}
Let $f: X \to Y$ be a proper morphism of derived schemes and $\cat F$ a coherent sheaf on $X$. Then the natural map $(Rf_{*}\cat F)^{an} \to Rf_{*}^{an}\cat F^{an}$ is an equivalence.
\end{lemma}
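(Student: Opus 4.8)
The plan is to reduce to the bounded case, where the statement is Theorem 7.5 of \cite{Porta15}, by approximating $\cat F$ by its Postnikov truncations and checking that both $\cat F \mapsto (Rf_{*}\cat F)^{an}$ and $\cat F \mapsto Rf^{an}_{*}\cat F^{an}$ turn the approximating (co)limits into (co)limits.

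First I would pin down the comparison map. Analytification of sheaves is pullback along the flat comparison morphisms $h_{X}\colon (X^{an})^{alg}\ra X$ and $h_{Y}\colon (Y^{an})^{alg}\ra Y$, and the square relating $f$ to $f^{an}$ gives, by adjunction, a base change transformation
\[
\beta_{\cat F}\colon (Rf_{*}\cat F)^{an}=h_{Y}^{*}Rf_{*}\cat F \ra Rf^{an}_{*}h_{X}^{*}\cat F = Rf^{an}_{*}\cat F^{an},
\]
natural in $\cat F$; we must show $\beta_{\cat F}$ is an equivalence. Both sides are coherent, since $f^{an}$ is proper and analytification and proper pushforward preserve coherence \cite{Porta15}.

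Now for the reductions. Writing $\cat F\simeq\colim_{n}\tau_{\leq n}\cat F$, the functor $Rf_{*}$ commutes with filtered colimits because $f$ is proper, hence quasi-compact quasi-separated, and $(-)^{an}=h^{*}$ commutes with all colimits as a left adjoint; likewise on the analytic side. So $\beta_{\cat F}=\colim_{n}\beta_{\tau_{\leq n}\cat F}$, and we may assume $\cat F$ is bounded above. Next, using left completeness of the coherent $t$-structures on $X$, $Y$, $X^{an}$ and $Y^{an}$ --- a local condition which holds since the underlying spaces are finite dimensional --- we write $\cat F\simeq\holim_{n}\tau_{\geq -n}\cat F$ with each $\tau_{\geq -n}\cat F$ now bounded. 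Theorem 7.5 of \cite{Porta15} gives that $\beta_{\tau_{\geq -n}\cat F}$ is an equivalence for each $n$, so it is enough to prove $\beta_{\cat F}=\holim_{n}\beta_{\tau_{\geq -n}\cat F}$.

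The main obstacle is exactly this last point: $Rf_{*}$ and $Rf^{an}_{*}$ are right adjoints and so commute with all limits, but $(-)^{an}=h^{*}$ is a left adjoint, so one has to argue by hand that it commutes with the limits in sight. The mechanism is that $h$ is flat, hence $h^{*}$ is $t$-exact and commutes with truncations, while properness of $f$ and $f^{an}$ makes $Rf_{*}$ and $Rf^{an}_{*}$ of finite cohomological amplitude; consequently the towers $\{\tau_{\geq -n}\cat F\}_{n}$, $\{Rf_{*}\tau_{\geq -n}\cat F\}_{n}$ and their analytifications are, in every fixed cohomological degree, eventually constant. An inverse limit of such a tower has vanishing $\lim^{1}$ and is computed degreewise, so a $t$-exact functor commutes with it; this lets $h_{X}^{*}$ pass through $\holim_{n}\tau_{\geq -n}\cat F$ and $h_{Y}^{*}$ pass through $\holim_{n}Rf_{*}\tau_{\geq -n}\cat F$, yielding $\beta_{\cat F}=\holim_{n}\beta_{\tau_{\geq -n}\cat F}$. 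Combined with the colimit reduction this finishes the proof. The points needing care are the amplitude estimates for proper pushforward of derived (analytic) schemes and the left completeness of the coherent $t$-structures; the latter can, if necessary, be checked on an affine cover of $Y$ and propagated.
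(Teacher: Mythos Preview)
Your strategy and the paper's share the same two ingredients: reduce via truncation to the bounded-below case (Theorem 7.5 in \cite{Porta15}) and use that $f^{an}$ has finite cohomological dimension. The organization differs. The paper does not pass to towers at all: it takes a single fibre sequence $\tau_{\leq k}\cat F \to \cat F \to \tau_{>k}\cat F$, applies Porta's theorem to the bounded-below piece, and uses the amplitude bound to see that in any fixed cohomological degree the $\tau_{\leq k}$ piece contributes nothing once $k$ is small enough. This is more elementary than your tower-and-limit argument and avoids any discussion of left completeness or commuting functors with limits. More importantly, the paper actually \emph{proves} the finite cohomological dimension of $f^{an}$: for each point of $Y^{an}$ one takes a Stein neighbourhood with compact closure, pulls back to get a preimage covered by finitely many Steins (using properness), and computes pushforward by the finite \v Cech complex. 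You flag this as ``needing care'' but give no argument; it is the real content of the lemma.

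There is also a genuine gap in your first reduction. You write $\cat F\simeq\colim_{n}\tau_{\leq n}\cat F$ and say ``likewise on the analytic side'', but $Rf^{an}_{*}$ is a right adjoint and does not commute with filtered colimits formally; in the analytic setting this is not automatic from properness and would itself need the finite cohomological dimension you have not yet established. Fortunately the step is also unnecessary: since Theorem 7.5 already covers the bounded-below case, you can go straight to $\cat F\simeq\holim_{n}\tau_{\geq -n}\cat F$ and run your eventually-constant-in-each-degree argument there (which is essentially what the paper does without the tower language). So drop the colimit step, and supply the \v Cech-over-finite-Stein-cover argument for the amplitude bound.
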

\begin{proof}
One shows first that if $f^{an}_*:Coh^{\heartsuit}(X) \to Coh^+(Y)$  
has finite cohomological dimension, i.e.\ there exists $n$ such that for all $i>n$ and all $\cat F\in Coh^{\heartsuit}(X)$ we have $ R^i(f^{an}_*(\cat F))=0$, then the result holds for unbounded coherent sheaves. 

To see this one follows the proof of Theorem 7.1 in \cite{Porta15}. As in that proof one writes $\cat F$ as $\tau_{\leq n}\cat F \to \cat F \to \tau_{>n}\cat F$. Now one observes that by the result in the bounded below case $(Rf_{*}(\tau_{>k}\cat F))^{an} \simeq Rf^{an}_{*}(\tau_{> k}\cat F)^{an}$. On the other hand the assumption of finite cohomological dimension implies that the cohomology groups of $(Rf_{*}(\tau_{\leq k}\cat F))^{an}$ and $Rf^{an}_{*}(\tau_{\leq k}\cat F)^{an}$ vanish above degree $k+n$. Varying $n$ one sees that the natural map $(Rf_{*}\cat F)^{an} \to Rf_{*}^{an}\cat F^{an}$ is a quasi-isomorphism.

To show that $f^{an}$ has finite coherent cohomological dimension, for every $s \in S^{an}$ choose a Stein open neighbourhood $s\in U \subset S$ such that $\bar{U}$ is compact in $S^{an}$. Lemma 6.2 in \cite{Porta14} implies that base change $X^{an}_U 
\coloneqq U \times_{S^{an}} X^{an}$ can be covered by finitely many Stein opens. Now $X^{an}_U$ is separated (because $f$ is proper and thus separated), so the intersection of Stein spaces remains Stein. We can compute cohomology via \v Cech cohomology attached  to this Stein open cover. The open cover is finite, so the complex must be finite as well and $f^{an}$ has finite coherent cohomological dimension.
\end{proof}

\begin{rk}
We also note that the $F^{i}$ are coherent. This is proven by induction, using that
$Rf_{*}$ is exact and coherent sheaves are a stable subcategory of all sheaves. 
The crucial ingredient is that the $Rf_{*}\Om^{m}_{X/T}$ are coherent as pushforwards of coherent sheaves.

It is shown in Theorem 6.5 of \cite{Porta15} that push-forward preserves bounded below coherent complexes. As Porta explained to the authors, the same arguments as in Lemma \ref{lem-gaga} extend the result to the unbounded case if the morphism has bounded cohomological dimension.
\end{rk}
The lemma says that any $X \to T$ gives a map from $u(T)$ to the algebraic moduli stack $D_n(V,Q)$. To proceed we have to change our target to the analytification of $D_{n}(V,Q)$. We then check the map factors through the substack $U$. Note that here we need the polarization in order to define $U$. 

\begin{propn}\label{propn-localperiod}
If $T \subset S^{an}$ 
is a simply connected derived Stein space contained in the analytification of an affine subspace of $S$ then there is a derived period map \mbox{$T \to U$}, compatible with base change. 
\end{propn}
\begin{proof}
To obtain a map $T \to D_{n}^{an}$ it is enough to construct a homotopy coherent map from $T$ to the diagram $QFilt_{n}^{an} \to Perf^{an} \leftarrow *$, see Remark \ref{rk-firstdescription}. By Lemma \ref{lemma-filtmoduli} there is a map $\tilde \phi: S \to QFilt$ 
representing the filtration on $Rf_{*}\Om_{X/S}$. There is clearly a constant map $\tilde \upsilon$ representing $V \otimes \cat O_{S}$.
We may analytify both maps, compose with the natural maps to $QPerf$ and restrict to $T$ to obtain maps $\phi, \upsilon: T \to QPerf^{an}$. To complete our construction we will exhibit a homotopy equivalence $\eta$ between these maps.  As there is no higher homotopy coherence in a pullback diagram this will complete the construction.

To construct $\eta$ we consider a simplicial hypergroupoid $P_{\bullet}$ with realisation $QPerf$. As analytification commutes with homotopy colimits this will also give a presentation $P_{\bullet}^{an}$ of $QPerf^{an}$.
We will then construct a homotopy equivalence between simplicial maps $uT \to P_{\bullet}$ which gives $\eta$ using Lemma \ref{lemma-simplicial-adjoint}.

As the Tor dimension of $V$ is finite we may restrict ourselves to working with a geometric substack $QPerf^{[a,b]}$ of $QPerf$. To simplify notation we will still refer to this substack as $QPerf$.

First we construct $P_{\bullet}$. We recall that the stack of perfect complexes with finite Tor amplitude has an atlas that represents presentations of perfect complexes as iterated extensions of vector bundles, see the discussion before Lemma 3.10 in \cite{Toen07}.
Using the same methods together with the construction from Lemma \ref{lemma-perfbilinear} one sees that $QPerf$ has an atlas $U$ representing explicit quadratic forms on presentations of perfect complexes.

Then we follow the proof of Proposition 4.5 in \cite{Pridham09a} to obtain $P_{\bullet}$. In loc.\ cit.\ a simplicial affine hypercover for an arbitrary simplicial stack $Z$ is constructed.
The general construction is very delicate, but, using Pridham's notation, we are only interested in the case where the simplicial stack $Z$  is constant and thus $\nu(Z)=1$. This means that given an $r$-atlas $W$ of $QPerf$ we may define a simplicial affine presentation by $P_{i} \cong W^{\times i} \times_{QPerf^{\times i}} QPerf$. In particular $P_{1}$ represents weak equivalences between presentations of perfect complexes with non-degenerate quadratic forms.

We may assume $Rf_{*}\Om_{X/S}^{an}$ is an iteration of free vector bundles, shrinking $T$ to lie in the analytification of a suitable affine cover of $S$ if necessary. (As the Tor-dimension is finite we only have to shrink finitely often.)
Thus the complex $Rf_{*}\Om_{X/T}$ on $uT$ is represented by a map $uT \to W$, which we may view as a map from the constant simplicial affine $uT$ to $P_{\bullet}$.
We may represent $Rf_{*}\C$ similarly. Then the map $\bar w$ from Lemma \ref{lemma-period} is a weak equivalence between the presentations and thus provides a map $\Spec(\cat O(T)) \to P_{1}$, which is enough to give a homotopy between maps from the constant simplicial object $c\Spec(\cat O(T))$ to $P_{\bullet}$.

Using Lemma \ref{lemma-simplicial-adjoint}
we have the desired homotopy $T \to P_{\bullet}^{an}$ in simplicial derived Stein spaces, giving rise to $\upsilon \simeq_{\eta} \phi: T \to QPerf^{an}$, and thus to a map $T \to D_{n}(V,Q)^{an}$

Classical Hodge theory lets us check the open cohomological conditions and we see that we have a factorization $\cat P: T \to U \to D_{n}(V,Q)^{an}$. 

Compatibility with base change is clear for the maps to $QFilt$ and $*$.  Compatibility for $\eta$ is just compatibility for $\bar w$ from Lemma \ref{lemma-trivialize}.
\end{proof}

\subsection{The global derived period map}\label{sect-globalperiod}

Having constructed in the previous section a period map for any small patch $T$ of $S^{an}$, we now glue them together.

We first redo Lemma \ref{lemma-period} for a map between derived stacks. We consider perfect complexes on $S^{an}$ as fibrant cofibrant homotopy cartesian sections as in the discussion preceding Corollary \ref{cor-globalperiodstack}. 
\begin{lemma}\label{lemma-period-2}
Given $f: X^{an} \to S^{an}$ an analytification of a smooth projective map of derived Artin stacks and with $(V, Q)$ as above, we have a diagram of perfect complexes on $S^{an}$ 
\[
F^{n}\to F^{n-1} \to 
\dots \to F^{0}  \ \tilde \to \ \cat R
\] 
where $\cat R$ is a fibrant cofibrant model for $Rf_{*}\C \otimes  {\cat O}^{alg}_{S^{an}}$, the
 $F^{i}$ are fibrant and cofibrant models for $Rf_{*}(\cat F^{i})$,
 and the maps $F^{i}\to F^{i-1}$ are injective. 
Moreover, there is a bilinear form $Q_{F}^{an}$ on $F^{0}$ and the conditions of Theorem \ref{thm-periodstack} are  
satisfied.
\end{lemma}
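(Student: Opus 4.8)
The plan is to globalize the construction from the proof of Lemma~\ref{lemma-period} by running it sheaf-theoretically over a smooth cover of $S^{an}$, using the model for (filtered) perfect complexes on a derived stack as fibrant cofibrant homotopy cartesian sections recalled before Corollary~\ref{cor-globalperiodstack}. Since $dStein$ with its smooth morphisms is a geometric context and $S$ is of finite presentation, $S^{an}$ admits a smooth cover $U_\bullet$ by derived Stein spaces; refining if necessary and using quasi-compactness of $S$, we may take the cover finite and arrange that each $U_i$ meets the hypotheses of Lemma~\ref{lemma-period} (in particular that $\pi_0\pi^0(U_i)$ is a contractible Stein space). Finiteness of the cover also supplies the hypotheses needed for strictification (cf.\ the proof of Theorem~\ref{mainA}).

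On each $U_i$ I would carry out verbatim the argument in the proof of Lemma~\ref{lemma-period}, applied to the pullback $X_{U_i}\to U_i$: form the stupid truncations $\cat F^i$ of the relative de Rham complex $\Om_{X_{U_i}/U_i}$, push forward along $f$, and replace functorially (via the factorizations $Q'$, $P'$, $P_{alg}$ of that proof) to obtain a diagram $F^n\hookrightarrow\dots\hookrightarrow F^0$ of cofibrations of fibrant cofibrant $\cat O_{U_i}$-modules; transport the multiplication from $\cat F^0$ to $F^0$ via the lifts $\eta_{ij}$; and define $Q_F$ by pulling back the (Verdier-duality) trace pairing on $Rf_*\C$ along the equivalence $F^0\xrightarrow{\sim}Rf_*\C\otimes\cat O_{U_i}$ coming from the derived Poincar\'e Lemma~\ref{lemma-poincare2} and the projection formula (Lemma~\ref{lemma-trivialize}). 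The key point is that every ingredient --- the relative cotangent complex and its exterior powers, base change and pushforward along the cover maps, the functorial (co)fibrant replacements, the projection-formula map, and the trace --- is a \emph{natural} construction, hence commutes up to canonical weak equivalence with the restriction functors $\sigma_{ij}^*$. Thus the $F^i$, the inclusions $F^i\hookrightarrow F^{i-1}$, and $Q_F$ assemble into homotopy cartesian sections over $U_\bullet$, i.e.\ genuine perfect complexes on $S^{an}$ equipped with a length $n+1$ filtration and a shifted bilinear form; and the glued equivalence gives $F^0\xrightarrow{\sim}\cat R$, where $\cat R$ is a fibrant cofibrant model for $Rf_*\C\otimes\cat O_{S^{an}}$ (which makes sense globally because $Rf_*\C$ and its trace pairing are defined on all of $t(S)$ and compatible with base change).

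I would then observe that the conditions of Theorem~\ref{thm-periodstack} are local on $S^{an}$, so they follow from their verification on each $U_i$ in Lemma~\ref{lemma-period}: perfectness and coherence of the $F^i$ (via base change and Porta's GAGA, Lemma~\ref{lem-gaga}); injectivity of the $F^i\to F^{i-1}$ and of the induced maps on cohomology after $-\otimes_{\cat O_{S^{an}}}H^0(\cat O_{S^{an}})$, together with local freeness of the resulting cohomology sheaves (classical Hodge theory); non-degeneracy of $Q_F$ (transported from the trace pairing along the weak equivalence $F^0\xrightarrow{\sim}\cat R$); and the orthogonality $Q_F|_{F^i\otimes F^{n+1-i}}=0$, which in fact holds strictly, since the product of sections of $\cat F^i$ and $\cat F^{n+1-i}$ vanishes for degree reasons and this is preserved by the functorial replacements. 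The r\^ole played locally by $V\otimes\cat O_S$ is now played by $\cat R$, which is only locally (not globally) constant; this twisting is exactly the monodromy that will be divided out when passing to $U/\Gamma$.

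The step I expect to be the main obstacle is the gluing bookkeeping: one must verify that the rather rigid, replacement-heavy local construction of $Q_F$ and of the \emph{strict} inclusions $F^i\hookrightarrow F^{i-1}$ really produces compatible homotopy cartesian sections, i.e.\ that the chosen functorial factorizations interact with pullback along the cover maps up to the coherent weak equivalences required by the injective model structure on presections. This is where finiteness of the cover and the naturality of all the constituent functors are essential; no geometric input beyond Lemmas~\ref{lemma-poincare2}, \ref{lemma-trivialize} and \ref{lem-gaga} should be needed.
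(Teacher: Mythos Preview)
Your proposal is correct in substance and arrives at the same conclusion, but it takes a different route from the paper. You run the full construction of Lemma~\ref{lemma-period} locally on each piece $U_i$ of a contractible Stein cover and then glue the resulting data into a homotopy cartesian section. The paper instead works globally from the outset: it chooses an affine cover $\{S_i\}$ of $S$, observes that the levelwise pushforwards $f_*$ on the induced cover of $X^{an}$ assemble into a right Quillen functor on the model category of presections on $S^{an}$ (well-definedness uses base change for quasi-compact quasi-separated maps of derived schemes together with Lemma~\ref{lem-gaga}), and then performs the functorial fibrant cofibrant replacements of Lemma~\ref{lemma-period} once, in that global model category. Because the replacements are taken in the presection model category, the homotopy cartesian structure is automatic and the gluing bookkeeping you flag as the main obstacle simply does not arise. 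Your approach buys nothing extra and costs exactly that bookkeeping; conversely, the paper's approach avoids any need for the $U_i$ to have contractible underlying space, since it never invokes the trivialization $\phi$ of Lemma~\ref{lemma-trivialize} at this stage (only the projection-formula map $v$ is used to compare $F^0$ with $\cat R$, and the identification with $V$ is deferred to the proof of Theorem~\ref{mainA}). You should also note that the trace map and hence $Q_F$ can be defined directly on the global $\cat R$, so there is no need to assemble it from local pieces either.
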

These data look very similar to an object in $\cat D_n(V,Q)(S)$, except that we have replaced $V$ by a sheaf on $S^{an}$.  
\begin{proof} 
As in Lemma \ref{lemma-filtmoduli} we construct pushforwards as homotopy cartesian sections. 
We choose an affine cover $\{S_{i}\}_{i\in I}$ of $S$ which induces a cover $\{X^{an}_{i} = X^{an}\times_{S^{an}}S^{an}_{i}\}$ of $X^{an}$. The pushforward maps on the $X^{an}_{i}$ give a pushforward map of presections and again it follows from base change that the pushforwards are homotopy cartesian. 
To be precise, for $Rf_{*}\cat F^{i}$ we use Proposition 1.4 of \cite{Toen12}. As analytification commutes with pushforward, see Lemma \ref{lem-gaga}, base change for quasi-compact and quasi-separated maps also holds true in analytic geometry.
For $Rf_{*}\C$ we use nonabelian base change for locally compact Hausdorff spaces, see Corollary 7.3.1.18 in \cite{Lurie11a}.
As in Lemma \ref{lemma-filtmoduli} base change also implies that the pushforward construction is well-defined.

Thus we may compute $F^{i}$ as in Lemma \ref{lemma-period}, going through the same yoga of fibrant cofibrant replacement, and obtain a filtration $F^{*}$ of $F^{0}$ and a product on $F^{0}$. Similarly we compute $\cat R$. We use Lemma \ref{lemma-trivialize} we obtain a map of 
homotopy cartesian sections $\bar v: \cat R \to \cat F^{0}$. This is well defined as $\bar v$ is compatible with refinement.
This map is a weak equivalence as it is one locally, so we may find an inverse $v$ as all objects are fibrant cofibrant.
Using the product and the trace from Lemma \ref{lemma-constructq} we can define $Q_{F}^{an}$ compatibly with $v$. All the conditions may be checked locally. 
\end{proof}

We now need to combine this with the monodromy action on cohomology.
We will write $\cat V$ for the graded vector space $V = H^{*}(X_{s}, \Om^{*}_{X_{s}})$ with integral structure $V_{\Z} = H^{*}(X_{s}, \Z)$ and bilinear form $Q$, equipped with the canonical action of $\Ga = \Aut(V_{\Z},Q)$.
By abuse of notation we will also denote by $\cat V$ the corresponding sheaf on any space whose fundamental group maps to $\Ga$. We first state and prove our main theorem for the case that the base $S$ is a derived scheme.

\begin{propn}\label{propn-periodsschemes}
A polarized smooth projective morphism $f: X \to S$ of derived schemes, where $S$ is connected, quasi-separated and locally almost of finite presentation, gives rise to a derived period map $\cat P: S^{an} \to  U/{\Gamma}$, where $U$ is the derived period domain for the pair $(V,Q)$.
\end{propn}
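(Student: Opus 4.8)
The plan is to upgrade the global filtered complex produced by Lemma \ref{lemma-period-2} to a map into $D^{an}_{\Gamma} = D^{an}/\Gamma$ of Proposition \ref{propn-quotientspace}, and then to cut it down to $U/\Gamma$ by an open, and entirely classical, condition. Recall that Lemma \ref{lemma-period-2} produces on $S^{an}$ a filtration $F^{n}\to\dots\to F^{0}$ of perfect complexes, a non-degenerate $2n$-shifted bilinear form $Q_{F}$ on $F^{0}$ vanishing on $F^{i}\otimes F^{n+1-i}$, and a quasi-isomorphism between $F^{0}$ and a fibrant cofibrant model $\cat R$ of $Rf_{*}\C\otimes\cat O_{S^{an}}$, all satisfying the flatness and orthogonality conditions of Theorem \ref{thm-periodstack}. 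This is the datum of a map $u(S^{an})\to D_{n}(V,Q)$ (in the sense of Corollary \ref{cor-globalperiodstack} applied to $u(S^{an})$, hence of a map $S^{an}\to D^{an}$ via Theorem \ref{thm-appadjoint}) in all respects but one: $F^{0}$ is identified with $(\cat R, Q_{R})$ rather than with the constant $(V\otimes\cat O_{S^{an}},Q)$. So the whole content is to recognise $(\cat R, Q_{R})$ as the bundle of Hodge structures associated with the monodromy $\Gamma$-torsor on $S^{an}$.

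For this we invoke Corollary \ref{cor-formality}: the complex $Rf_{*}\C$ with its bilinear form, viewed as an $\Om t(S)$-representation, is equivalent to a genuine representation of $\pi_{1}(t(S))$, and by construction this factors through $\Gamma = \Aut(V_{\Z},Q)$ acting on $(V,Q)$. Being pulled back from $t(S)$, this representation is classified by a map $\mu\colon S^{an}\to B\Gamma$ (as $B\Gamma$ is a constant stack), and $\gamma^{an}\circ\mu$ classifies $(\cat R, Q_{R})$ in $QPerf^{an}$, where $\gamma\colon B\Gamma\to QPerf$ is the map of Proposition \ref{propn-quotientspace} sending the basepoint to $(V,Q)$. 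The filtration $F^{*}$ together with $Q_{F}$ then upgrades the composite $S^{an}\xrightarrow{\gamma^{an}\mu} QPerf^{an}$ to a map into $QFilt^{an}$ lying over it. Since, by Proposition \ref{propn-quotientspace} and the fact that analytification commutes with homotopy limits, $D^{an}_{\Gamma}$ is the homotopy pullback of $QFilt^{an}\to QPerf^{an}\leftarrow B\Gamma$, these data assemble to the desired $\cat P\colon S^{an}\to D^{an}_{\Gamma}$.

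It then remains to see that $\cat P$ factors through the open substack $U/\Gamma\subset D^{an}_{\Gamma}$. By Lemma \ref{lemma-open} it is enough to check this after applying $\pi_{0}\pi^{0}$; since analytification commutes with truncation and $U/\Gamma$ was defined (Proposition \ref{propn-quotientspace}) as the image of the locus $U_{h}\subset D^{an}_{h}$ cut out by the second and third Hodge--Riemann relations, the claim reduces to the classical fact that the Hodge filtrations $R^{k}f_{*}\cat F^{i}$ on $H^{k}(X_{s})$ satisfy those relations, exactly as recalled in Section \ref{sect-classicaldomain}. Equivalently one may argue by gluing: cover $t(S)$ by opens $T_{\alpha}$ with $\pi_{0}\pi^{0}(T_{\alpha})$ contractible Stein, take the local period maps $\cat P_{\alpha}\colon T_{\alpha}\to U$ of Section \ref{sect-localperiod}, observe that on $T_{\alpha\beta}$ the underlying filtered complex $Rf_{*}\cat F^{*}$ is canonical while the trivialisation $w'=v\circ\phi$ of $F^{0}$ changes by the monodromy cocycle valued in $\Gamma$, and conclude by descent for the stack $U/\Gamma$.

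The main obstacle is the identification $(\cat R, Q_{R})\simeq\gamma^{an}\circ\mu$ in $QPerf^{an}$: it says precisely that the homotopy-coherent system of local trivialisations of $Rf_{*}\C\otimes\cat O$ is governed by the arithmetic group $\Gamma$ and by nothing more, which is where the absence of higher monodromy --- Corollary \ref{cor-formality}, ultimately Deligne's formality --- is indispensable, since without it one would a priori obtain only a representation of the full loop group $\Om t(S)$ and no map to $B\Gamma$. Secondary technical points are the strictification needed to make $Q_{F}$ a strict rather than merely homotopy-coherent bilinear form globally, carried out as in Lemma \ref{lemma-period}, and the fact that $D_{\Gamma}$ is not locally of finite presentation, so that one must work with $D^{an}_{\Gamma}$ through its homotopy-limit description rather than by analytifying $D_{\Gamma}$ directly.
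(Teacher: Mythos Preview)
Your proposal follows essentially the same strategy as the paper: use Lemma~\ref{lemma-period-2} for the filtered datum, Corollary~\ref{cor-formality} to reduce the monodromy to $\Gamma$, the pullback description $D_{\Gamma} = QFilt \times^{h}_{QPerf} B\Gamma$, and finally classical Hodge theory for the open condition cutting out $U/\Gamma$.

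The one difference worth flagging is that you work on the analytic side, assembling maps $S^{an}\to QFilt^{an}$ and $\mu\colon S^{an}\to B\Gamma$ with a homotopy in $QPerf^{an}$, whereas the paper works entirely on the algebraic side: it constructs $u(S^{an})\to QFilt$, an explicit map $\kappa\colon u(S^{an})\to B\Gamma$ built from a good Stein hypercover $\{T_{i}\}$ of $S^{an}$ together with the identification $N(I)\simeq t(S^{an})$, and a homotopy between the two composites in $QPerf$, and only then passes to the analytic side. The paper's route is technically safer in this framework: your phrase ``$\gamma^{an}\circ\mu$ classifies $(\cat R,Q_{R})$ in $QPerf^{an}$'' presupposes a moduli interpretation of $QPerf^{an}$ that the paper explicitly declines to use (see the discussion at the end of Section~\ref{sect-anadomain}); here one only knows how to produce maps into $QPerf^{an}$ via Theorem~\ref{thm-appadjoint} from algebraic data, so the comparison homotopy must be built algebraically first and then transported. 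Likewise, your assertion that the $\pi_{1}$-representation yields $\mu\colon S^{an}\to B\Gamma$ is a compressed version of the paper's explicit hypercover construction of $\kappa$. None of this is wrong, but it leans on identifications the paper takes pains to make precise rather than assume.
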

\begin{proof} 
We will choose a suitable simplicial derived Stein space $T_{\bullet}$ with $|T_{\bullet}|\simeq S^{an}$, cf.\ the discussion at the beginning of Section \ref{sect-localperiod-1}.  Then we construct a map $uT_{\bullet} \to D_{\Gamma}$ such that the associated map to $D_{\Gamma}^{an}$ factors through $U/{\Gamma}$, using the description in Proposition \ref{propn-quotientspace}.

There is a map $\gamma: B\Ga \to QPerf$ with $\gamma^{*}\cat U = \cat V$ for the universal perfect complex $\cat U$. Here the constant stack $B\Ga$ is just the stackification of the constant presheaf $B\Ga$. 

By the proof of Proposition \ref{propn-quotientspace} it suffices to construct maps $\phi: S^{an} \to QFilt^{an}$ and $\upsilon: S^{an} \to B\Gamma^{an}$ together with a homotopy between the two compositions $S^{an} \to QPerf^{an}$. These maps will represent $Rf_{*}(\cat F^{\bullet})$ and $\cat V$, respectively.

We pick a good hypercover $T_{\bullet}$ of $S^{an}$. By this we mean $T_{\bullet} \to S^{an}$ is an effective epimorphism and if we denote by $(T_{i})_{i \in I}$ the diagram of components of $T_{\bullet}$ then all $t(T_{i})$ are contractible.

Next we show the monodromy induces a canonical map of derived analytic stacks $S^{an} \to B\Ga^{an}$. We know $\hocolim_{I} * \simeq t(S^{an})$ by the main result of \cite{Dugger04b}, thus $Rf_{*}\C$ gives a sheaf on $\hocolim_{I} *$. By formality we may write it as a graded local system classified by a map $\hocolim_{I}* \to B\Ga$ and for a model $\hocolim_{J} *$ of $B\Ga$ we may induce this by a map $I \to J$. Thus, via the maps $uT_{i}\to *$ of derived algebraic schemes, we obtain a map $uT_{i} \to B\Ga_{i}$ of simplicial derived affine schemes. Applying Lemma \ref{lemma-simplicial-adjoint}
and taking realizations we obtain: 
\[
\kappa:  
\hocolim T_{i} \to \hocolim_{I} * \to \hocolim_{J} *\to B\Ga^{an}.
\]
Note that we are comparing homotopy colimits in simplicial sets and derived stacks. But the inclusion of simplicial sets as constant stacks preserves homotopy colimits since it is a left Quillen functor into the local projective model structure on simplicial presheaves, which is a model for derived stacks.
 
We let $\gamma$ be the map $B\Gamma \to QPerf$ representing the $\Gamma$-representation $\cat V$ with its quadratic form and let $\upsilon: S^{an} \to QPerf^{an}$ be the map induced by $\gamma^{an} \circ \kappa$. 

We also define a map $\phi = (\pi \circ \tilde \phi)^{an}: S^{an} \to QFilt^{an} \to QPerf^{an}$. Here $\pi$ is just the natural forgetful map and $\tilde \phi$ is the map from Lemma \ref{lemma-filtmoduli}. 
 
 It remains to produce a homotopy $\eta$ from $\phi$ to $\upsilon$.
 
We have a comparison morphisms $v: Rf_{*}\Om^{an}_{X/S} \to Rf_{*}\C\otimes \cat O_{S^{an}}$ 
from Lemma \ref{lemma-period-2}, which we may interpret as a strict morphism of homotopy cartesian sections after pulling back to $T_{\bullet}$. On the other hand we can trivialise $Rf_{*}\C \otimes \cat O_{S}$ on $T_{\bullet}$. This gives an object in the abelian category of diagrams of chain complexes, equipped with a Lefschetz operator, so Theorem \ref{thm-deligne} provides a strict quasi-isomorphism of homotopy cartesian sections $\varphi:  H(Rf_{*}\C) \otimes \cat O^{alg}_{T_{\bullet}} \to Rf_{*}\C \otimes \cat O^{alg}_{T_{\bullet}}$. By construction there is also a quasi-isomorphism $\lambda: \cat V \otimes \cat O^{alg}_{T_{\bullet}} \to H(Rf_{*}\C) \otimes \cat O^{alg}_{T\bullet}$ which is compatible with base change as transition functions for $\cat V$ are induced by those for $H(Rf_{*}\C)$.

We may thus consider the composition of $v$ and homotopy inverses of $\varphi$ and $\lambda$ as a morphism of perfect complexes over $\cat O(T_{\bullet})$ and unravelling we obtain exactly a map $p: uT_{\bullet} \times \Delta^{1} \to P_{\bullet}$. 
(Recall from the proof of Proposition \ref{propn-localperiod} that $P_{i}$ represents presentations of $i+1$ perfect complexes with quadratic forms and explicit quasi-isomorphisms between them.)

Thus we obtain a homotopy between two maps $uT_{\bullet} \to P_{\bullet}$ in simplicial derived affine schemes. By Lemma \ref{lemma-simplicial-adjoint} this gives the desired homotopy between maps $S^{an} \simeq |T_{\bullet}| \to |P^{an}_{\bullet}| \simeq QPerf^{an}$, completing the construction of the period map.

The map is independent of our choice of $T_{\bullet}$. Recall that it suffices to check it is well-defined under refinement. 
Thus, given a smooth map
$r: T'_{\bullet} \to T_{\bullet}$ between good hypercovers we claim that the homotopy cartesian section $p': uT'_{\bullet} \times \Delta^{1} \to P_{\bullet}$ obtained by the above construction applied to $T' \to S$ is equivalent to $r^{*}p$. The map $r$ induces natural weak equivalences between the objects by the usual base change theorem. The morphism
 $\lambda$ is compatible with base change by construction. The same is true for $\varphi$ as the Lefschetz operator is compatible with $r$. For the morphism $\bar v$ we refer to Lemma \ref{lemma-period-2}. Moreover, everything is functorial with respect to maps in $T_{\bullet}$. 
\end{proof}

\begin{thm}\label{mainA}
Let $f: X \to S$ be a polarized smooth projective morphism of derived geometric stacks where $S$ is connected and of finite presentation.
Then there is a derived period map $\cat P: S^{an} \to  U/{\Gamma}$, where $U$ is the derived period domain for the pair $(V,Q)$.
\end{thm}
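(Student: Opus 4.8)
The plan is to reduce to Proposition \ref{propn-periodsschemes} by presenting the geometric stack $S$ as a homotopy colimit of derived affine schemes and running the construction over this presentation. First I would choose, using that $S$ is geometric and of finite type, a smooth affine atlas of $S$ and let $S_\bullet \to S$ be the resulting \v Cech-type simplicial diagram (iterating the construction on the terms, and flattening the iterated nerve, if the diagonal of $S$ is not affine), so that each $S_n$ is a derived affine scheme of finite presentation and $S \simeq \hocolim_n S_n$. Base changing $f$ along $S_n \to S$ produces $f_n : X_n = X\times_S S_n \to S_n$, which is again polarized, smooth and projective, with $X_n$ a derived scheme (since $f$ is smooth, $S_n$ is affine and $S_n \to S$ is flat). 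Since analytification of derived Artin stacks is defined as a left Kan extension, see Section \ref{sect-dan}, it preserves homotopy colimits, whence $S^{an} \simeq \hocolim_n S_n^{an}$ and $X^{an} \simeq \hocolim_n X_n^{an}$, compatibly with $f^{an}$; likewise $u$ preserves homotopy colimits by Appendix \ref{appendix}, so $u(S^{an}) \simeq \hocolim_n u(S_n^{an})$.

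Next I would apply Proposition \ref{propn-periodsschemes} to each $f_n$. Because the target $U/\Gamma$ is built from the \emph{universal} arithmetic group $\Gamma = \Aut(V_\Z, Q)$, which does not depend on the family, every $f_n$ gives a period map $\cat P_n : S_n^{an} \to U/\Gamma$ into one and the same stack. These maps are compatible along the structure maps of $S_\bullet$: the $B\Gamma$-component $u(S_n^{an}) \to B\Ga$ is the canonical morphism induced by $\pi_1 \to \Gamma$ and is independent of $f_n$; the $QFilt$-component $u(S_n^{an}) \to QFilt$ comes from pushing forward the stupidly truncated relative de Rham complex and is compatible with base change by Lemma \ref{lem-gaga} together with base change for quasi-compact quasi-separated derived schemes; and the comparison homotopy inside $QPerf$ uses Deligne's canonical quasi-isomorphism $\phi$ from Lemma \ref{lemma-trivialize}, which is natural. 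Hence the $\cat P_n$ assemble into a map of simplicial derived analytic stacks $\cat P_\bullet : S_\bullet^{an} \to U/\Gamma$ with constant target, and passing to homotopy colimits yields $\cat P = \hocolim_n \cat P_n : S^{an} \to U/\Gamma$. Equivalently, in the algebraic model one glues the $u(\cat P_n)$ to a map $u(S^{an}) \simeq \hocolim_n u(S_n^{an}) \to D_\Gamma$ and applies Theorem \ref{thm-appadjoint} to obtain $S^{an} \to D_\Gamma^{an}$; the open positivity conditions cutting out $U/\Gamma$ inside $D_\Gamma^{an}$ may be verified after restriction along the atlas, where they reduce to classical Hodge theory on the smooth projective fibres, so the map factors through $U/\Gamma$.

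In practice it is cleaner not to leave the simplicial picture at all: one fixes once a good hypercover of $S^{an}$ obtained by combining a smooth atlas of the geometric analytic stack $S^{an}$ with good Stein covers of its terms (all pieces and intersections having contractible underlying space), writes $u(S^{an}) = \hocolim u(T_i)$ over it, and repeats the proof of Proposition \ref{propn-periodsschemes} almost verbatim, using the stack-level pushforward Lemma \ref{lemma-period-2} in place of Lemma \ref{lemma-period} and Corollary \ref{cor-globalperiodstack} to describe maps into $D_n(V,Q)$ from the non-affine $u(S^{an})$. The hypothesis that $S$ is of finite type is used exactly here: it makes the relevant covers finite, so that strictification of quasi-coherent sheaves in the sense of \cite{Spitzweck10} applies (cf.\ the discussion before Corollary \ref{cor-globalperiodstack}), and it supplies the quasi-compactness needed for the topological gluing.

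The main obstacle is the coherence required in the gluing step: turning the levelwise maps $\cat P_n$ --- each defined only after a string of non-canonical fibrant--cofibrant replacements (the functors $P$, $P'$, $P_{alg}$, $Q'$ from the proof of Lemma \ref{lemma-period}) --- into an honest map of simplicial objects rather than a family compatible only up to incoherent homotopy. This is precisely what the stack-level Lemma \ref{lemma-period-2} is designed to handle: by choosing functorial factorizations once, on the category of presections over a fixed hypercover, the pushforward, the filtration $F^\bullet$, the bilinear form $Q_F$ and the trivialization $w$ are produced simultaneously and functorially in the base, so no further gluing is needed and the output is directly an object of $\cat D_n(V,Q)(uS^{an})$ as in Corollary \ref{cor-globalperiodstack}. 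It then remains only to feed this object into Theorem \ref{thm-appadjoint} and check the open Hodge--Riemann conditions, as above.
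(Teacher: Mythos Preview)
Your overall strategy matches the paper's: present $S$ as a homotopy colimit of derived schemes, pass to analytifications and good Stein hypercovers, build the map $\kappa: u(S^{an}) \to B\Gamma$, use Lemma \ref{lemma-period-2} for the $QFilt$ component, and check the open conditions. But there is one genuine gap.

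The issue is the trivialization $w$. You write that Lemma \ref{lemma-period-2} produces ``the filtration $F^\bullet$, the bilinear form $Q_F$ and the trivialization $w$ \dots\ simultaneously and functorially.'' It does not. Lemma \ref{lemma-period-2} only supplies a quasi-isomorphism $F^{0}\simeq \cat R$ where $\cat R$ is a model for $Rf_{*}\C\otimes\cat O_{S^{an}}$; it does \emph{not} identify $\cat R$ with $\cat V\otimes\cat O_{S^{an}}$. For a derived scheme this last identification follows from Corollary \ref{cor-formality} applied to the underlying topological space $t(S^{an})$. For a geometric stack there is no such underlying space, and sheaves on $S^{an}$ are not the same as sheaves on $\hocolim_{j}t(T_{j})$, so you cannot simply invoke Deligne's canonical $\phi$ from Lemma \ref{lemma-trivialize} (which is stated only for simply connected $T$) ``by naturality.'' Nor does the levelwise gluing fix this: even granting that $\phi$ is natural on each $S_{n}^{an}$, you still owe the higher coherences for the resulting simplicial map, and these are exactly what the identification $\cat C\simeq\kappa^{*}\cat V$ on the stack would provide.

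The paper closes this gap by an extra argument you have not supplied: one views $\cat C\coloneqq Rf_{*}\C$ as a homotopy cartesian section of the constant diagram $j\mapsto\Ch$ over the hypercover, uses a strictification theorem (the results of \cite{Holstein2}) to identify this with an object of $\Ch^{\hocolim_{j}t(T_{j})}$, i.e.\ an $\infty$-local system on the topological realization of the hypercover, and only then applies Theorem \ref{thm-deligne} with the Lefschetz operator to conclude that this $\infty$-local system is formal, hence a genuine graded local system pulled back from $B\Gamma$. The finite-type hypothesis enters here, not (only) for the strictification of quasi-coherent sheaves you mention, but to guarantee that the index category $J$ can be taken finite so that the strictification theorem of \cite{Holstein2} applies (finitely many $S_{i}$ by quasi-compactness, bounded simplicial truncation by \cite{Pridham09a}, bounded Stein hypercovers). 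Once $\cat C\otimes\cat O_{S^{an}}\simeq(\gamma\circ\kappa)^{*}\cat U$ is established globally, the rest of your outline goes through as written.
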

\begin{proof}
We can write $S \simeq \hocolim S_{i}$ for a simplicial affine scheme $S_{i}$, e.g.\ using  \cite{Pridham09a}.
Then $S^{an} \simeq \hocolim S_{i}^{an}$ since analytification of stacks is defined as a left Kan extension. Using contractible Stein hypercovers of the $S_{i}^{an}$ we may also write $S^{an} \simeq \hocolim_{j \in J} T_{j}$.

The map $f$ induces maps $f_{i}: X\times_{S}S_{i} \to S_{i}$, and since $f$ is smooth and projective so are the $f_{i}$.  
Moreover they all have homeomorphic fibres, so we can fix $V, Q$ and $n$, and thus the derived period domain $U$. 

We know that $Rf_{*}f^{-1}\cat O_{S^{an}}^{alg}$ is quasi-isomorphic to $Rf_{*}\C \otimes \cat O_{S^{an}}^{alg}$ as this is true locally by Lemma \ref{lemma-trivialize}.
It remains to show that $Rf_{*}\C$ is quasi-isomorphic to a locally constant sheaf, i.e.\ it is obtained by pullback from some sheaf on $B\Ga$.
 We have to be somewhat careful as we pass from the stack $S^{an}$ to the topological space $\hocolim t(T_{i})$ and, unlike in the case of a derived scheme, sheaves on $S^{an}$ cannot be described as sheaves on $\hocolim t(T_{j})$ in general. So we first show that $Rf_{*}\C$ is a pull-back of an infinity local system on $\hocolim_{J} t(T_{j}) \simeq \hocolim_{J} *$, and then that this infinity local system is a plain (graded) local system.

We know from Lemma \ref{lemma-trivialize} that $Rf_{*}\C$ is quasi-isomorphic to $\underline V$ on every $T_{j}$,
thus $Rf_{*}\C\otimes \cat O^{alg}_{T_{j}}$ is just the pull back of $V$ along $u(T_{j}) \to *$.
This induces transition functions 
between the $V$ which are quasi-isomorphisms. Thus we obtain a homotopy cartesian section $\cat C$ of the constant diagram indexed by $J$ that sends every $j$ to the model category of chain complexes. By strictification (e.g.\ Theorem 1 in \cite{Holstein2}) the category of these homotopy cartesian sections is equivalent to the homotopy limit of the constant diagram $J \to \dgCat$ that sends every $j$ to the dg-category $\Ch$ of (fibrant cofibrant) chain complexes. Now we recall that there is a co-action of simplicial sets on any model category, written $(K, \cat D) \mapsto \cat D^{K}$, see Chapter 16 of \cite{Hirschhorn03}.
This is a Quillen bifunctor, and thus $\holim_{J}\Ch \simeq \Ch^{\hocolim_{J}*} \simeq \Ch^{\hocolim_{J}t(T_{j})}$. (Here we use that all the $t(T_{j})$ are contractible.)
Thus $\cat C$ can be considered as an element of $\Ch^{\hocolim t(T_{j})}$, which is the category of infinity local systems on $\hocolim t(T_{j})$, see \cite{Holstein1}.
(To apply the result of \cite{Holstein2} we need to consider a diagram indexed by a direct category, without infinite ascending chains of morphisms. By Lemma 3.10 of \cite{Pridham09a} the simplicial derived scheme $S_{\bullet}$ is determined by a finite truncation. Moreover, as $S$ is quasi-compact it is enough to consider finitely many $S_{i}$. For each $i$ we can choose the hypercovers $T_j^{(i)}$ of the Stein spaces $S_{i}^{an}$ to be bounded. Restricting to non-degenerates we may thus assume that the diagram $J$ is in fact finite.)

Now $\cat C$ can equivalently be considered as a homotopy locally constant sheaf on $\hocolim t(T_{j})$, by Theorem 12 of \cite{Holstein2}. (The considerations above make sure that $\hocolim t(T_{j})$ is equivalent to a homotopy colimit of a finite diagram of points, and thus satisfies the conditions of that theorem.)

Thus $\cat C$ lives in the bounded derived category of an abelian category, namely sheaves of abelian groups on $\hocolim t(T_{j})$. We can apply Theorem \ref{thm-deligne}, since throughout all the equivalences $\cat C$ kept its Lefschetz operator. We deduce that $\cat C$ is a direct sum of its cohomology groups. So it is in fact a local system, and given by a representation $\cat V$ of the fundamental group of $\hocolim t(T_{j})$, which we may view as a sheaf pulled back via a map
\[\kappa:  
\hocolim T_{j} \to \hocolim_{J}*  \to B\Ga^{an}
\]
as in Proposition \ref{propn-periodsschemes}.
We note that all automorphisms preserve the extra structure on $V$, so the classifying map of $\cat V$ still factors through $\Ga = \Aut(V_{\Z}, Q)$. Thus $Rf_{*}\C \otimes \cat O^{alg}_{T_{\bullet}}$ is weakly equivalent to $\cat V \otimes \cat O^{alg}_{T_{\bullet}}\simeq (\gamma\circ \kappa)^{*}\cat U$ in the homotopy category. 

To complete the proof we proceed as in Proposition \ref{propn-periodsschemes}. 
We recall that Lemma \ref{lemma-filtmoduli} 
provides a map from $S$ to $QFilt$, and there is a natural map 
\[
\bar w: \cat V \otimes \cat O^{alg}_{T_{\bullet}} \to H(Rf_{*}\C) \otimes \cat O_{T_{\bullet}}^{alg} \to Rf_{*}(f^{-1}\cat O_{T_{\bullet}}^{alg}) \to Rf_{*}\Om_{X/T_{\bullet}}.
\]
 The map is a quasi-isomorphism as we can check locally by Lemma \ref{lemma-trivialize}. We may consider a homotopy inverse and obtain $w: F^{0} \to \cat V \otimes \cat O_{T_{\bullet}}^{alg}$ as a strictly compatible map of homotopy cartesian sections.
This gives a map $uT_{\bullet} \times \Delta^{1} \to P_{\bullet}$ and we conclude by Lemma \ref{lemma-simplicial-adjoint} that this is the desired homotopy. 

We observe that the map to $D_{\Ga}$ we have constructed factors through $U/\Gamma$ by classical Hodge theory.

The proof that this map is well-defined is just as in Proposition \ref{propn-periodsschemes} as we have considered compatibility with smooth base change throughout.
\end{proof}

The following example is crucial:
\begin{eg}\label{eg-moduli}
The universal example is given by taking $S$ to be the moduli stack of polarized schemes described in Example 3.39 of \cite{Pridham10}, or rather any quasi-compact component $\mathfrak M$ of its substack with smooth fibres . This is a derived 1-geometric stack and has a universal family $X \to \mathfrak M$.

It follows from \cite{Pridham10} that $\mathfrak M$ is locally almost of finite presentation, so by Theorem \ref{mainA} we have a derived period map $\mathfrak M \to U_{\Ga}$.
\end{eg}

\subsection{Comparison with the classical period map}\label{sect-comparison}

We now need to check that our construction recovers the usual period map in the underived setting. 
This is covered by the following theorem.
\begin{thm}\label{thm-trunc}
Consider $f: X \to S$ a polarized smooth projective map of derived schemes and denote the product of the classical period maps associated to $\pi_{0}\pi^{0}(f)$ by $P: \pi_{0}\pi^{0}(S^{an}) \to \pi_{0}\pi^{0}(U)$. Then $P = \pi_{0}\pi^{0}\cat P$. 
\end{thm}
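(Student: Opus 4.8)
The plan is to apply the truncation functor $\pi_0\pi^0$ to the construction of $\cat P$ carried out in Sections \ref{sect-localperiod} and \ref{sect-globalperiod}, using throughout that analytification commutes with $\pi^0$ and $\pi_0$ (Section \ref{sect-dan}), that pushforward along a proper map commutes with analytification (Lemma \ref{lem-gaga}), and that both pushforward and analytification commute with homotopy colimits. Since $S^{an}$ is a homotopy colimit of contractible Stein pieces $T_j$, it then suffices to identify $\pi_0\pi^0$ of each local period map $T_j\to U$ with the restriction of $P$, compatibly with the gluing and with the $\Gamma$-action. Note also that a filtration of the single complex $V=H^*(X_s,\Om^*_{X_s})$ is exactly a compatible family of filtrations on the $H^k$, so that $\pi_0\pi^0(U)$ is the product of the classical period domains and there is no mismatch between the single map $\cat P$ and the product $P$.

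On a patch $T$ the derived period map is the point $(F^\bullet,w_F,Q_F)\in\cat D_n(V,Q)(uT)$ of Lemma \ref{lemma-period}, with $F^i\simeq R\Ga\circ Rf_*(\cat F^i)$ and $\cat F^i=\Om^{\geq i}_{X/T}$. Applying $\pi_0\pi^0$, the base-change square used in the proof of Lemma \ref{lemma-period} (Proposition 1.4 of \cite{Toen12}) together with Lemma \ref{lem-gaga} identifies the truncation of $Rf_*\cat F^i$ with $R(\pi_0\pi^0 f)^{an}_*\,\Om^{\geq i}_{\pi^0(X_T)/\pi^0(T)}$, hence on cohomology sheaves with the classical Hodge bundles; by Lemma \ref{lemma-derham} the underived relative de Rham complex is the ordinary holomorphic one, so these are precisely the filtered sheaves $R^k(\pi_0\pi^0 f)_*\cat F^i\subset R^k(\pi_0\pi^0 f)_*\Om^\bullet$ of Griffiths' construction recalled in Section \ref{sect-classicaldomain}. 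The trivialization $w'=v\circ\phi$ truncates on cohomology to the Ehresmann identification $H^k(X_s)\cong H^k(X_0)$ — which is exactly what Deligne's canonical map $\phi$ induces in cohomology — and $Q_F$ truncates to the polarization form $Q_k$. So the truncated datum over $T$ is $P|_{\pi_0\pi^0(T)}$.

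Globally one must also check that the classifying map $\kappa\colon u(S^{an})\to B\Gamma$ truncates to the classifying map of the monodromy representation $\pi_1(t(S))\to\Gamma=\Aut(V_\Z,Q)$ on $H^*(X_s,\Z)$ — which holds because over each $T_j$ it is built from the locally constant sheaf $Rf_*\C$, whose underlying representation is the classical monodromy by Corollary \ref{cor-formality} — and that the $\Gamma$-action on $D_n(V,Q)$ truncates to the action used classically to form $U/\Gamma$. Gluing the local identifications over the index category $J$ of Theorem \ref{mainA} and descending along the quotient by $\Gamma$ then gives $\pi_0\pi^0\cat P=P$.

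The main obstacle is bookkeeping rather than anything conceptual: the functor $u$ does not commute with truncation, since $u(T)=R\Spec\cat O^{alg}(T)$ is far larger than $\pi^0(T)$, so one cannot simply apply $\pi_0\pi^0$ to the algebraic map $u(S^{an})\to D_\Gamma$ and read off the answer. The clean way around this is to run the comparison entirely on the analytic side, with $\cat P\colon S^{an}\to U/\Gamma$ and target $D_n(V,Q)^{an}$ directly, using Lemma \ref{lemma-period-2} in place of Lemma \ref{lemma-period}, since $Rf^{an}_*$ commutes with $\pi^0$, $\pi_0$ and analytification; the classical period datum then drops out by the argument above with no reference to $u$. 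Slight additional care is needed at non-reduced points of $\pi^0(S)$, but this is already covered by the nilpotent-filtration argument of Lemma \ref{lemma-poincare2}.
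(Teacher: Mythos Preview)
Your local identification (second paragraph) is essentially the paper's ``second claim'': the truncated filtration data $(F^{\bullet}, w_{F}, Q_{F})$ over $\pi_{0}\cat O(uT)$ is the classical Hodge filtration, and this is verified via base change and GAGA just as you say. But there is a genuine gap before that step. The derived map $\cat P: T \to D^{an}$ is \emph{defined} as the image of $\cat P^{\#}: uT \to D$ under the correspondence of Theorem~\ref{thm-appadjoint}. To compute $\pi_{0}\pi^{0}(\cat P)$ from the truncation of the data $\cat P^{\#}$, you must first show that this correspondence is natural with respect to $\pi_{0}\pi^{0}$, i.e.\ that it reduces to the underived universal property of analytification. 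The paper isolates this as its ``first claim'' and dispatches it by observing that $u$ and $(-)^{an}$ commute with $\pi_{0}\pi^{0}$ and that Lemma~\ref{lemma-affine-adjoint} specializes to the underived adjunction. You never establish this, so strictly speaking you have identified the truncated \emph{data} but not the truncated \emph{map}.

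Your perceived obstacle is a mis-statement. On a derived Stein space $T$ one has $\pi_{0}\pi^{0}(uT) = \Spec(\pi_{0}\cat O(T)) = u(\pi_{0}\pi^{0}T)$, so $u$ \emph{does} commute with $\pi_{0}\pi^{0}$; what you correctly sense is only that $uT$ and $T$ live in different categories, so $\pi_{0}\pi^{0}(\cat P^{\#})$ and $\pi_{0}\pi^{0}(\cat P)$ have different sources and must be compared via the underived adjunction. Your proposed workaround --- run everything on the analytic side using Lemma~\ref{lemma-period-2} and never mention $u$ --- does not work within this paper's framework: there is no analytic moduli interpretation of $D^{an}$ available here (that is deferred to \cite{HolsteinF}), so the only way the paper knows how to produce or to truncate the map $T \to D^{an}$ is through Theorem~\ref{thm-appadjoint}, which forces you back to $u$. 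Finally, the global monodromy verification in your third paragraph is unnecessary: the paper's proof is entirely local, as the comparison with $P$ can be checked on patches $T$ and the target replaced by $D^{an}$.
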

\begin{proof} 
It is enough to check this locally, so let us replace $S^{an}$ by $T$ and assume $T$ is derived Stein.
We may replace the target by $D^{an}$, and we know from Theorem \ref{thm-periodstack} that
$\pi_{0}{\pi^{0}}D^{an}$ is the product of the usual closures of the period domains. 
Then we consider the map $\pi^{0}(\cat P): \pi^{0}T \to \pi_{0}\pi^{0}D^{an}$, where $\cat P$ is the derived period map.

We recall that the period map classifies the Hodge filtration on $V$, i.e.\ it is given by a map $\pi_{0}(T)$ to the (underived) analytic flag variety. We are now in the underived and non-stacky situation where the technical difficulties we were dealing with before disappear.

In particular it follows from inspecting the explicit construction that the analytic flag variety is the analytification of the flag variety $Fl$ (which is indeed a smooth variety rather than an Artin stack). Moreover the period map $\pi^{0}(T) \to Fl^{an}$ arises via an honest adjunction from $\pi^{0}(T)^{alg} \to Fl$.

We may construct the flag variety as a pull-back analogously to the derived version. The period map is then defined by $\theta: \pi_{0}(T) \to \pi_{0}\pi^{0}(QFilt)$ and a natural isomorphism $\epsilon$ between $\pi \circ \theta$ and the canonical map $V:  \pi_{0}(T) \to \pi_{0}\pi^{0}(QPerf)$.
It is clear that $\pi_{0}\pi^{0}(\tilde \phi) \cong \theta$ as it is the analytification of the algebraic classifying map. To compare $\pi_{0}\pi^{0}(\eta)$ and $\epsilon$ we note that both $\eta$ and $\epsilon$ arise via adjunction from maps classifying corresponding modules on $uT$ respectively $\pi_{0}(uT)$.

Thus we see that $\pi_{0}\pi^{0}(\cat P)$ recovers the classical period map.
\end{proof}

Together with the next section this shows that the map we construct deserves to be called the derived period map. This also shows that the underlying underived map of $\cat P$ is smooth if $S$ is smooth, but it is clear that $\cat P$ is not strong, so it is not smooth in the sense of derived analytic geometry.

\subsection{Differential of the period map}\label{sect-derivative}

In this section we will compute the differential of our period map $\cat P$
by identifying it with the differential of the infinitesimal period map of derived deformation theories considered in \cite{diNatale14, Fiorenza08, Fiorenza09}. We will not define derived deformation theory here, the reader unfamiliar with it may skip ahead to the concrete description of $d\cat P$ in Corollary \ref{cor-deriv} and its consequences.

We recall that in \cite{Fiorenza09} for a smooth projective manifold $X$ a period map is defined which goes from the derived deformation functor associated to the Kodaira-Spencer $L_{\oo}$-algebra $KS_{X}$ to derived deformations of an $L_{\oo}$-algebra $E$ associated to the de Rham complex of $X$. 
To be precise $E$ is given by the dg-Lie algebra $\uEnd(R\Ga(\Om_{X}^{*}))/\uEnd_{F}(R\Ga(\Om_{X}^{*}))[-1]$, 
We will denote this map by $\cat P_{FMM}: RDef_{KS_{X}} \to R Def_{E}$.

Then Definition 3.41 of \cite{diNatale14} presents a geometric version $\cat P_{inf}$ of this, which sends deformations of $X$ to the derived flag variety. On objects it sends a family $X \to R\Spec(B)$ over a dg-Artin algebra $B$ to the Hodge filtration on the derived de Rham complex.

The following diagram of derived deformation functors is established in \cite{diNatale14}:

\[
\bfig
\Square[RDef_{KS_{X}}`RDef_{E}`R Def_{X}`DFlag(R\Ga(X, \Om^{*}_{X}));\cat P_{FMM}`\simeq`\simeq`\cat P_{inf}]
\efig
\]

The left vertical map can be considered the inverse Kodaira-Spencer map.

Deformation functors, just like stacks, have tangent complexes, and the tangent of the map $\cat P_{FMM}$ has a very concrete description, see Corollary \ref{cor-deriv} below. We will now show this agrees (in a reasonable sense) with the differential of our derived period map $\cat P$.

Before we can make precise what we mean by agreement, let us briefly recall the tangent space in a general setting.
The tangent space of a functor $X$ at a point $x: \C \to X$ is the functor that associates to any shifted $\C$-module $M$ the space $T_{x}(M)= X(\C\oplus M)\times_{X(\C), x}*$, where $\C\oplus M$ is the square-zero extension of $\C$ by $M$. If the functor $F$ is homotopy-preserving and homotopy homogeneous then the tangent space is an abelian group and the homotopy groups satisfy  $\pi_{i}T_{x}(M)\cong \pi_{i+1}T_{x}(M[-1])$, see for example Section 1 of \cite{Pridham10a}. The tangent complex as in Section \ref{sect-tangent} thus has $i$-th cohomology given by $\pi_{0}T_{x}(\C[-i]))$.

The theory of deformations in derived analytic geometry is considered in detail in  \cite{Porta16}. Here we will only need that square zero extensions agree in the derived algebraic and the derived analytic setting, in particular that  $R\Spec(\C \oplus M) \simeq u((R\Spec(\C\oplus M))^{an})$. This follows from Proposition 8.2 of \cite{Porta15}, 
where an equivalence between derived Artin rings and (suitably defined) derived analytic Artin rings is established using the functor $(-)^{alg}$. 
As the underlying schemes are, points $(-)^{alg}$ and $u$ agree in this case.

Now let us prepare a comparison between $d\cat P$ and $d\cat P_{inf}$. 
We write $B$ for $R\Spec(\C \oplus M)$. Then to every family $X \to S$ and map $B \to S$ we can associate the deformation $\tau^{*}X \to B$ and $\cat P_{inf}$ then defines an element in $R Def_{E}(B)$, i.e.\ a map $B \to DFlag$. At the same time $\cat P$ defines an element in $D^{an}(B^{an})$ by postcomposition.

Since $\cat P$ is a derived analytic map we will compare it with the analytification of $\cat P_{inf}$.

We note that the map $\cat P_{inf}$ by definition goes to the deformation space of the derived flag variety, rather than the derived period domain, i.e. it ignores the bilinear form. Thus we replace $\cat P$ by the composition $\cat P': S \to U\to D^{an} \to DFlag^{an}$, obtained by composing the period map with the inclusion $U \to D^{an}$ and the natural forgetful map $D^{an} \to DFlag^{an}$. We note two things: As we work infinitesimally we may ignore monodromy and assume the target of the period map is $U$ rather than $U/\Gamma$. Secondly, we can consider the composition with the map $U \to DFlag^{an}$, and then we no longer need to refer to the polarization to construct the derived period map.

Thus the following result tells us that the differential of $\cat P$ is the analytification of the differential of the infinitesimal derived period map. 

\begin{propn}\label{propn-differential} 
Given $X \to S$ and $\tau: B \to S$ the maps $\cat P'\circ \tau^{an}$ and $\cat P_{inf}^{an}$ from $B^{an}$ to $DFlag^{an}$ agree and the correspondence is natural.
\end{propn}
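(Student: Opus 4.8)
The plan is to unwind both maps into concrete filtrations of modules and check they are the same filtration. First I would recall that, by Theorem \ref{thm-appadjoint}, the derived analytic map $\cat P: S^{an} \to D^{an}$ is obtained from the algebraic map $\cat P^{\#}: u(S^{an}) \to D$ constructed in the proof of Proposition \ref{propn-periodsschemes}, which associates to a test algebra the Hodge filtration $F^{*} \simeq R\Ga\circ Rf_{*}(\cat F^{*})$ on the pushforward of the relative de Rham complex, together with its trivialization against $V$. On the other hand $\cat P_{inf}$, by Definition 3.41 of \cite{diNatale14}, sends a deformation $\tau^{*}X \to B = \C\oplus M$ to the Hodge filtration on the derived de Rham complex of that deformation, viewed as a point of $DFlag(R\Ga(X,\Om^{*}_{X}))$; its analytification $\cat P_{inf}^{an}$ is then the corresponding map of derived analytic functors. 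So on a square-zero test object $B$ both maps produce a filtration on $V\otimes B$ (resp. on the de Rham complex of the deformation), and the point is that these agree.

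The key steps, in order, would be: (1) Use the fact recalled before the Proposition that $R\Spec(\C\oplus M) = u((R\Spec(\C\oplus M))^{an})$ — i.e. $u$ and $(-)^{alg}$ agree on Artin rings, by \cite{Porta17} — so that evaluating $\cat P'$ on $B^{an}$ is literally the same as evaluating $\cat P^{\#}$ (composed with the forgetful map to $DFlag$) on $R\Spec(B)$. This reduces the analytic comparison to a purely algebraic one. (2) Observe that for an infinitesimal base the trivialization data and the bilinear form drop out — as noted in the paragraph before the statement we compose with $U \to DFlag^{an}$, so we only need to match the underlying filtered complexes. (3) Identify $R\Ga\circ Rf_{*}(\cat F^{i})$ for the pulled-back family $\tau^{*}X \to R\Spec(B)$ with the stupid truncation $\cat F^{i}$ of the derived de Rham complex of $\tau^{*}X$ that appears in the definition of $\cat P_{inf}$ — this is essentially a base-change statement for the relative cotangent complex and its pushforward along $\tau$, using that the de Rham complex is built functorially from $\Om^{1}_{X/S}$ as in Lemma \ref{lemma-derham} and Lemma \ref{lemma-period}, together with the base-change result of \cite{Toen12} invoked throughout Section \ref{sect-localperiod}. (4) Conclude naturality: both constructions are functorial in $(X\to S, \tau)$ essentially by construction, since all the ingredients (pushforward, analytification, the adjunction of Theorem \ref{thm-appadjoint}, formation of the de Rham complex) are functorial.

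The main obstacle I expect is step (3): matching the two a priori different-looking recipes for the Hodge filtration. The infinitesimal period map of \cite{diNatale14, Fiorenza09} is phrased in the language of $L_\infty$-algebras and derived deformation functors, with the de Rham complex appearing as $R\Ga(\Om^{*})$ of the fiber together with its Gauss–Manin-type deformation, whereas our $\cat P$ is phrased via sheaf-theoretic pushforward $Rf_{*}$ on the (analytified) total space. Reconciling these requires carefully tracking that the filtered complex $\tau^{*}(R\Ga\circ Rf_{*}\cat F^{*})$ really is the one used to define $\cat P_{inf}$ — in particular that the "stupid" de Rham filtration and the pushforward commute appropriately after base change along $B\to S$, which is where one uses that $\Om^{1}_{X/S}$ is perfect (smoothness) so that pushforward of its exterior powers commutes with the required base changes. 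Once the two filtered complexes are identified as objects of $DFlag^{an}(B^{an})$, the equality of maps and its naturality are formal.

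Finally, having established Proposition \ref{propn-differential}, one extracts the differential: passing to the tangent space at a point, $d\cat P$ is the analytification of $d\cat P_{inf} = d\cat P_{FMM}$, which by the diagram recalled above and Corollary \ref{cor-deriv} is the explicit map factoring through Kodaira–Spencer, recovering Griffiths transversality.
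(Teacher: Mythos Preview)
Your approach is essentially the paper's, but your step (1) glosses over the key mechanism. The comparison map $\Phi: \Map(uT,Y)\to\Map(T,Y^{an})$ of Theorem \ref{thm-appadjoint} is only a map, not an equivalence, so you cannot simply say ``evaluating $\cat P'$ on $B^{an}$ is literally the same as evaluating $\cat P^{\#}$ on $R\Spec(B)$'' and thereby reduce to algebra. What the paper does instead is exhibit both analytic maps as lying in the image of $\Phi$: one writes the algebraic identity $\cat P^{\#}\circ u\tau^{an} = \cat P_{inf}\circ h_{B}$ (where $h_{B}: u(B^{an})\to B$ is the natural map, an equivalence since $B$ is Artin), and then applies $\Phi$ to both sides. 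Naturality of $\Phi$ in the source gives $\cat P'\circ\tau^{an}$ on the left; naturality in the target, together with $\Phi(h_{B})=\id_{B^{an}}$, gives $\cat P_{inf}^{an}$ on the right.

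Your step (3) is then precisely the verification of that algebraic identity, and here you overestimate the difficulty: since $\cat P_{inf}$ is already defined geometrically in \cite{diNatale14} as the Hodge filtration on the de Rham complex of the family, both sides are visibly $h_{B}^{*}(Rf_{*}\Om_{X/S}^{\geq p})$. The one nontrivial ingredient is that $\cat P^{\#}$ is built from the \emph{analytic} pushforward while $\cat P_{inf}$ uses the algebraic one, so the identification requires Porta's GAGA (Lemma \ref{lem-gaga}) to commute $f_{*}$ with pullback along $h_{B}$ --- not the algebraic base change of \cite{Toen12} you cite.
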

\begin{proof}
We may view both $\cat P_{inf}^{an}$ and $\cat P'$ as constructed out of maps to $QFilt^{an}$ and $*$ with homotopies $\eta_{inf}$ and $\eta$ connecting the associated maps to $QPerf^{an}$.
The only difference is in the definition of the homotopies, which arise from maps $w_{inf}, w: \cat F^{0}\to \cat V \otimes \cat O_{B^{an}}$. The map $w_{inf}$ is obtained by analytifying an algebraic map while $w$ arises from the simplicial adjunction in Lemma \ref{lemma-simplicial-adjoint}. 
However, as the natural map $u(B^{an}) \to B$ is an equivalence the two constructions agree.
\end{proof}

\begin{propn}
The tangent space of $D^{an}$ at any point agrees with the tangent space of $D$.
\end{propn}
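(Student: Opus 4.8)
The plan is to identify the cotangent complexes, $\set L_{D^{an}} \simeq (\set L_{D})^{an}$, and then restrict along $\C$-points. First note that since analytification commutes with the truncations, $\pi_{0}\pi^{0}(D^{an}) = (\pi_{0}\pi^{0}(D))^{an}$ has the same $\C$-points as $\pi_{0}\pi^{0}(D)$, so a point of $D^{an}$ is equally a point $x = (F^{\bullet}, w_{F}, Q_{F})$ of $D$; and the tangent complex at such a point is the $\C$-linear dual of the fibre of the cotangent complex there, so it suffices to compare the cotangent complexes.

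As $D$ is geometric and locally of finite presentation (Corollary~\ref{cor-lfp}), choose a smooth atlas $p\colon U\to D$ with $U$ a derived affine scheme; then $U$ is locally of finite presentation, and by the compatibility of analytification with geometric structure and smooth covers recalled in Section~\ref{sect-dan}, $p^{an}\colon U^{an}\to D^{an}$ is a smooth atlas of $D^{an}$. Analytification of coherent sheaves is pullback along the natural maps $h_{X}\colon (X^{an})^{alg}\to X$ (see \cite{Porta15}) and hence commutes with $p^{*}$; so one may analytify the fibre sequence $p^{*}\set L_{D}\to \set L_{U}\to \set L_{U/D}$ and compare it with $(p^{an})^{*}\set L_{D^{an}}\to \set L_{U^{an}}\to \set L_{U^{an}/D^{an}}$. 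This reduces the claim to two local comparisons. For the absolute term, $\set L_{U^{an}}\simeq(\set L_{U})^{an}$, which is Porta's GAGA for cotangent complexes of Deligne-Mumford stacks locally of finite presentation, Theorem~5.20 of \cite{Porta16}. For the relative term, $\set L_{U^{an}/D^{an}}\simeq(\set L_{U/D})^{an}$: since $p$ and $p^{an}$ are smooth, both sides are concentrated in degree zero, where they are the vector bundles of relative K\"ahler, respectively holomorphic, differentials, and analytification identifies these (cf.\ Lemma~\ref{lemma-derham}). Smooth descent for the cotangent complex then yields $\set L_{D^{an}}\simeq(\set L_{D})^{an}$. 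Restricting along the $\C$-point $x$, at which analytification of a coherent sheaf restricts to the restriction of that sheaf because $h$ is an isomorphism on the residue field $\C$, gives $T_{x}D^{an}\simeq T_{x}D$, which is $\uEnd_{Q_{F}}(F^{0})/\uEnd_{F,Q_{F}}(F^{0})[1]$ by Proposition~\ref{propn-tangent}.

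A shorter but more black-boxed route avoids the cotangent complex entirely: for a shifted $\C$-module $M$ one has, by the discussion preceding Proposition~\ref{propn-differential} and \cite{Porta17}, that $u((R\Spec(\C\oplus M))^{an}) \simeq R\Spec(\C\oplus M)$; combined with \cite{HolsteinF}, where analytification of geometric stacks locally of finite presentation is shown to commute with mapping spaces, this gives $\Map((R\Spec(\C\oplus M))^{an}, D^{an}) \simeq \Map(R\Spec(\C\oplus M), D)$, which upon taking fibres over the basepoint is precisely $T_{x}D^{an}(M) \simeq T_{x}D(M)$. Theorem~\ref{thm-appadjoint} already supplies the comparison map in this identity; the content is that it is an equivalence.

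The main obstacle is that the essential input of each route is external and, at the time of writing, not yet fully published: Porta's comparison of algebraic and analytic cotangent complexes in \cite{Porta16} for the first route, and the forthcoming \cite{Porta17} and \cite{HolsteinF} for the second. A secondary, bookkeeping point in the first route is to verify carefully that $p^{an}$ is a smooth atlas of the \emph{analytic Artin} stack $D^{an}$, so that smooth descent for the cotangent complex applies; this is implicit in the properties of analytification recalled in Section~\ref{sect-dan} but deserves to be stated explicitly.
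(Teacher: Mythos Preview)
Your second route is precisely the paper's own argument: the paper's proof consists of one sentence citing the identification of derived analytic with derived algebraic Artin rings (\cite{Porta17}) together with the adjointness of analytification to be established in \cite{HolsteinF}, which is exactly the content of your black-boxed approach.

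Your first route, via comparing cotangent complexes through a smooth atlas and invoking Porta's GAGA result for the cotangent complex of DM stacks, is a genuinely different and more explicit strategy. It has the virtue of unpacking what the black box would actually be doing, and it avoids appealing to the full strength of the adjointness theorem of \cite{HolsteinF} in favour of the more localised Theorem~5.20 of \cite{Porta16} (which applies since your atlas $U$ is affine, hence DM). The cost is the bookkeeping you already flag: one must know that $p^{an}$ is a smooth atlas of the analytic Artin stack $D^{an}$ and that the analytic cotangent complex satisfies smooth descent there, facts which are implicit in the machinery of \cite{Porta15,Porta16} but not stated in the paper. The paper's route trades these verifications for a single forward reference; your first route would make the dependence more transparent at the expense of a longer argument.
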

\begin{proof}
Given the definition of the tangent space and the definition of analytification as a left Kan extension
this follows immediately from the identification of derived analytic and derived algebraic Artin rings. 
\end{proof}
In fact, by the same argument analytification does not change any tangent spaces or induced maps between them. We can thus use the computation from \cite{diNatale14} to determine the differential of the period map.

\begin{cor}\label{cor-deriv}
At a point $s \in S$ the differential of the derived period map $d\cat P: T_{s}S \to T_{F,w,Q_{F}}D_{V,Q}$ is obtained by composing 
$\tau^{*}: T_{s}S \to T\R Def_{X_{s}}$
 with the Kodaira-Spencer map 
 \[
 T\R Def_{X_{s}} \to T\R Def_{KS_{X_{s}}} \simeq R\Ga(X, \cat T_{X})[1]\]
  and the map 
  \[
  R\Ga(X, \cat T_{X})[1] \to (\uEnd_{Q_{F}}(V)/\uEnd_{F,Q_{F}}(V))[1]
  \]
   induced by the action of $\cat T_{X}$ on $\Om_{X}$ and the product on derived global sections.
\end{cor}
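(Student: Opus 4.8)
The plan is to reduce the computation of $d\cat P$ to the differential of the infinitesimal derived period map and then invoke the formula for the latter established in \cite{diNatale14, Fiorenza09}. First note that by construction $\cat P$ factors as $S^{an}\to U\hookrightarrow D^{an}\to D_{\Gamma}^{an}$, and since the passage to the $\Gamma$-quotient is invisible infinitesimally and $U$ is open in $D^{an}$, the differential $d\cat P$ at $s$ is a map $T_{s}S\to T_{(F,w,Q_{F})}U=T_{(F,w,Q_{F})}D^{an}$; by the preceding proposition analytification changes neither tangent spaces nor the maps between them, so this target is $T_{(F,w,Q_{F})}D$, which by Proposition \ref{propn-tangent} is $\uEnd_{Q_{F}}(F^{0})/\uEnd_{F,Q_{F}}(F^{0})[1]$. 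Thus $d\cat P$ is already known to take values in the right place, and it remains only to identify the map.

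Next I would postcompose with the forgetful map $D\to DFlag$, in order to compare with $\cat P'$ and apply Proposition \ref{propn-differential}. For a square-zero extension $B=\C\oplus M$ and a map $\tau: B\to S$ through $s$, that proposition says $\cat P'\circ\tau^{an}$ and $\cat P_{inf}^{an}$ agree as maps $B^{an}\to DFlag^{an}$; reading this on tangent complexes, and using again that analytification is an equivalence on tangent complexes, gives $d\cat P'|_{s}=d\cat P_{inf}\circ\tau^{*}$, where $\tau^{*}: T_{s}S\to T\R Def_{X_{s}}$ classifies the family pulled back from $S$. The commutative square of derived deformation functors recalled above identifies $\cat P_{inf}$ with $\cat P_{FMM}$ along the equivalences $\R Def_{X_{s}}\simeq\R Def_{KS_{X_{s}}}$ and $DFlag(R\Ga(X,\Om^{*}_{X}))\simeq\R Def_{E}$, the first of which on tangent spaces is the inverse Kodaira--Spencer isomorphism $T\R Def_{X_{s}}\simeq T\R Def_{KS_{X_{s}}}\simeq R\Ga(X,\cat T_{X})[1]$. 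By \cite{diNatale14} (see also \cite{Fiorenza09}) the differential of $\cat P_{FMM}$ is the map from $R\Ga(X,\cat T_{X})[1]$ to $\uEnd(R\Ga(\Om^{*}_{X}))/\uEnd_{F}(R\Ga(\Om^{*}_{X}))[1]$ induced by the contraction action of $\cat T_{X}$ on $\Om^{*}_{X}$ together with the product on derived global sections.

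It then remains to transport this formula along the trivialisation $w$. By Lemma \ref{lemma-trivialize} there is a weak equivalence $w: V\to R\Ga(X_{s},\Om^{*}_{X_{s}})$, and by Lemma \ref{lemma-period} it is compatible with the bilinear forms, carrying $Q$ to $Q_{F}$; hence it identifies $\uEnd(R\Ga(\Om^{*}_{X}))/\uEnd_{F}(R\Ga(\Om^{*}_{X}))[1]$ with $\uEnd(V)/\uEnd_{F}(V)[1]$ and, on the $Q_{F}$-antisymmetric subcomplex, with $\uEnd_{Q_{F}}(V)/\uEnd_{F,Q_{F}}(V)[1]$. That $d\cat P_{FMM}$ already lands in this subcomplex is the classical fact that contraction by a vector field is antisymmetric for the cup-product pairing, which is also forced by the factorisation of $\cat P$ through $U\subset D^{an}$ from the first step; and since $T_{(F,w,Q_{F})}D$ is a direct summand of $T_{(F,w,Q_{F})}DFlag$ (the $Q_{F}$-antisymmetric part), knowing $d\cat P$ after postcomposition with $D\to DFlag$ determines it. Assembling $\tau^{*}$, the Kodaira--Spencer isomorphism, and the contraction-and-product map transported by $w$ then yields exactly the description in the statement.

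The bulk of the remaining work is organisational rather than conceptual: one must align the various models for the three tangent complexes $T_{s}S$, $T\R Def_{X_{s}}$ and $T_{(F,w,Q_{F})}D_{V,Q}$, and for the comparison maps between them across Proposition \ref{propn-differential}, and in particular check that the trivialisation $w$ appearing in the construction of $\cat P$ agrees, up to coherent homotopy, with the one implicit in the deformation-theoretic complex $E$, so that the bilinear form is correctly tracked when one refines from the flag variety to $D_{V,Q}$. All the genuinely analytic and Hodge-theoretic input has already been isolated, in Proposition \ref{propn-differential} and in \cite{diNatale14}.
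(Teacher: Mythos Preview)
Your proof is correct and follows essentially the same approach as the paper's: reduce to Proposition \ref{propn-differential}, invoke the computation of $d\cat P_{inf}$ from \cite{diNatale14}, and then observe that the contraction map already lands in the $Q_F$-antisymmetric part. The paper's proof is considerably more terse---it simply notes that the tangent functor is determined by its values on shifts of $\C$, appeals to \cite{diNatale14}, and says the map ``as constructed clearly factors through'' $\uEnd_{Q_F}(V)/\uEnd_{F,Q_F}(V)[1]$---whereas you have spelled out the reduction through $U/\Gamma \to U \to D^{an}$, the role of $w$, and the direct-summand argument for why knowing $d\cat P'$ into $T\,DFlag$ determines $d\cat P$ into $T\,D$; this extra care is welcome but not a different strategy.
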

\begin{proof}
This follows from the theorem as all higher cohomology is determined by evaluating the tangent functor at shifts of $\C$. Other than analytification the only difference to \cite{diNatale14} is that the second map in \cite{diNatale14} goes to $(\uEnd(V)/\uEnd_{F}(V))[1]$. But this map as constructed clearly factors through  $(\uEnd_{Q_{F}}(V)/\uEnd_{F,Q_{F}}(V))[1]$. 
\end{proof}
In other words, the differential of the derived period map can be computed, analogously to the underived version, by cup product and contraction.

This characterization of the differential immediately gives the following corollary:
\begin{cor}
The derived period map satisfies Griffiths transversality.
\end{cor}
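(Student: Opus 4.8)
The plan is to read this off directly from the cup-and-contract description of $d\cat P$ in Corollary \ref{cor-deriv}. First I would make precise what Griffiths transversality should mean in this setting. By Proposition \ref{propn-tangent} the tangent complex of $D_n(V,Q)$ at $(F^{\bullet}, w_F, Q_F)$ is the shifted cone $\uEnd_{Q_F}(F^0)/\uEnd_{F,Q_F}(F^0)[1]$, where $\uEnd_{F,Q_F}(F^0)\subseteq\uEnd_{Q_F}(F^0)$ are the endomorphisms preserving the filtration. Filtering $\uEnd(F^0)$ by filtration degree — so that $F^{-1}\uEnd(F^0)$ consists of those $\phi$ with $\phi(F^i)\subseteq F^{i-1}$ for all $i$ — one obtains a subcomplex $F^{-1}\uEnd_{Q_F}(F^0)/\uEnd_{F,Q_F}(F^0)[1]$, the \emph{horizontal tangent complex}, which enhances the horizontal tangent space $F^{-1}(\End)/F^{0}(\End)$ of Section \ref{sect-classicaldomain}. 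Griffiths transversality is then the assertion that $d\cat P$ factors through this subcomplex.

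By Corollary \ref{cor-deriv}, after the steps $\tau^{*}$ and Kodaira--Spencer — which land in $R\Ga(X,\cat T_X)[1]$ and so carry no filtration — the differential is the map $R\Ga(X,\cat T_X)[1]\to \uEnd_{Q_F}(F^0)/\uEnd_{F,Q_F}(F^0)[1]$ (here $F^0$ is identified with $V$ via $w_F$) induced by the action of $\cat T_X$ on $\Om_X^{\bullet}$ and the product on derived global sections. The only thing to check is that this action lowers the Hodge filtration by exactly one step. But contraction $\iota_{\xi}$ with a vector field $\xi$ sends $\Om^{p}_{X/S}$ into $\Om^{p-1}_{X/S}$, hence sends the stupid truncation $\cat F^{p}=\Om^{\geq p}_{X/S}$ into $\cat F^{p-1}$; applying $Rf_{*}$ and $R\Ga$, the resulting endomorphism of $F^0\simeq R\Ga\circ Rf_{*}\Om_{X/S}$ carries $F^i$ into $F^{i-1}$ for every $i$. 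Thus the map $R\Ga(X,\cat T_X)[1]\to\uEnd_{Q_F}(F^0)[1]$ factors through $F^{-1}\uEnd_{Q_F}(F^0)[1]$ before we project out $\uEnd_{F,Q_F}(F^0)[1]$, so $d\cat P$ factors through the horizontal tangent complex, which is Griffiths transversality.

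The only real obstacle is bookkeeping: one must verify that the filtration-degree shift survives the fibrant--cofibrant replacements used to build $F^{\bullet}$ in Lemma \ref{lemma-period} and the comparison $F^0\simeq\cat R$, so that ``lies in $F^{-1}\uEnd$'' is a homotopy-meaningful statement and not an artifact of one model. Since $F^{-1}\uEnd(F^0)$ is itself the homotopy fibre of $\uEnd(F^0)\to\uEnd(F^0)/F^{-1}\uEnd(F^0)$, compatibly with the cone presentation of the tangent complex in Section \ref{sect-tangent}, this is routine; it is also unaffected by analytification, since analytification changes neither tangent complexes nor the induced maps between them. Finally, the same argument applies to the composite $\cat P'\colon S^{an}\to DFlag^{an}$, so the transversality statement is compatible with passage to $D^{an}\to DFlag^{an}$ and, under $\pi_{0}\pi^{0}$, recovers the classical Griffiths transversality.
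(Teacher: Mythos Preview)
Your proposal is correct and follows exactly the approach the paper intends: the paper states this corollary as an immediate consequence of the cup-and-contract description of $d\cat P$ in Corollary \ref{cor-deriv}, and the explanatory sentence following the corollary (``the tangent of the derived period map lies in a subcomplex \ldots\ namely endomorphisms shifting the filtration by one only'') is precisely the observation you spell out. You have simply made explicit the definition of the horizontal tangent complex and the reason contraction lowers filtration degree by one, which the paper leaves implicit.
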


This says that the tangent of the derived period map lies in a subcomplex of the tangent complex of the period domain, namely endomorphisms shifting the filtration by one only. Hence we could call the image of the period map a horizontal derived substack. 

\begin{rk}
At this stage we would like to say that the infinitesimal derived period map is directly obtained from the global derived period map.
Indeed, morally the map $\cat P_{inf}$ is the restriction of $\cat P$ to dg-Artin algebras.

However, to make this argument precise we would need to use a moduli stack of all varieties (as opposed to polarized varieties or subschemes of a fixed variety). But the deformation functor $R Def_{X_{0}}$ does not extend to an algebraic stack. This can be seen by considering $K3$-surfaces: The deformation functor is unobstructed with a 20-dimensional tangent space, but algebraic families of $K3$-surfaces can only be 19-dimensional. 
\end{rk}

\begin{rk}\label{rk-extended}
This is a good moment to note that our construction is still in some sense classical, and there should be a generalized (one could also say extended or non-commutative) version of the period map that starts from $R\Ga(X, \wedge^{*}\cat T_{X})$, which classifies $A_{\oo}$-deformations of $D^{b}_{coh}(X)$. This appears in work by Kontsevich and Barannikov \cite{Kontsevich03, Barannikov98}. 
To take this further, one would need to understand non-commutative Hodge structures, see \cite{Katzarkov08a}.

The reader may also consider \cite{Fiorenza08} for an explicit period map for generalized deformations of a K\"ahler manifold and \cite{Iwanari16} for a period map for deformations of a non-commutative algebra. 
\end{rk}

\subsection{Examples}\label{sect-examples}

We will finish by briefly talking about some examples for the derived period map. 

Recall that we already described the universal derived period map defined on substacks of the moduli space $\mathfrak M$ of polarized schemes in Example \ref{eg-moduli}. 

\begin{eg}
Another class of examples is given by letting $S$ be a subscheme of the derived Hilbert scheme $DHilb_{Y}$ for some projective variety $Y$. This was constructed originally in \cite{Ciocan02} and appears in a more modern context in \cite{Lurie04a}, see also Section 3.3 of \cite{Pridham10}. $DHilb_{Y}$  is a geometric stack locally almost of finite presentation that
parametrizes derived families of subschemes of $Y$, to be precise it represents the functor sending  $S$ to derived $Y\times S$-schemes which are proper and flat over $S$ and for which the map to $Y\times S$ is a closed immersion locally almost of finite presentation. 
\end{eg}

In both of these examples it is useful to observe the following: Fix a smooth scheme $X_{0}$ or a smooth subscheme $X_{0}$ of some projective $Y$. 
Then there is an open substack $S$ of $\mathfrak M$ respectively $DHilb_{Y}$ containing $X_{0}$ such that the universal family $f: X \to S$ is smooth and projective. (The underived truncation of $f$ being smooth is an open condition 
and the condition that $f$ is strong is again open as it says a certain map of graded modules on $\pi^{0}(S)$ is an isomorphism.) To apply the main theorem we just need to make sure $S$ is quasi-compact, restricting to some substack if necessary. (Note that this is equivalent to the underlying underived space of $S$ being quasi-compact, see Lemma \ref{lemma-open}.)

The universal families on $\mathfrak M$ and $DHilb_{Y}$ give smooth projective families of derived schemes $X \to S$ by restriction and thus derived period maps $\cat P: S^{an} \to U/\Ga$.

Let us now look infinitesimally at two concrete examples. We note that the tangent complex for the moduli stack $\mathfrak M$ of polarized schemes at $X$ is an extension of $\cat T_{X}[1]$ by $\cat O_{X}[1]$, see Example 3.39 in \cite{Pridham10}. The tangent complex of the Hilbert scheme $DHilb_{Y}$ at $X \subset Y$ is $\set L^{*}_{X/Y}$, if $X$ is smooth this is just the normal bundle.

Thus the cases where $H^{i}(\cat T_{X_{s}})$ is nonzero for $i > 1$ will often correspond to interesting derived information in $S$.

\begin{eg}
If $X_{0}$ is any Calabi-Yau variety then the infinitesimal derived period map induces an injection on the cohomology groups of tangent complexes, this follows from 
Theorem B of \cite{Iacono09}. 

We can use this to write down examples where the derived period map is a non-trivial enhancement of the usual period map.
Let us for example consider an abelian surface $X$ in a component $\mathfrak M$ of the moduli stack of polarized varieties. Then $X$ has nontrivial $H^{2}(\cat T_{X})$, and there is a surjection from $\pi_{1}(T_{X}{\mathfrak M})$ to $H^{2}(\cat T_{X})$. Thus the derived period map induces a non-trivial map in degree 1 of the tangent complex at $X$.
\end{eg}

We note that Theorem A of \cite{Iacono09} says that whenever the period map induces an injection on the cohomology of tangent spaces then the deformation theory of $X_{0}$ is unobstructed, as the deformation theory of a filtered complex is governed by a quasi-abelian $L_{\oo}$-algebra.
(In fact generalized deformations of Calabi-Yau varieties are still unobstructed, see \cite{Katzarkov08a}.)

In particular the derived period map only sees unobstructed deformations of the special fibre, and the kernel of the differential of the derived period map is a natural global reduced obstruction theory.

One question arising immediately from the existence of the period map is when it is injective in a suitable sense, i.e.\ when a moduli problem can be completely embedded in the period domain (which has a nice description in terms of linear and quadratic data).

One possible way of making this precise would be to ask when $\pi_{0}\pi^{0}(\cat P)$ is an immersion and $d\cat P$ is injective on homotopy groups.
 
\begin{rk}
We note that this is not the definition of a closed immersion in the sense of \cite{Toen05a} (which does not constrain the homotopy groups of the domain), nor is it a strong map (which constrains the homotopy groups of the domain too much). This is an example of the lack of a good derived analogue of closed immersion. 
\end{rk}

This is a kind of \emph{derived Torelli} problem. (The reader should note that this term is already used for the problem of determining when the period map determines the derived category of a variety.)

\begin{eg}
Now consider the case where $X_{0}$ is a hypersurface of degree $d$ in $\set P^{3}$. We may consider it in the moduli stack $\mathfrak M$ of polarized surfaces. Then derived Torelli is false if $d$ is large enough, i.e.\ the map on homotopy groups of tangent complexes is not an injection. 
It follows from our tangent space considerations that derived deformations of $X_{0}$ in degree 1 surject to $H^{2}(\cat T_{X_{0}})$.
One can compute that  $H^{2}(\cat T_{X_{0}})$ is nonzero (using Kodaira vanishing, Hirzebruch-Riemann-Roch and the normal exact sequence). But as $H^{*}(X_{0},\Om^{*})$ is concentrated in even degrees the infinitesimal period map must send $H^{2}(\cat T_{X_{0}})$ to zero.

This is particularly interesting since projective hypersurfaces of degree bigger than three satisfy the local Torelli theorem, see \cite{Carlson03}, thus $\pi_{0}\pi^{0}(\cat P)$ is locally an immersion.
\end{eg}

\appendix
\section{Presenting higher analytic stacks}\label{appendix}
\subsection{Introduction} 

In this appendix we show that Pridham's framework of presenting higher stacks as hypergroupoids developed in \cite{Pridham09a} can be used to provide models for derived analytic Artin stacks. For an overview of the general theory see \cite{Pridham11}. While some of this discussion is not strictly necessary and only Lemma \ref{lemma-simplicial-adjoint} is needed in the body of the paper we feel that this discussion provides an interesting and useful viewpoint. 

\subsection{Hypergroupoids}
In \cite{Pridham09a} Pridham develops the theory of hyper{-}groupoids as presentations of higher stacks.

To talk about hypergroupoids we need to fix a model category $\cat S$ with a nice subcategory $\cat A$ and classes $\mathbf C$ and $\epsilon$ of morphisms in $Ho(\cat A)$ and $Ho(\cat S)$ respectively, satisfying some conditions we will detail below.

\begin{eg}
The motivating example is when $\cat S$ is the model category of stacks on derived affine schemes over some ground ring $k$ with the \'etale topology.
$\cat A$ consists of the essential image of the Yoneda embedding, $\epsilon$ is the class of local surjections in $Ho(\cat S)$ and $\mathbf C$ the class of smooth maps in $Ho(\cat A)$ which are moreover in $\epsilon$. We write this quadruple as $(\cat A_{alg}, \cat S_{alg}, \mathbf C_{alg}, \epsilon_{alg})$
\end{eg}

We write $s\cat A$ for $\cat A^{\Delta^{op}}$.

\begin{defn}
A map $X \to Y$ in $s\cat A$ is a \emph{relative $(n,\mathbf C)$-hypergroupoid} over $Y$ if it is a Reedy fibration and the homotopy partial matching maps
\[
X_{m} \to M^{h}_{\Lambda^{m}_{k}}(X)\times^{h}_{M^{h}_{\Lambda^{m}_{k}}(Y)}Y_{m}
\]
are in $\mathbf C$ for all $m,k$, and weak equivalences if $m > n$. A relative hypergroupoid over the final object is simply called a \emph{$(n, \mathbf C)$-hypergroupoid}.
\end{defn}
\begin{defn}
A map $X \to Y$ in $s\cat A$ is a \emph{trivial relative $(n,\mathbf C)$-hypergroupoid} over $Y$ if it is a Reedy fibration and the homotopy matching maps
\[
X_{m} \to M^{h}_{\partial \Delta^{m}}(X)\times^{h}_{M^{h}_{\partial\Delta^{m}}(Y)}Y_{m}
\]
are in $\mathbf C$ for all $m$, and weak equivalences if $m \geq n$. 

We will fix  $n$ in this section and simply speak of hypergroupoids when $\mathbf C$ is clear from the context. 

\end{defn}
Here the matching objects functor $M^{h}_{-}(X): \sSet^{op} \to \cat A$ is defined by forming the right Kan extension of the functor $X: \Delta^{\op} \to \cat A$, see Section 1.1.1 of \cite{Pridham09a}. It is the derived version of $\Hom_{\sSet}(-,X)$.

Now the geometric realization functor $|-|$ from $s\cat A$ to $\cat S$ sends hypergroupoids to geometric stacks, see Proposition 4.1 of \cite{Pridham09a}. (This is just another name for the homotopy colimit over $\Delta^{op}$.) For simplicity we will always compose $|-|$ with stackification (i.e.\ fibrant replacement in $\cat S$) without mentioning it in the notation. 

Theorem 3.3 in \cite{Pridham11} says that in the example above the relative category of $(n,\mathbf C_{alg})$-hypergroupoids with weak equivalences given by trivial hypergroupoids gives a model for the $\oo$-category of strongly quasi-compact $n$-geometric derived Artin stacks.

\begin{rk}
Recall that a \emph{relative category} \cite{Barwick12} is just a category with a collection of weak equivalences satisfying some very basic conditions. Nevertheless, the model category of relative categories is Quillen equivalent to other models of $\oo$-categories (i.e.\ simplicial categories, quasi-categories, Complete Segal Spaces).
\end{rk}

We will denote the level-wise hom-space for $s\cat A$ by $\uHom$, but it is important to note that we cannot compute hom-spaces level-wise. Instead we define
\[
\uHom^{\#}_{s\cat A}(X,Y) = \uHom_{pro(s\cat A)}(\tilde X,Y)
\]
where $\tilde X \to X$ is a $\cat T$-projective relative $\cat T$-cocell where $\cat T$ is the class of trivial relative hypergroupoids. A relative $\cat T$-cocell is just a transfinite composition of pullbacks of maps in $\cat T$. For detailed definitions see Section 3.2 of \cite{Pridham09a}. 
To understand this definition it helps to note that $\pi_{0}(\uHom^{\#}(X,Y) = \lim_{X' \in \cat T(X)}\pi_{0}\uHom(X', Y)$, where the limit is over trivial relative hypergroupoids over $X$.

Then if $Y$ is a hypergroupoid we have that $\uHom^{\#}_{s\cat A}(X,Y) \simeq \Map_{\cat S}(|X|, |Y|)$, see Theorem 4.10 in \cite{Pridham09a}.

We want to apply this framework to study analytic stacks. Thus we need a new choice of $(\cat A, \cat S, \mathbf C, \epsilon)$.

Recall that $dStein$ is the full subcategory of $dAn$ given by those derived analytic spaces whose underlying underived analytic space is Stein. This is constructed as a quasi-category in \cite{Porta15}.

We need $\cat A$ to be a pseudo-model category. To be precise we need a model category $\cat S$,  a class of morphisms $W_{\cat A}$ in $\cat A$ and a fully faithful $\iota: \cat A \to \cat S$ such that
\begin{itemize}  
\item $\iota(W_{\cat A})= W_{\cat S} \cap \iota(\cat A)$. Here $W_{\cat S}$ are the weak equivalences in $\cat S$.
\item $\cat A$ closed under weak equivalences in $\cat S$, i.e.\ $x \simeq \iota(y)$ implies $x$ is in the image of $\cat A$.
\item $\cat A$ is closed under homotopy pullbacks in $\cat S$
\end{itemize}

Consider the quasi-category $dStein$ and recall that simplicial categories and quasi-categories are equivalent models of $\oo$-categories. We apply $\mathfrak C$, the left adjoint of the coherent nerve $N$, to obtain a simplicial category $\mathfrak C[dStein]$. 
We consider $\sSet^{\mathfrak C[dStein])}$, the simplicial category of functors from $\mathfrak C[dStein]$ to the simplicial model category of simplicial sets, $\sSet$. 
We equip this category with the injective model structure. 
There is a Grothendieck topology $\tau$ on $dStein$ induced by \'etale morphisms of $\cat T_{an}$-structured topoi (see Definition 2.3.1 of \cite{Lurie11e}). 
We localize $\sSet^{\mathfrak C[dStein]}$ at homotopy $\tau$-hypercovers to obtain the model category of stacks, which we call $\cat S_{an}$. 
As $\tau$ is (hyper-)subcanonical, see Corollary 3.6 in \cite{Porta15}, we have a fully faithful embedding $y: \mathfrak C[dStein] \to \cat S_{an}$.

We let $\cat A_{an}$ be the closure of $y(\mathfrak C[dStein])$ under weak equivalences and define $W_{\cat A}$ by the first condition above. 

Thus the first two conditions are satisfied. For the third condition we need to know that $dStein$ is closed under homotopy pullbacks. By Section 12 of \cite{Lurie11d} $dAn$ has (homotopy) pullbacks. 
Now note that $\pi^{0}$ commutes with limits and the fibre products of Stein spaces are Stein spaces, see Example 51 (b) in \cite{Remmert94}.

\begin{rk}
For the reader's peace of mind we note that working with simplicial presheaves as simplicial functors is equivalent to working with presheaves defined in the quasi-categorical setting as in \cite{Porta14}. 
This follows from Proposition 4.2.4.4 of \cite{Lurie11a}. Setting  $\mathcal U = A = \sSet$ we have $N((\sSet^{\mathfrak C[dStein]})^{\circ}) \simeq Fun(dStein, N(\sSet^{\circ}))$ where $N$ is the coherent nerve. Here $(-)^{\circ}$ restricts to fibrant cofibrant objects in a model category. Of course $N(\sSet^{\circ})$ is the infinity category of simplicial sets. In other words the simplicial category $(\sSet^{\mathfrak C[dStein]})^{\circ}$ models the quasi-category of functors from $dStein$ to simplicial sets. This is true for the injective or projective model structure.

The reason we are working not with quasi-categories but simplicially with $\sSet^{\mathfrak C[dStein]}$ is that we rely on explicit arguments involving simplicial diagrams in Theorem \ref{thm-context} below.
\end{rk}

Having made these observations we will abuse notation and use $dStein$ for $\mathfrak C[dStein]$ from now on.

Next, given the pseudo-model category $\cat A_{an} \subset \cat S_{an}$ we need a class of $\epsilon$-morphisms in $\cat S_{an}$, functioning as covers, closed under composition and homotopy pullback. They need to satisfy Properties 1.7 of \cite{Pridham09a}.

We just imitate the algebraic definition and let $\epsilon_{an}$ be the class of local surjections, i.e.\ maps such that the associated map of simplicial sheaves is surjective on $\pi_{0}$.

Next we need a class $\mathbf C_{an}$ of morphisms in $Ho(\cat A_{an})$ containing isomorphisms and closed under composition and homotopy pullback, and satisfying Properties 1.8 of \cite{Pridham09a}. We define $\mathbf C_{an}$ to consist of smooth maps which are also in $\epsilon_{an}$.

\begin{thm}\label{thm-context}
$(\cat A, \cat S_{an}, \mathbf C_{an}, \epsilon_{an})$ as above satisfies Properties 1.7 and 1.8 of \cite{Pridham09a}.
\end{thm}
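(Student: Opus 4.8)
The plan is to verify, item by item, the list of axioms in Properties 1.7 and 1.8 of \cite{Pridham09a}, transporting each algebraic verification to the analytic setting. The guiding principle throughout is that the analytic picture has been set up so as to mirror the algebraic one: $\epsilon_{an}$ (local surjections), $\mathbf C_{an}$ (smooth local surjections) and the topology $\tau$ on $dStein$ are defined by the same recipes as $\epsilon_{alg}$, $\mathbf C_{alg}$ and the étale topology on derived affines, and $dStein$ has been shown above to be a pseudo-model category inside $\cat S_{an}$ closed under homotopy pullbacks. So the overall strategy is: (1) recall explicitly what Properties 1.7 and 1.8 demand; (2) for each condition, isolate the one geometric input needed about derived Stein spaces and smooth or étale maps between them; (3) cite the relevant result from \cite{Porta15, Porta14, Lurie11e} or, where the statement is about the underived truncation, reduce to the classical theory of Stein spaces via the fact that $\pi^0$ commutes with the relevant limits.

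The concrete steps. \textbf{Properties 1.7 (the class $\epsilon_{an}$ of covers).} One must check: $\epsilon_{an}$ contains all equivalences and is closed under composition and homotopy pullback; that it is local on the target; and that it is generated by a set of morphisms between objects of $\cat A_{an}$, i.e.\ that every local surjection admits a refinement by a $\tau$-cover of Stein spaces. Closure under composition and homotopy pullback is formal from the definition via surjectivity on $\pi_0$ of the associated simplicial sheaves, since homotopy pullback in $\cat S_{an}$ computes $\pi_0$ correctly; locality is immediate. The generation statement uses that $\tau$ is a subcanonical topology (Corollary 3.4 in \cite{Porta15}) with a basis given by étale maps of $\cat T_{an}$-structured topoi (Definition 2.3.1 of \cite{Lurie11e}), exactly as in the algebraic case. \textbf{Properties 1.8 (the class $\mathbf C_{an}$ of ``smooth'' maps).} Here one needs: $\mathbf C_{an}$ contains equivalences, is closed under composition and homotopy pullback, and is local on the target (all inherited from the corresponding facts for smooth morphisms in $dStein$, using that $dAn$ has homotopy pullbacks by Proposition 1.4 of \cite{Porta15} and that pullbacks of Stein spaces are Stein by Example 51(b) in \cite{Remmert94}); that a map in $\mathbf C_{an}$ which is also in $\epsilon_{an}$ ``has sections locally'' in the appropriate homotopy sense — this is the analytic smooth-surjection-has-local-sections statement, which for derived Stein spaces follows from smoothness of the underived truncation plus formal smoothness/deformation theory of \cite{Porta16}; and the compatibility between $\mathbf C_{an}$ and $\epsilon_{an}$ asserting that étale equals smooth-and-``relative dimension zero'', which is built into the definition of the topology $\tau$. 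Throughout, whenever an axiom concerns only $\pi_0$ of sheaves or only the underlying underived spaces, one reduces to the classical theory of complex analytic (Stein) spaces, where the analogues of all these statements are standard.

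I expect the main obstacle to be the ``local sections'' condition for maps in $\mathbf C_{an} \cap \epsilon_{an}$ (the analytic counterpart of: a smooth surjection of affines admits étale-local sections). In the algebraic setting this rests on standard structure theory of smooth morphisms and on the infinitesimal lifting property; in the analytic setting one must be careful because derived Stein spaces carry the extra $\cat T_{an}$-structure and the relevant deformation theory is the still-developing one of Porta--Yu \cite{Porta16}. The cleanest route is probably to factor a smooth local surjection, locally on the target, through an étale map followed by a projection from a polydisc (the analytic analogue of factoring smooth as étale-over-affine-space), use that a projection off a polydisc visibly has local sections, and then invoke that étale surjections in $\cat S_{an}$ have local sections by definition of the topology $\tau$. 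Verifying that such a local factorization exists at the derived level — rather than only on $\pi^0$ — is the technical heart; everything else is a matter of matching definitions against \cite{Pridham09a} and citing the already-established properties of $dStein$ and of analytification.
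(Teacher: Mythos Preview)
Your approach --- a direct, axiom-by-axiom verification with attention to the analytic geometry at each step --- is reasonable and would succeed, but it is considerably more elaborate than what the paper actually does. The paper's proof is essentially two lines: it observes that $(\cat A_{an}, \cat S_{an}, \mathbf C_{an}, \epsilon_{an})$ is an instance of exactly the same formal setup treated in Pridham's Proposition~1.19 (a pseudo-model subcategory inside a category of simplicial presheaves with the local injective model structure on a simplicial site, with $\epsilon$ the class of local surjections and $\mathbf C$ a locally-defined class contained in $\epsilon$). Hence the verification of Properties~1.7 goes through verbatim with $dStein$ playing the role of the test category~$\cat T$ in loc.\ cit. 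For Properties~1.8 the three sub-conditions are dispatched in one breath: one by a general result of To\"en--Vezzosi valid for any simplicial site (Proposition~3.1.4 of \cite{Toen04}), one by the very definition $\mathbf C_{an} = \text{smooth} \cap \epsilon_{an}$, and one because smoothness is a local notion.

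The practical consequence is that what you flag as the ``main obstacle'' --- the local-sections condition for smooth surjections, for which you outline an \'etale-over-polydisc factorization and invoke the derived analytic deformation theory of \cite{Porta16} --- is not where any work lies. In Pridham's axiomatics the relevant conditions are about the local model structure and the formal relationship between $\epsilon$ and $\mathbf C$, not about the structure theory of smooth analytic morphisms; nothing beyond ``smooth is local'' and ``$\mathbf C_{an} \subset \epsilon_{an}$ by definition'' is needed. Your route would still reach the goal, but it trades a purely formal transfer-of-proof argument for a substantive piece of derived analytic geometry that the theorem does not in fact demand.
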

\begin{proof}
We just follow the proof of Proposition 1.19 of \cite{Pridham09a}.

1.8 is true for any simplicial site. For (1) see Proposition 3.1.4 in \cite{Toen04}, (2) holds by definition and (3) is clear as smooth maps are defined locally.

For 1.7 we use $dStein$ in place of the category $\cat T$ in loc.\ cit.\ and then consider simplicial objects in $\cat I \coloneqq \cat S_{an} = sPr_{\tau, inj}(dStein)$, which is a category of simplicial presheaves with the local injective model structure on a simplicial site, thus all the computations involved are entirely unchanged.
\end{proof}
\begin{rk}
The last ingredient we need is Assumption 3.20 of \cite{Pridham09a}. However, we can always satisfy the condition by choosing two universes, see Remark 3.21 in loc.\ cit.
\end{rk}

Note that we may replace objects in $s\cat A_{an}$ by objects in $dStein^{\Delta^{op}}$ and talk about genuine simplicial derived Stein spaces. This is shown in Section 4.4 of \cite{Pridham09a}, the main input being the proof of Lemma 1.3.2.9 of \cite{Toen05a}, which goes through in the analytic setting.

We can now consider the category of derived analytic stacks as modelled by hypergroupoids. As all the results in \cite{Pridham09a} hold for analytic stacks we can deduce the following:
\begin{thm}
The relative category of $(n, \mathbf C_{an})$-hypergroupoids, with weak equivalences given by relative trivial hypergroupoids, is a model for the $\oo$-category of 
strongly quasi-compact $n$-geometric derived analytic Artin stacks.
\end{thm}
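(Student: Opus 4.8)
The plan is to transfer Pridham's theorem (Theorem 3.3 in \cite{Pridham11}, or equivalently the results of \cite{Pridham09a}) verbatim to the analytic setting, now that Theorem \ref{thm-context} has verified the one nontrivial input — that $(\cat A_{an}, \cat S_{an}, \mathbf C_{an}, \epsilon_{an})$ satisfies Properties 1.7 and 1.8 of \cite{Pridham09a}. First I would observe that all of \cite{Pridham09a} is written abstractly relative to a quadruple $(\cat A, \cat S, \mathbf C, \epsilon)$ satisfying those properties, together with Assumption 3.20, which we dispose of by the two-universe trick (Remark 3.21 of loc.\ cit.). In particular the geometric realization functor $|-|: s\cat A_{an} \to \cat S_{an}$ sends $(n, \mathbf C_{an})$-hypergroupoids to $n$-geometric derived analytic Artin stacks by the analytic analogue of Proposition 4.1 of \cite{Pridham09a}, using that $dStein$ with its analytic topology and smooth morphisms is the geometric context underlying $\cat S_{an}$ (Section 8 of \cite{Porta15a}).

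Next I would show the functor $|-|$ is essentially surjective onto strongly quasi-compact $n$-geometric derived analytic Artin stacks. This is exactly the analytic version of the inductive argument in \cite{Pridham09a} (around Theorem 4.10 and its corollaries): one presents such a stack by a smooth atlas from a derived Stein space, iterates to get a simplicial resolution by objects of $dStein^{\Delta^{op}}$ that one checks is an $(n,\mathbf C_{an})$-hypergroupoid, and recovers the stack as its realization. The replacement of $s\cat A_{an}$-objects by genuine simplicial derived Stein spaces is already noted in the excerpt (Section 4.4 of \cite{Pridham09a} plus Lemma 1.3.2.9 of \cite{Toen05a}), so this step is formal. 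For fully faithfulness at the level of homotopy categories, I would invoke the identity $\uHom^{\#}_{s\cat A_{an}}(X,Y) \simeq \Map_{\cat S_{an}}(|X|,|Y|)$ for $Y$ a hypergroupoid, which is Theorem 4.10 of \cite{Pridham09a} applied in our context, and then the characterization of weak equivalences as (relative) trivial hypergroupoids, again from \cite{Pridham09a}, to identify the localized relative category with the $\oo$-category of geometric analytic stacks. Finally I would note that the equivalence of relative categories with the other models of $\oo$-categories (\cite{Barwick12}) lets us phrase the conclusion as an equivalence of $\oo$-categories.

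The main obstacle, and really the only place where the analytic setting is not a black-box application of \cite{Pridham09a}, is already behind us: it was verifying Properties 1.7 and 1.8 in Theorem \ref{thm-context}, which rested on $dStein$ being closed under homotopy pullbacks — for which we used that $\pi^{0}$ commutes with limits, that $dAn$ has pullbacks (Proposition 1.4 of \cite{Porta15}), and that fibre products of Stein spaces are Stein (Example 51(b) in \cite{Remmert94}) — together with the local injective model structure $sPr_{\tau,inj}(dStein)$ behaving formally like its algebraic counterpart. Beyond that, the subtlety worth flagging in the writeup is bookkeeping: one must consistently work with the simplicial presheaf model $\sSet^{\mathfrak C[dStein]}$ rather than quasi-categories (because the hypergroupoid arguments of \cite{Pridham09a} manipulate explicit simplicial diagrams), while keeping in mind the comparison with the quasi-categorical presheaves of \cite{Porta14} via Proposition 4.2.4.4 of \cite{Lurie11a}, as recorded in the remark above. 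So the proof is short: cite Theorem \ref{thm-context} and Remark 3.21, then quote the relevant statements of \cite{Pridham09a} and \cite{Pridham11} verbatim.
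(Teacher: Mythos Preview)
Your proposal is correct and matches the paper's approach exactly: the paper gives no proof beyond the single sentence ``As all the results in \cite{Pridham09a} hold for analytic stacks we can deduce the following,'' relying on Theorem~\ref{thm-context} and the two-universe remark for Assumption~3.20. You have simply (and usefully) unpacked which results of \cite{Pridham09a} are being invoked and why they transfer.
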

Here we note that derived Artin stacks are the subcategory of derived analytic stacks $\cat S_{an}$, given by the usual representability conditions, cf.\ Definition 2.8 of \cite{Porta14}.

\begin{rk}
Note that there is a difference in definition between a geometric context in the sense of \cite{Porta14}, like $(dStein, \tau, P)$, and a HA-context in the sense of \cite{Toen05a}, which is cited in \cite{Pridham09a}.
But the definition of geometricity is the same in both contexts.
\end{rk}

\subsection{Analytification of hypergroupoids}

The derived analytification functor $an: X \mapsto X^{an}$ between structured $\oo$-topoi sends derived affine schemes 
to $\cat T_{an}$-affines, see Proposition 2.3.18 of \cite{Lurie11e}.
In particular derived affine schemes locally almost of finite presentation are sent to derived Stein spaces. (Analytification maps $\cat T_{\acute et}$-schemes to $\cat T_{an}$-schemes, and the truncation is sent to a Stein space.) We write $dAff^{lfp}$ for derived affine schemes locally almost of finite presentation. This is just the opposite of the category of homotopically finitely presented simplicial algebras.

We now want to consider a partial left adjoint to this functor and define $u = R\Spec(\cat O(-))$ on $dStein$. 
This is an affinized forgetful or algebraization functor. It is clear that $u$ sends affines to affines. 

\begin{lemma}\label{lemma-affine-adjoint}
There is a natural weak equivalence $\Map_{dStein}(T, Y^{an}) \simeq \Map_{dAff}(uT, Y)$ for a derived Stein spaces $T$ and a derived affine scheme $Y$ locally almost of finite presentation.
\end{lemma}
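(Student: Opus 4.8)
The plan is to establish the adjunction $\Map_{dStein}(T,Y^{an}) \simeq \Map_{dAff}(uT,Y)$ by reducing to the case where $Y$ is the spectrum of a free (polynomial) simplicial algebra, and then passing to an equalizer (or more precisely a homotopy limit) presentation of a general finitely presented $Y$. First I would recall that a derived affine scheme $Y$ locally of finite presentation is $R\Spec(B)$ for $B$ a homotopically finitely presented simplicial commutative $\C$-algebra, so $\Map_{dAff}(uT,Y) = \Map_{s\C\mathbf{-Alg}}(B, \cat O(T))$, where $\cat O(T)$ is the simplicial algebra of global functions on the derived Stein space $T$ (i.e.\ $\Ga(T,\cat O_T^{alg})$). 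On the analytic side, by the definition of the analytification of a structured topos together with the adjunction $(-)^{alg} \dashv (-)^{an}$ of Theorem 2.1.1 in \cite{Lurie11e}, a map $T \to Y^{an}$ of $\cat T_{an}$-structured topoi is the same as a map of $\cat T_{\acute et}$-structured topoi $T^{alg} \to Y$, which (since $Y$ is affine) is the same as a map of simplicial algebras $B \to \Ga(T^{alg}, \cat O^{alg})$. But the underlying topological space and algebra sheaf of $T^{alg}$ agree with those of $T$ as a derived analytic space, so $\Ga(T^{alg},\cat O^{alg}) = \Ga(T,\cat O_T^{alg}) = \cat O(T)$. Hence both sides are computing $\Map_{s\C\mathbf{-Alg}}(B,\cat O(T))$, and naturality in $T$ and $Y$ is inherited from naturality of the structured-topos adjunction.

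The key step to make this careful is the identification $\Map_{dStein}(T, (R\Spec B)^{an}) \simeq \Map_{s\C\mathbf{-Alg}}(B,\cat O(T))$. For $B = \C[x_1,\dots,x_k]$ (polynomial algebra), $R\Spec(B)^{an} = (\mathbb A^k)^{an}$ and a map from $T$ into affine space is by definition a $k$-tuple of global sections of $\cat O_T^{alg}$, matching $\Map(\C[x_1,\dots,x_k],\cat O(T)) = \cat O(T)^{\times k}$; this is where one uses that in Lurie's formalism $\cat T_{an}$ contains the admissible opens of $\C^n$ and the holomorphic maps between them, so that mapping into $(\mathbb A^k)^{an}$ really is testing against global functions. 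For general homotopically finitely presented $B$, write $B$ as a finite homotopy colimit (a retract of a finite iterated pushout) of free algebras; then both $\Map_{dStein}(T,(-)^{an})$ and $\Map_{dAff}(uT,-) = \Map_{s\C\mathbf{-Alg}}(-,\cat O(T))$ send homotopy colimits in $B$ to homotopy limits of spaces — the first because analytification is a right adjoint (hence $R\Spec(-)^{an}$ sends algebra colimits to limits) and mapping spaces send colimits in the source to limits, the second by the universal property of the mapping space functor — and both agree on free algebras, so they agree on all of $dAff^{lfp}$.

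The main obstacle I anticipate is the bookkeeping around the difference between $T^{alg}$ and $uT$ flagged in the remark after Theorem \ref{thm-appadjoint}: $T^{alg}$ is a $\cat T_{\acute et}$-structured topos with a genuinely Hausdorff underlying space, not an affine scheme, whereas $uT = R\Spec(\cat O(T))$ is affine. The point to get right is that even though $T^{alg}\not\simeq uT$ as structured spaces, $\Map(T^{alg},Y)\simeq \Map(uT,Y)$ \emph{when $Y$ is affine}, precisely because any map from a $\cat T_{\acute et}$-structured topos to an affine scheme factors through the global sections of the algebra sheaf — this is the structured-topos version of the statement that $R\Spec$ is right adjoint to global sections, e.g.\ Theorem 2.2.12 in \cite{Lurie11e} (or its analogue in the references there). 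Once that is isolated, the rest is formal manipulation of adjunctions and commutation of mapping spaces with (co)limits, and I would make sure to note that all the homotopy colimit presentations of $B$ involved are \emph{finite}, so no issues with compactness of $\cat O(T)$ arise.
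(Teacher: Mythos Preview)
Your proposal is correct and follows essentially the same approach as the paper. Your first paragraph unpacks exactly the paper's argument: the composite $R\Spec\circ\Gamma\circ(-)^{alg}$ is a composition of right adjoints, with left adjoint given by inclusion followed by analytification; restricting to $dAff^{lfp}$ and $dStein$ yields the claimed equivalence. Your second paragraph, checking on affine space and extending via (co)limits, is the alternative the paper records in the remark immediately following the proof. Your third paragraph, isolating that $\Map(T^{alg},Y)\simeq\Map(uT,Y)$ for affine $Y$ even though $T^{alg}\not\simeq uT$, makes explicit a point the paper leaves implicit in the phrase ``composition of right adjoints'' and is a worthwhile clarification.
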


\begin{proof}
We consider the functor $R\Spec(\Gamma(-)) \circ (-)^{alg}$ between the $\oo$-categories $Top(\cat T_{an})$ and $dAff$. This extends the functor $u$ we are considering. Moreover, as a composition of right adjoints it has a left adjoint: $dAff \to Top(\cat T_{an})$ given by inclusion followed by analytification. Now we note that $an$ sends $dAff^{lfp}$ to $dStein$ and conclude. 
\end{proof}
\begin{rk}
There is a slightly more concrete way of seeing this weak equivalence: 
The correspondence is clear if $Y = \set A^{1}$, as in this case both sides are just global functions on $T$. Then we can extend to all derived affine schemes by observing that $an$ preserves limits, and derived affine schemes are generated under limits by $\set A^{1}$.
\end{rk}

\begin{rk}
In fact we can extend $\Map(T, Y^{an}) \simeq \Map(uT,Y)$ to the case where $T$ is a derived analytic space which is a colimit of derived Stein spaces. We can thus extend to simplicial derived Stein spaces, but note that $u$ is not going to preserve hypergroupoids.
\end{rk}

As we may represent derived algebraic and analytic stacks by simplicial derived affine schemes resp.\ simplicial derived Stein spaces the following lemma is useful. We can then define a level-wise analytification $an$ for simplicial derived affines.

 \begin{lemma}\label{lemma-anpreserves}
Level-wise analytification sends (trivial) relative hypergroupoids in $dAff^{lfp}$ to (trivial) relative hypergroupoids.
\end{lemma}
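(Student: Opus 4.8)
<br>

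The plan is to verify, directly from the definitions, that analytification of simplicial objects preserves the two classes of morphisms entering the hypergroupoid formalism: relative $(n,\mathbf C_{an})$-hypergroupoids and trivial relative $(n,\mathbf C_{an})$-hypergroupoids. Since analytification here means applying the functor $an: dAff^{lfp} \to dStein$ levelwise, and both definitions are phrased entirely in terms of (a) being a Reedy fibration and (b) the homotopy (partial) matching maps lying in $\mathbf C_{an}$ (and being weak equivalences in the top range), the proof reduces to three observations about how $an$ interacts with the relevant homotopy-theoretic constructions.

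First I would recall that the homotopy matching object functor $M^{h}_{-}(X)$ is built from homotopy limits over the (opposite) simplex category, and that analytification commutes with these finite homotopy limits. On the algebraic side this is the statement that $an$ preserves homotopy pullbacks (and finite homotopy limits), which was recorded in Section \ref{sect-dan} via Lemma 2.35 of \cite{Porta14}; combined with the fact that $dStein$ is closed under homotopy pullbacks in $dAn$ (used already in constructing the pseudo-model category $\cat A_{an}$), this shows that $an\bigl(M^{h}_{\Lambda^{m}_{k}}(X)\bigr) \simeq M^{h}_{\Lambda^{m}_{k}}(an X)$ and similarly for $M^{h}_{\partial\Delta^{m}}$ and for the fibre-product targets $M^{h}(X)\times^{h}_{M^{h}(Y)} Y_{m}$. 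Hence the homotopy partial matching map of $an(X) \to an(Y)$ is exactly the analytification of the homotopy partial matching map of $X \to Y$.

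Second, I would check that $an$ sends maps in $\mathbf C_{alg}$ to maps in $\mathbf C_{an}$, and weak equivalences to weak equivalences. The latter is immediate since analytification is an adjoint of $\infty$-categories and, more concretely, is compatible with truncations and preserves equivalences of derived affines. For the former: $\mathbf C_{an}$ consists of smooth morphisms that are also local surjections; analytification sends smooth algebraic morphisms to smooth analytic morphisms (this is part of the GAGA-type package, e.g.\ it is how smoothness of the derived period map fails only because of strongness, cf.\ Section \ref{sect-comparison}), and it sends local surjections to local surjections because $\pi_{0}$ and the analytic topology are compatible with $an$ (again Section \ref{sect-dan}, and Corollary 3.4 of \cite{Porta15} that $\tau$ is subcanonical). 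The Reedy fibration condition is handled by noting that $an$ applied levelwise to a Reedy fibration of simplicial affines yields a Reedy fibration of simplicial Stein spaces, using that the matching maps are again analytifications of the algebraic matching maps by the first point, together with the fact that $an$ lands in the relevant fibrations of $\cat S_{an}$.

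Putting these together: if $X \to Y$ is a relative $(n,\mathbf C_{alg})$-hypergroupoid then each homotopy partial matching map of $an(X) \to an(Y)$ is the analytification of one for $X \to Y$, hence lies in $\mathbf C_{an}$, and is a weak equivalence for $m>n$; so $an(X)\to an(Y)$ is a relative $(n,\mathbf C_{an})$-hypergroupoid, and the identical argument with $M^{h}_{\partial\Delta^{m}}$ handles the trivial case. I expect the main obstacle to be bookkeeping rather than conceptual: one must be careful that ``analytification applied levelwise'' genuinely computes the homotopy matching objects correctly — i.e.\ that no fibrant/cofibrant replacement issue obstructs the commutation $an \circ M^{h}_{-} \simeq M^{h}_{-} \circ an$ — and that the source objects are indeed in $dAff^{lfp}$ so that $an$ is defined on them (which holds because hypergroupoids presenting geometric stacks are levelwise built from affines locally of finite presentation, the stacks in question being strongly quasi-compact and locally of finite presentation). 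Once that is pinned down the statement follows formally.
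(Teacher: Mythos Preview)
Your proposal is correct and follows essentially the same approach as the paper's proof. The paper is simply terser: it invokes directly that $an$ is a right adjoint (hence preserves homotopy limits, giving the matching-object commutation you spell out), that it sends smooth maps to smooth maps, and that it preserves local surjections; you have unpacked these same three ingredients with more explicit citations and added the bookkeeping remarks about Reedy fibrations and the $dAff^{lfp}$ hypothesis.
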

\begin{proof}
By Lemma \ref{lemma-affine-adjoint} $an$ preserves homotopy limits in $dAff^{lfp}$. Moreover it sends smooth maps to smooth maps and preserves local surjections.  It follows that it preserves relative hypergroupoids.
\end{proof}
Moreover, level-wise analytification agrees with the Lurie-Porta definition that we quoted in Section \ref{sect-dan}. This also shows it is well-defined under weak equivalences of hypergroupoids.
\begin{lemma}\label{lemma-anwell}
Let $Y$ be a hypergroupoid in $dAff^{lfp}$. Then $|an(Y)|_{\cat S_{an}} = (|Y|_{\cat S_{alg}})^{an}$.
\end{lemma}
\begin{proof}
 This just says that analytification commutes with geometric realization. This is clear as analytification of Artin stacks is defined as a left Kan extension.
 \end{proof}
 
However, we note that there is no well-defined algebraization functor from analytic hypergroupoids to algebraic hypergroupoids, contrary to claims in previous versions of this paper. For a systematic way to get around this difficulty, see \cite{HolsteinF}. 

 We can now state a way of producing maps into analytifications.
 \begin{lemma}\label{lemma-simplicial-adjoint}
For a simplicial derived Stein space $T_{\bullet}$ and a  derived algebraic stack $Y$ locally almost of finite presentation with simplicial presentation $Y_{\bullet}$
there is a natural map $\Map_{s\cat A_{alg}}(uT_{\bullet}, Y_{\bullet}) \to \Map_{s\cat A_{an}}(T_{\bullet}, an(Y_{\bullet})) \simeq \Map_{dAnSt}(|T_{\bullet}|, Y^{an})$.
\end{lemma}
\begin{proof}
The first arrow is immediate from Lemma \ref{lemma-affine-adjoint} and applying the realization functor. 
The second map comes from Lemma \ref{lemma-anwell}. 
\end{proof}
\begin{rk}
The map in the lemma is not well-defined on the level of derived stacks and it appears that this shortcoming cannot be overcome. This means that whenever we want to apply this lemma for some cover $T_{\bullet}$ of a derived analytic stack, we need to check by hand that the map we construct does not depend on our choice of $T_{\bullet}$.
\end{rk}

\bibliography{../biblibrary2}

\end{document}